\documentclass[11pt]{article}
\usepackage[margin=1.2in]{geometry}

\usepackage[T1]{fontenc}
\usepackage{lmodern}
\usepackage{amsmath,amssymb,amsthm,mathtools}
\usepackage{graphicx}
\usepackage{microtype}
\usepackage{hyperref}

\newtheorem{theorem}{Theorem}[section]
\newtheorem{proposition}[theorem]{Proposition}
\newtheorem{lemma}[theorem]{Lemma}
\newtheorem{corollary}[theorem]{Corollary}

\theoremstyle{definition}
\newtheorem{definition}[theorem]{Definition}

\theoremstyle{remark}
\newtheorem{remark}[theorem]{Remark}

\theoremstyle{example}
\newtheorem{example}[theorem]{Example}

\numberwithin{equation}{section}

\let\Re\relax
\let\Im\relax
\DeclareMathOperator{\Re}{Re}
\DeclareMathOperator{\Im}{Im}
\DeclareMathOperator{\Nrd}{Nrd}
\DeclareMathOperator{\Stab}{Stab}
\DeclareMathOperator{\diag}{diag}
\DeclareMathOperator{\End}{End}

\newcommand{\Gtwo}{{\mathrm{G}_2}}
\DeclareMathOperator{\SU}{SU}
\DeclareMathOperator{\Sp}{Sp}
\newcommand{\su}{\mathfrak{su}}

\newcommand{\avec}{\mathbf{a}}
\newcommand{\svec}{\mathbf{s}}

\newcommand{\inputsection}[1]{%
  \IfFileExists{sections/#1.tex}{\input{sections/#1}}{\typeout{Not yet present: sections/#1.tex}}}

\title{The two-step spectral locus of the octonionic structure operator}
\author{Sergey Grigorian\\School of Mathematical \& Statistical Sciences \\University of Texas Rio Grande Valley\\Edinburg, TX 78539}
\date{October 14, 2026}
\begin{document}
\maketitle

\begin{abstract}
We continue the study of the octonionic spectrum of the
octonionic structure operator $h$ on $W=\mathbb O\otimes_{\mathbb R}V$, where
$V$ is the seven-dimensional representation of $\Gtwo$. The real spectrum and the octonionic one-step spectrum, given by the equation $h(w)=w\lambda$ for real or octonion $\lambda$, have been determined in earlier work. In this paper, we study the two-step
equation $h(w)=(w\lambda_1)\lambda_2$. Due to non-associativity, this does not in general reduce to the one-step problem. An adapted quaternionic frame represents successive right multiplication
by an operator with four real coefficients. The $\SU(2)$ subgroup fixing the frame splits $W$ into
invariant sectors of dimensions $16$, $12$ and $28$. We factor the determinant
of the resulting 4-parameter operator pencil and characterize the pairs for which a nonzero
solution exists. The singular locus of this pencil in $\mathbb R^4$ is the union of an
explicit algebraic set and a round $3$-sphere. The $12$-dimensional sector
carries the one-step problem at the reversed product $\lambda_2\lambda_1$.
The $28$-dimensional sector contributes no additional real singular points;
the fixed sector produces solutions even when neither product lies in the
one-step right spectrum.
\end{abstract}

\section{Introduction}
\label{sec:introduction}

The guiding question in this paper is the following. Given octonions $\lambda_1,\lambda_2$, when does the equation
\begin{equation} \label{eq:def}
 h(w)=(w\lambda_1)\lambda_2,\qquad w\ne0,
\end{equation}
have a solution? Here $h$ is the canonical structure operator on
$W=\mathbb O\otimes_{\mathbb R}V$, where $V$ is the standard real
$7$-dimensional $\Gtwo$-module, and right multiplication acts on each
octonion coefficient of $w\in W$. In this paper, we study this two-step problem, posed in
\cite[p.~53]{grigorian-2026-octonionic}. The operator $h$ is defined naturally by octonion multiplication and is both $G_2$-equivariant and octonion-Hermitian. Among operators of left octonionic multiplication type, these conditions give a real two-parameter family \cite[Remark~3.4, (3.6), p.~13]{grigorian-2026-octonionic}. Its nonzero members, modulo positive scale, form an $S^1$-family. For non-scalar members, the real and one-step spectra and the full two-step coefficient loci are related by invertible affine changes of coordinates, as explained in Remark~\ref{rem:family}; the scalar members are degenerate.

With the associator convention $[a,b,c]=a(bc)-(ab)c$,
\begin{equation*} 
 (w\lambda_1)\lambda_2=w(\lambda_1\lambda_2)-[w,\lambda_1,\lambda_2].
\end{equation*}
Over an associative coefficient algebra the equation collapses to the
one-step problem with right eigenvalue $\lambda_1\lambda_2$.

For octonions, we ask how the associator changes the spectral locus.
By Artin's Theorem, $\lambda_1$ and $\lambda_2$ lie in an associative quaternionic
subalgebra, however, the octonion coefficients of $w$ need not lie in that subalgebra
\cite[Chapter~6 and Sections~8.1--8.5]{conway-2003-quaternions}.
On the quaternionic subalgebra, successive right multiplication agrees with
multiplication by $\lambda_1\lambda_2$; on its orthogonal complement, it
agrees with multiplication by the reversed product $\lambda_2\lambda_1$
(Proposition~\ref{prop:halves}). The operator $h$ can couple these subspaces,
so this description does not reduce the problem to two independent
one-step equations.

Equation~\eqref{eq:def} reduces to the problem of finding the kernel of the operator $h-R_{\lambda_2}R_{\lambda_1}$, where $R$ denotes the octonion right-multiplication operator on $W$. For every pair of octonions $(\lambda_1,\lambda_2)$ this operator, in an adapted orthonormal pair of imaginary units $u,v$, is identified with a member $P_{\avec}$ of the four-coefficient pencil
\begin{equation*}
 P_{\avec}=h-a_0I-a_1R_u-a_2R_v-a_3R_vR_u,\qquad\text{where}\qquad
 R_vR_u:w\longmapsto(wu)v,
\end{equation*}
at a point $\avec=(a_0,a_1,a_2,a_3)\in\mathbb R^4$
(Proposition~\ref{prop:general-reduction}), and every coefficient point $\avec$ is
realized by a pair $(\lambda_1,\lambda_2)$ (Proposition~\ref{prop:realizability}).

Write
$x=a_1^2+a_2^2$, $y=a_3^2$ and $N=a_0^2+x+y$. Then, for a pair,
$a_0=\Re(\lambda_1\lambda_2)$,  $N=|\lambda_1|^2|\lambda_2|^2$, and $y$ is the Gram determinant of the imaginary parts. The reduced point $\avec$ depends on the frame, while $a_0$, $N$ and $y$ are
frame-independent. Our results therefore apply to every pair at its reduced point.

The stabilizer of the quaternionic frame $(u,v,uv)$ is
$K\cong\SU(2)$. Its action decomposes $W$ into three invariant sectors $W_{\mathbf1}\cong{\mathbf1}^{\oplus16}, W_{\mathbf3}={\mathbf3}^{\oplus4}, W_{\mathbf4}={\mathbf4}^{\oplus7}$, 
of real dimensions $16$, $12$ and $28$, respectively (Theorem~\ref{thm:sectors}), and $\mathbf1,\mathbf3,\mathbf4$ refer to real irreducible representations of $\SU(2)$.

Theorem~\ref{thm:two-step-determinant} gives the polynomial identity
\begin{equation*}
 \det\nolimits_{56}P_{\avec}
   =N^6Q_{14}(a_0,x,y)(N+4a_0)^3Q_{12}(a_0,x,y)^2,
\end{equation*}
where $Q_{14},Q_{12}$ are explicit polynomials defined in
Sections~\ref{subsec:fixed-sector-block} and~\ref{subsec:four-block}.
Theorem~\ref{thm:real-locus} identifies the real singular locus as $Z\cup S$,
where
\begin{equation*}
\begin{aligned}
 Z&=\{\avec\in\mathbb R^4:Q_{14}(a_0,a_1^2+a_2^2,a_3^2)=0\},\\
 S&=\{\avec\in\mathbb R^4:(a_0+2)^2+a_1^2+a_2^2+a_3^2=4\}.
\end{aligned}
\end{equation*}
Their form for every pair is given in Corollary~\ref{cor:general-pairs}.
The sphere arises from the reversed-product sector $W_{\mathbf3}$. The quaternionic-type
sector $W_{\mathbf4}$ contributes no additional real singular points, while the fixed
sector $W_{\mathbf1}$ supplies the locus $Z$.

The real and one-step spectra were determined in
\cite{grigorian-2026-octonionic}: the former is finite, and the latter is described in
each complex slice by a quartic and a circle
(Section~\ref{sec:one-step-spectrum}, Figure~\ref{fig:level1-slice-locus}).
To the best of the author's knowledge, the sector determinant formulas and
the resulting characterization of the real two-step locus are new.

Sections~\ref{sec:one-step-spectrum} and~\ref{sec:two-step-pencil} recall
the lower levels and set up the pencil and sectors.
Section~\ref{sec:determinant-locus} proves
Theorems~\ref{thm:two-step-determinant} and~\ref{thm:real-locus}, and
Section~\ref{sec:conclusion} states the remaining questions.
The quaternionic-type coordinate and matrix calculation is given in
Section~\ref{subsec:four-block}. Appendix~\ref{app:su2-bases} supplies
the algebraic rules and branching, and
Appendix~\ref{app:fixed-sector-calculation} gives the fixed-sector Schur
reduction.

\subsection*{Acknowledgments}

Codex and Claude Code were used to organize and verify supporting research materials, to assist with Python scripts that were used for computations, and for assistance with typesetting. ChatGPT was used for mathematical brainstorming and editorial assistance with the exposition. The main ideas are the author's own and all mathematical results have been verified by the author, who takes full responsibility for the content.

\section{Background and the one-step spectrum}
\label{sec:one-step-spectrum}
Let us first introduce the conventions and the notation.
\begin{definition}[Conventions]
\label{def:octonion-conventions}

We use the conventions of \cite[\S2.1, (2.1)--(2.3), p.~6; Remark~5.1, p.~23; (5.4), p.~25]{grigorian-2026-octonionic}.
  Let $\mathbb O$ be the octonions, with conjugation $p\mapsto\bar p$ and inner product $\langle p,r\rangle=\Re(p\bar r)$, and let $g$ be the restriction of this inner product to $\Im\mathbb O$. For $a,b\in\Im\mathbb O$, the cross product $a\times b$ is defined by $ab=-g(a,b)1+a\times b$. The associator $[a,b,c]$ of $a,b,c\in\mathbb O$ is defined by
  \begin{equation}
    \label{eq:associator-rearranged}
    (ab)c=a(bc)-[a,b,c].
  \end{equation}
  Finally, $\Gtwo=\operatorname{Aut}(\mathbb O)$, and for a unit $\hat n\in\Im\mathbb O$ the complex structure $J_{\hat n}$ on $\hat n^\perp\subset\Im\mathbb O$ is $J_{\hat n}(p)=\hat n\times p$.
\end{definition}

\begin{definition}
\label{def:space-w}
Following \cite[\S2.3]{grigorian-2026-octonionic}, fix an oriented Euclidean
$7$-space $V$ and a $\Gtwo$-equivariant isometry
$V\cong\widehat V:=\Im\mathbb O$.
We distinguish corresponding vectors by writing $v\in V$ and
$\hat v\in\widehat V$. Set $W:=\mathbb O\otimes_{\mathbb R}V$,
a real vector space of dimension $56$.

The product metric and diagonal $\Gtwo$-action on $W$ are
\[
\begin{aligned}
  \langle p\otimes v,r\otimes v'\rangle_W
    &=\langle p,r\rangle_{\mathbb O}\,g(v,v'),\\
  \gamma\cdot(p\otimes v)&=\gamma(p)\otimes\gamma(v),
\end{aligned}
\]
where $\Gtwo$ acts on $V$ through the fixed isometry.

All octonion products are formed in the first tensor factor.
For $a\in\mathbb O$, define
\[
  L_a(p\otimes v)=(ap)\otimes v
  \qquad\text{and}\qquad
  R_a(p\otimes v)=(pa)\otimes v.
\]
We write $wa:=R_aw$ for $w\in W$. Thus successive right multiplication
is formed coefficient by coefficient, in the order
$(w\lambda_1)\lambda_2=R_{\lambda_2}R_{\lambda_1}w$.
\end{definition}

Fix corresponding orthonormal bases $\{e_i\}$ of $V$ and
$\{\hat e_i\}$ of $\widehat V$. Write
$w=\sum_j\hat w_j\otimes e_j$, with $\hat w_j\in\mathbb O$;
identity~\eqref{eq:associator-rearranged} applies to each coefficient.
In the expansion $\hat w_j=w_{0j}1+\sum_iw_{ij}\hat e_i$, the first index
of $w_{ij}$ belongs to the octonion factor and the second to $V$.

The imaginary-coefficient part $\widehat V\otimes V\subset W$ is therefore
identified with $V\otimes V$, with the octonion index first.
This identifies tensor subspaces such as $\Lambda^2_7V$ and $S^2_0V$
with subspaces of $W$. As a $\Gtwo$-module,
\[
W\cong\mathbf1\oplus\mathbf7\oplus\mathbf7
       \oplus\mathbf{14}\oplus\mathbf{27},
\]
where bold numerals denote irreducible representations of $\Gtwo$ of the indicated dimensions. The summands $\mathbf1$, $\Lambda^2_7V$,
$\mathbf{14}$, and $\mathbf{27}$ lie in
$\widehat V\otimes V\cong V\otimes V
=S^2V\oplus\Lambda^2V$ and have imaginary
$\mathbb O$-coefficients, whereas $1\otimes V$
has real $\mathbb O$-coefficients.

\begin{definition}
\label{def:operator-h}
 With the bases fixed above, let $w=\sum_j\hat w_j\otimes e_j$ for $\hat w_j\in\mathbb O$, and define, as in \cite[\S3.1, Proposition~3.3 and Remark~3.4]{grigorian-2026-octonionic}, $h\in\End_{\mathbb R}(W)$ by
  \begin{equation}
    \label{eq:h-definition}
    h(w)=\sum_{i,j}\bigl((\hat e_i\hat e_j)\,\hat w_j\bigr)\otimes e_i.
  \end{equation}
  In \eqref{eq:h-definition} the product $h_{ij}:=\hat e_i\hat e_j$ is formed first and then multiplies $\hat w_j$ on the left; the second index of $h_{ij}$ is contracted against the $V$-index of $w$, and the first is the output $V$-index.
\end{definition}

The operator $h$ does not depend on the choice of corresponding orthonormal bases \cite[\S3.1]{grigorian-2026-octonionic}. The octonion product also determines the adjoint and equivariance properties of $h$.


\begin{proposition}[{\cite[Proposition~3.2]{grigorian-2026-octonionic}}]
\label{prop:equivariance}
  The operator $h$ is self-adjoint with respect to $\langle\cdot,\cdot\rangle_W$ and $\Gtwo$-equivariant.
\end{proposition}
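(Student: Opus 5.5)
The plan is to verify both properties directly from the formula \eqref{eq:h-definition}, using only standard octonion identities.

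\emph{Self-adjointness.} By bilinearity it suffices to compare $\langle h(p\otimes e_j), r\otimes e_i\rangle_W$ with $\langle p\otimes e_j, h(r\otimes e_i)\rangle_W$ for $p,r\in\mathbb O$. From \eqref{eq:h-definition} and orthonormality of $\{e_i\}$,
\[
\langle h(p\otimes e_j), r\otimes e_i\rangle_W=\langle (\hat e_i\hat e_j)p,\, r\rangle_{\mathbb O}
\quad\text{and}\quad
\langle p\otimes e_j, h(r\otimes e_i)\rangle_W=\langle p,\,(\hat e_j\hat e_i)r\rangle_{\mathbb O}.
\]
Left multiplication by any octonion $a$ has adjoint $L_{\bar a}$, that is, $\langle ap,r\rangle=\langle p,\bar a r\rangle$. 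This follows from polarizing $|ap|=|a||p|$, or from the alternative law, and it holds even though $\mathbb O$ is nonassociative, because only one multiplication operator is involved. Since $\hat e_i,\hat e_j$ are imaginary, $\overline{\hat e_i\hat e_j}=\bar{\hat e}_j\bar{\hat e}_i=\hat e_j\hat e_i$. The two quantities therefore agree, and $h$ is self-adjoint.

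\emph{Equivariance.} Write $h=\sum_{i,j}L_{\hat e_i\hat e_j}\otimes E_{ij}$, where $E_{ij}e_k=\delta_{jk}e_i$. Since $\{e_i\}$ is orthonormal, $E_{ij}=e_i\otimes e_j^\flat$, so $h$ corresponds to the tensor $T=\sum_{i,j}(\hat e_i\hat e_j)\otimes e_i\otimes e_j\in\mathbb O\otimes V\otimes V^*$, acting through $L$ on the first factor. Conjugating $h$ by $\gamma\in\Gtwo$ replaces $T$ by $\sum\gamma(\hat e_i\hat e_j)\otimes\gamma e_i\otimes\gamma e_j$. This uses three facts: $\gamma$ is an algebra automorphism, so $\gamma L_a\gamma^{-1}=L_{\gamma a}$; $\gamma$ acts orthogonally on $V$; and the isometry $V\cong\widehat V$ is equivariant. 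Because $\gamma$ is an automorphism, $\gamma(\hat e_i\hat e_j)=\gamma(\hat e_i)\gamma(\hat e_j)$, so the conjugated tensor is the tensor built from the orthonormal basis $\{\gamma e_i\}$ and the corresponding basis $\{\gamma\hat e_i\}$.

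It remains to show that $T$ does not depend on the choice of corresponding orthonormal bases. The tensor is $\sum_{i,j}\mu(\hat e_i,\hat e_j)\otimes e_i\otimes e_j$, where $\mu$ is octonion multiplication. This is the image of the bilinear map $\mu\circ(\iota\otimes\iota)$ under the metric identification of bilinear forms with $V\otimes V$, where $\iota:V\to\widehat V$ is the fixed isometry. The expression $\sum_i e_i\otimes e_i$ is invariant under orthogonal changes of basis, so $T$ is basis-independent. This is the independence already cited from \cite[\S3.1]{grigorian-2026-octonionic}. Hence $\gamma h\gamma^{-1}=h$.

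The main obstacle is bookkeeping rather than any deep step. One must track carefully which index is contracted and the order of multiplication $(\hat e_i\hat e_j)\hat w_j$. In particular, the adjoint step has to transpose the indices exactly, so that $\hat e_i\hat e_j$ becomes $\hat e_j\hat e_i$, which is precisely its conjugate.
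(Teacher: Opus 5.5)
Your proof is correct. Self-adjointness follows from $\langle ap,r\rangle=\langle p,\bar a r\rangle$ together with $\overline{\hat e_i\hat e_j}=\hat e_j\hat e_i$. For the adjoint identity, the polarization you need is the one in the left factor: $\langle ap,bq\rangle+\langle bp,aq\rangle=2\langle a,b\rangle\langle p,q\rangle$ with $b=1$. Equivariance follows from $\gamma L_a\gamma^{-1}=L_{\gamma a}$ together with the basis-independence of $\sum_{i,j}(\hat e_i\hat e_j)\otimes e_i\otimes e_j$.

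The paper does not reprove this statement; it imports it from \cite[Proposition~3.2]{grigorian-2026-octonionic} along with the basis-independence remark. Your argument is the direct verification behind what the paper invokes: the coefficients $h_{ij}=\hat e_i\hat e_j=-g_{ij}1+\varphi_{ijk}\hat e_k$ form an octonion-Hermitian matrix built from the $\Gtwo$-invariant tensors $g$ and $\varphi$ (Remark~\ref{rem:family}).
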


\begin{definition}
    \label{def:levels}
We call $h(w)=\mu w$ ($\mu\in\mathbb R$) the \emph{level-0} problem,
$h(w)=w\lambda$ ($\lambda\in\mathbb O$) the \emph{level-1} or \emph{one-step}
problem, and $h(w)=(w\lambda_1)\lambda_2$ ($\lambda_1,\lambda_2\in\mathbb O$)
the \emph{level-2} or \emph{two-step} problem. Hence, the \emph{level-1 right spectrum} of $h$ is $\operatorname{Spec}_{\mathrm{right}}(h)=\{\lambda\in\mathbb O:\ker(h-R_\lambda)\neq0\}$. An octonion $\lambda$ is a \emph{right eigenvalue}, with eigenvector $w\neq0$, when $h(w)=w\lambda$.
\end{definition}

\begin{proposition}[{Level-$0$ real spectrum \cite[Lemma~4.2 and Theorem~4.3]{grigorian-2026-octonionic}}]
\label{prop:real-spectrum}

The action of $h$ on the isotypic components of $W$
is summarized in the following table:
\[
\begin{array}{c|l|c|c}
\Gtwo\text{-type}
  & \text{Realization in }W
  & \text{Eigenvalue(s) of }h
  & \text{Multiplicity}\\
\hline
\mathbf1
  & \mathbb R\sum_i\hat e_i\otimes e_i
  & -7 & 1\\
\mathbf7\oplus\mathbf7
  & (1\otimes V)\oplus\Lambda^2_7V
  & r_\pm=\dfrac{1\pm\sqrt{33}}2
  & 7\text{ each}\\
\mathbf{14}
  & \Lambda^2_{14}V
  & -4 & 14\\
\mathbf{27}
  & S^2_0V
  & 0 & 27
\end{array}
\]
On the $\mathbf7\oplus\mathbf7$ component, $h$ acts
through a $2\times2$ matrix with characteristic
polynomial $\mu^2-\mu-8$. Consequently,
\[
\operatorname{Spec}_{\mathbb R}(h)
=\{-7,-4,0,r_-,r_+\},
\]
with multiplicities $1,14,27,7,7$, respectively.
\end{proposition}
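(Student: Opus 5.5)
By Schur's lemma and the $\Gtwo$-equivariance of $h$ (Proposition~\ref{prop:equivariance}), $h$ acts as a real scalar on each isotypic component of multiplicity one. On the $\mathbf7\oplus\mathbf7$ component it acts through a $2\times2$ real matrix intertwining the two copies. So the plan is to compute $h$ on one explicit vector (or one explicit map) from each component listed in the table, using the definition~\eqref{eq:h-definition}. The octonion identities needed are $\hat e_i\hat e_j=-\delta_{ij}+\hat e_i\times\hat e_j$ and the standard contraction identities for the $\Gtwo$ $3$-form $\varphi_{ijk}=g(\hat e_i\times\hat e_j,\hat e_k)$, in particular
\[
\varphi_{ijk}\varphi_{ijl}=6\delta_{kl},
\qquad
\varphi_{ijk}\varphi_{abk}=\delta_{ia}\delta_{jb}-\delta_{ib}\delta_{ja}+\psi_{ijab}.
\]
I would also use the eigenvalue characterisation of $\Lambda^2_7$ and $\Lambda^2_{14}$: the operator $\omega\mapsto\psi(\omega)$ has eigenvalues $2$ and $-1$ on these.

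The computations go component by component.

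\emph{Trivial summand.} Take $w=\sum_j\hat e_j\otimes e_j$. Then
\[
h(w)=\sum_{i,j}(\hat e_i\hat e_j)\hat e_j\otimes e_i=-\sum_{i,j}\hat e_i\otimes e_i=-7w,
\]
using alternativity in the form $(ab)b=a(bb)$ and $\hat e_j^2=-1$.

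\emph{$S^2_0V$ and $\Lambda^2V$.} For $w=\sum_j\hat w_j\otimes e_j$ with $\hat w_j=\sum_k w_{kj}\hat e_k$ imaginary, expand
\[
(\hat e_i\hat e_j)\hat e_k=-\delta_{ij}\hat e_k+(\hat e_i\times\hat e_j)\hat e_k .
\]
Contract against $w_{kj}$ and split $(\hat e_i\times\hat e_j)\hat e_k$ into its real part $-\varphi_{ijk}$ and its imaginary part $\varphi_{ijm}\varphi_{mkn}\hat e_n$. The real part pairs $\varphi$ with $w$, so it vanishes on symmetric $w$ and gives the $1\otimes V$ component for $w\in\Lambda^2_7$. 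The imaginary part reduces via the contraction identity to a combination of $w$, $w^{T}$, $\operatorname{tr}(w)\,\mathrm{id}$ and $\psi\cdot w$. The first term $-\delta_{ij}\hat e_k$ contributes $-\hat w_i$. Evaluating these on $S^2_0$, where $w^T=w$, $\operatorname{tr}w=0$ and $\psi\cdot w=0$, should give eigenvalue $0$. On $\Lambda^2_{14}$ the $\psi$-eigenvalue $-1$ should give $-4$.

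\emph{$1\otimes V$.} For $w=\sum_j c_j\,1\otimes e_j$ we get $h(w)=\sum_{i,j}c_j\,\hat e_i\hat e_j\otimes e_i$. Its real part is $-\sum_i c_i\,1\otimes e_i$. Its imaginary part is $\sum_{i,j,k} c_j\varphi_{ijk}\hat e_k\otimes e_i$, a multiple of the $\Lambda^2_7$ element $\iota(c)$ determined by $c$.

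\emph{The $2\times2$ block.} Normalise the two copies by $c\mapsto 1\otimes c$ and $c\mapsto\iota(c)=c\lrcorner\varphi$. The steps above then give the matrix of $h$ in the form $\begin{pmatrix}-1&\alpha\\ \beta&\delta\end{pmatrix}$. I would fix $\alpha,\beta,\delta$ via $\varphi\varphi$ contractions; the $\psi$-eigenvalue $2$ on $\Lambda^2_7$ enters $\delta$. Self-adjointness forces the off-diagonal entries to be consistent with the norms $|1\otimes c|^2=|c|^2$ and $|\iota(c)|^2=6|c|^2$. The target is trace $1$ and determinant $-8$, which yields $\mu^2-\mu-8$ and eigenvalues $r_\pm=(1\pm\sqrt{33})/2$.

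Finally, I would check the multiplicities by dimension: $1+7+7+14+27=56$. The real spectrum then follows as stated.

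The main obstacle is the bookkeeping in the imaginary-coefficient computation, specifically getting the signs and index orders right in $(\hat e_i\hat e_j)\hat e_k$, and matching the sign convention of $\psi$ with the paper's orientation. As a cross-check I would verify the trace identity
\[
\operatorname{tr}h=-7-4\cdot14+0+7(r_++r_-)=-56,
\]
which should also equal $\sum_{i,j}\langle(\hat e_i\hat e_j)\hat e_k,\hat e_k\rangle$ summed appropriately, that is, $-\sum_{k}\langle\hat e_k,\hat e_k\rangle$ over the $i=j$ terms, giving $-56$. This fixes any sign ambiguity in $\delta$.
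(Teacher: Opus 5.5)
The paper gives no proof of this proposition; it quotes it from \cite{grigorian-2026-octonionic}. Your plan is a correct, self-contained proof, and carrying it out gives exactly the stated data. Write $h(w)=\sum_i\bigl(c'_i+\sum_n w'_{ni}\hat e_n\bigr)\otimes e_i$ for imaginary-coefficient $w$. Your expansion then yields
\[
w'_{ni}=-w_{ni}+w_{in}-\operatorname{tr}(w)\,\delta_{ni}+\psi_{nikj}w_{kj},\qquad c'_i=-\varphi_{ijk}w_{kj}.
\]
This gives $h=0$ on $S^2_0V$. On $\Lambda^2_{14}V$ the first three terms contribute $-2w$ and the $\psi$-term contributes another $-2w$, so $h=-4$ there. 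With $\iota(c)=\sum_{i,j,k}c_j\varphi_{ijk}\hat e_k\otimes e_i=\sum_i(\hat e_i\times\hat c)\otimes e_i$ one gets
\[
h(1\otimes c)=-(1\otimes c)+\iota(c),\qquad h(\iota(c))=6\,(1\otimes c)+2\,\iota(c).
\]
Thus $(\alpha,\beta,\delta)=(6,1,2)$, with trace $1$ and determinant $-8$. This is consistent with $|\iota(c)|^2=6|c|^2$ and self-adjointness.

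One claim in your plan is wrong, though the proof does not rely on it: the trace check cannot fix the sign of the $\psi$-term. The map $w\mapsto\psi_{nikj}w_{kj}$ has eigenvalues $4$ on $\Lambda^2_7V$, $-2$ on $\Lambda^2_{14}V$ and $0$ on $S^2V$. Its trace is therefore $7\cdot4-14\cdot2=0$, and flipping its sign leaves $\operatorname{tr}h$ unchanged. With the opposite sign you would get $0$ on $\Lambda^2_{14}V$ and the block $\begin{pmatrix}-1&6\\1&-6\end{pmatrix}$, with eigenvalues $0,-7$. That still gives $-7+14\cdot0+7\cdot(-7)=-56$.

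The sign is instead forced by the contraction identity itself, with no orientation convention involved. Contract $\varphi_{ijm}\varphi_{abm}=\delta_{ia}\delta_{jb}-\delta_{ib}\delta_{ja}+\psi_{ijab}$ against $\omega_{ab}=\varphi_{abk}c_k$ and use $\varphi_{abm}\varphi_{abk}=6\delta_{mk}$; this gives $\psi_{ijab}\omega_{ab}=4\omega_{ij}$. Contracting against $\omega\in\Lambda^2_{14}V$, where $\varphi_{abm}\omega_{ab}=0$, gives $\psi_{ijab}\omega_{ab}=-2\omega_{ij}$. So derive the eigenvalues $2,-1$ from the same $\psi$ you expand with. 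The only other input is total antisymmetry of $\psi$, which gives $\psi_{ijkn}=\psi_{nikj}$ and moves the contracted pair into the last two slots.
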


For a unit $n\in V$ with image $\hat n$, the \emph{slice} is $\mathbb C_{\hat n}=\mathbb R\oplus\mathbb R\hat n$, in which each $\lambda$ is uniquely $\mu+Q\hat n$ with $\mu,Q\in\mathbb R$, and the \emph{slice pencil} is $H_{n,Q}=h-Q\,R_{\hat n}$. We take $Q$ to be \emph{signed}: $(\mu,Q,\hat n)$ and $(\mu,-Q,-\hat n)$ represent the same $\lambda$, and every slice locus is even in $Q$. Since $R_\lambda$ is real-linear in $\lambda$ and $\mu$ is central,
$w\lambda=\mu w+QR_{\hat n}w$, so
\[
  h(w)=w\lambda\quad\text{if and only if}\quad H_{n,Q}w=\mu w.
\]

Let $\SU(3)_n=\Stab_{\Gtwo}(n)$. It follows immediately \cite[Proposition~5.4]{grigorian-2026-octonionic} that $R_{\hat n}$, and hence the slice pencil $H_{n,Q}$, is equivariant under $\SU(3)_n \cong \SU(3)$. The pencil is therefore block-diagonal for the $\SU(3)$-decomposition of $W$. Here the bold numerals $\mathbf1,\mathbf6,\mathbf8,\mathbf{12}$ name types of $\SU(3)_n$-modules that correspond to \emph{real} irreducible representations of $\SU(3)$ of the specified dimension.


The four real module types have the following descriptions:
\[
\begin{array}{c|l}
\text{Type} & \text{Description}\\\hline
\mathbf1 & \mathbb Rn\\
\mathbf6 & n^\perp,\quad (n^\perp)_{\mathbb C}\cong\mathbf3\oplus\bar{\mathbf3}\\
\mathbf8 & \su(n^\perp,J_n)\\
\mathbf{12} & \text{Real module with complexification }S^2(\mathbf3)\oplus S^2(\bar{\mathbf3})
\end{array}
\]
Here $\mathbf3$ is the standard complex $\SU(3)_n$-module and
$\bar{\mathbf3}$ its conjugate. The $\Gtwo$ summands restrict as
\[
\begin{aligned}
  \mathbf7&\cong\mathbf1\oplus\mathbf6,\\
  \mathbf{14}&\cong\mathbf8\oplus\mathbf6,\\
  \mathbf{27}&\cong\mathbf1\oplus\mathbf6\oplus\mathbf8\oplus\mathbf{12}.
\end{aligned}
\]
Consequently,
  \begin{equation}
    \label{eq:su3-decomposition}
    W\cong\mathbf1^{\oplus4}\oplus\mathbf6^{\oplus4}\oplus\mathbf8^{\oplus2}\oplus\mathbf{12},
  \end{equation}
  where the two copies of $\mathbf8$ come from $\mathbf{14}$ and $\mathbf{27}$. Each $H_{n,Q}$ preserves every isotypic summand.

The four types are irreducible over $\mathbb R$
\cite[Section~2, pp.~58--60]{moroianu-2008-deformations}.
Their real or complex types are given in
\cite[Section~26.3, Proposition~26.24]{fulton-1991-representation}.
The complex irreducible
modules underlying $\mathbf6$, $\mathbf8$ and $\mathbf{12}$ are described in
\cite[Section~6.1; proof of Proposition~6.17; p.~147 and Section~6.9,
Exercise~8; Section~6.5]{hall-2015-lie}.

\begin{proposition}[The one-step spectrum \cite{grigorian-2026-octonionic}]
\label{prop:level1-spectrum}
Fix a unit $n\in V$. We then have the following.
\begin{enumerate}
\item  On the $\mathbf1^{\oplus4}$ summand, $H_{n,Q}-\mu I $ is a real $4\times4$ operator whose determinant is
  \begin{equation}
    \label{eq:p1}
    p_{\mathbf1}(\mu,Q)=\mu^4+6\mu^3+(2Q^2-15)\mu^2+(6Q^2-56)\mu+Q^4+Q^2 ,
  \end{equation}
  and the block contributes a right eigenvalue $\mu+Q\hat n$ exactly when $p_{\mathbf1}(\mu,Q)=0$.

\item The $\mathbf8^{\oplus2}$ block contributes a right eigenvalue $\mu+Q\hat n$ exactly when
  \begin{equation}
    \label{eq:circle}
    (\mu+2)^2+Q^2=4 .
  \end{equation}

\item For real $Q\neq0$, $H_{n,Q}$ has no real eigenvalue on the $\mathbf{12}$ summand or on the $\mathbf6^{\oplus4}$ block.

\item Let
  \begin{equation}
    \label{eq:slice-locus}
    \mathcal L=\{p_{\mathbf1}=0\}\cup\{(\mu+2)^2+Q^2=4\}\subset\mathbb R^2 ,
  \end{equation}
  where $p_{\mathbf1}$ is the quartic \eqref{eq:p1}. For each unit $n\in V$, the right eigenvalues of $h$ in $\mathbb C_{\hat n}$ are exactly the $\mu+Q\hat n$ with $(\mu,Q)\in\mathcal L$. The real points $Q=0$ of $\mathcal L$ are exactly $\operatorname{Spec}_{\mathbb R}(h)$.

\end{enumerate}
\end{proposition}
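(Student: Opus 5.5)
The proof organizes the slice pencil $H_{n,Q}-\mu I$ by $\SU(3)_n$-equivariance. By \eqref{eq:su3-decomposition} it preserves each isotypic summand. By Schur's lemma over $\mathbb R$ it then acts on each summand through a small matrix over the commuting algebra of that type. So
\[
\det\nolimits_{56}(H_{n,Q}-\mu I)=p_{\mathbf1}\cdot\det\nolimits_{\mathbf6^{\oplus4}}\cdot\det\nolimits_{\mathbf8^{\oplus2}}\cdot\det\nolimits_{\mathbf{12}},
\]
and a right eigenvalue $\mu+Q\hat n$ exists exactly when one factor vanishes. I would fix an adapted basis with $\hat n=\hat e_1$ and, for each type, pick explicit highest-weight-type generators inside the known $\Gtwo$-components:
\begin{itemize}
\item for $\mathbf1$: $1\otimes n$, $\hat n\otimes n$, the $\mathbf1$ in $S^2_0V$, and the $\Gtwo$-invariant $\sum_i\hat e_i\otimes e_i$;
\item for $\mathbf8$: pieces of $\Lambda^2_{14}V$ and $S^2_0V$;
\item similarly for the $\mathbf6$'s and for $\mathbf{12}$.
\end{itemize}

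For the $\mathbf1^{\oplus4}$ block I would write the $4\times4$ matrix directly. The $h$-part is known from Proposition~\ref{prop:real-spectrum} after projecting the four invariant vectors onto the $\Gtwo$-isotypic pieces. The $R_{\hat n}$-part follows from $\hat n\hat n=-1$ and $\hat e_i\hat n=-\hat n\hat e_i$ on $n^\perp$. Expanding the determinant should give \eqref{eq:p1}. As a check, at $Q=0$ it must factor as $(\mu+7)\mu(\mu^2-\mu-8)$, which it does: $\mu^4+6\mu^3-15\mu^2-56\mu$.

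For $\mathbf8^{\oplus2}$, the module is of real type, so the block is a real $2\times2$ matrix. At $Q=0$ it is $\diag(-4,0)$. $R_{\hat n}$ is skew-adjoint and commutes with $\SU(3)_n$, so on this block it is a skew $2\times2$ matrix, i.e.\ $c\begin{pmatrix}0&-1\\1&0\end{pmatrix}$ with $c=\pm1$ since $R_{\hat n}^2=-1$. The determinant is then $(\mu+4)\mu+Q^2=(\mu+2)^2+Q^2-4$, which is \eqref{eq:circle}.

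For $\mathbf6$ and $\mathbf{12}$, the modules are of complex type, so the blocks are complex matrices of sizes $4$ and $1$. Self-adjointness of $h$ and skewness of $R_{\hat n}$ make $H_{n,Q}$ a Hermitian matrix plus a term $iQ\cdot(\text{Hermitian})$ coming from $R_{\hat n}$. It is cleaner to argue with self-adjoint/skew parts directly. If $(H_{n,Q}-\mu)w=0$ with $w$ real, pairing with $w$ gives $\langle hw,w\rangle=\mu|w|^2$, which carries no information about $Q$. The obstacle is to show that the $R_{\hat n}$ component acting on these blocks forces a contradiction for $Q\neq0$.

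On $\mathbf{12}$, $R_{\hat n}$ acts as the complex structure $\pm J$ and $h$ as the real scalar $0$. So the block is $-\mu\mp QJ$, whose determinant is $(\mu^2+Q^2)^6$, nonzero for $Q\neq0$.

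On $\mathbf6^{\oplus4}$ I would compute the $4\times4$ complex matrix explicitly and show its determinant is a polynomial in $\mu,Q^2$ that is strictly positive when $Q\neq0$, for instance a sum of squares. This explicit $\mathbf6$ computation is the main obstacle, since $h$ mixes the $\mathbf7\oplus\mathbf7$, $\mathbf{14}$ and $\mathbf{27}$ pieces there.

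Finally, $Q=0$ recovers $\operatorname{Spec}_{\mathbb R}(h)$ by Proposition~\ref{prop:real-spectrum}. This also serves as a check on each factor.
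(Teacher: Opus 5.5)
The paper only quotes this proposition from \cite{grigorian-2026-octonionic}. Your route through the $\SU(3)_n$-isotypic blocks is the one behind it: the $\mathbf6^{\oplus4}$ determinant reappears as $p_{\mathbf6}$ in Remark~\ref{rem:one-step-specialization}. Your $\mathbf8^{\oplus2}$ and $\mathbf{12}$ arguments are correct. Schur's lemma, skew-adjointness of $R_{\hat n}$, $R_{\hat n}^2=-1$ and the $h$-eigenvalues $-4,0$ already force the circle and the factor $(\mu^2+Q^2)^6$. There are, however, two problems.

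First, your four $\mathbf1$-vectors are not a basis. The vectors $\hat n\otimes n$, the $S^2_0V$-invariant $\hat n\otimes n-\tfrac17\sum_i\hat e_i\otimes e_i$, and $\sum_i\hat e_i\otimes e_i$ span only a plane. The missing invariant is the one in $\Lambda^2_7V$, namely $f_4=\sum_j(\hat n\times\hat e_j)\otimes e_j$. It cannot be omitted, since $h(1\otimes n)=-1\otimes n-f_4$. Take $f_1=1\otimes n$, $f_2=\hat n\otimes n$, $f_3=\sum_{e_j\perp n}\hat e_j\otimes e_j$ and $f_4$. Then $hf_1=-f_1-f_4$, $hf_2=-f_2-f_3$, $hf_3=-6(f_2+f_3)$ and $hf_4=-6f_1+2f_4$. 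Also $R_{\hat n}$ maps $f_1\mapsto f_2\mapsto-f_1$ and $f_3\mapsto-f_4\mapsto-f_3$. With this basis the $4\times4$ determinant of $H_{n,Q}-\mu$ is exactly \eqref{eq:p1}.

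Second, and this is the genuine gap, item~3 is not proved on $\mathbf6^{\oplus4}$; you only announce a computation. That block carries the content of item~3, and of the word ``exactly'' in item~4 for $Q\neq0$. Your structural argument does not extend to it. On the multiplicity space $\mathbb C^4$ you have $R_{\hat n}=iK$ with $K$ Hermitian and $K^2=1$, but $K$ has signature $(2,2)$:
\begin{itemize}
\item $R_{\hat n}$ acts as $-J_n$ on $\{\hat a\otimes n\}$;
\item it acts as $+J_n$ on the $\mathbf6$ inside $\hat n^\perp\otimes n^\perp$;
\item it acts with one sign each on $\operatorname{span}\{1\otimes a,\hat n\otimes a\}$.
\end{itemize}
So pairing a kernel vector with itself yields only $\langle Kv,v\rangle=0$. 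Note too that the complex determinant of this block is not real; only its squared modulus is a polynomial in $\mu,Q^2$.

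One way to finish is to use the following $\mathbb C$-basis of $\mathrm{Hom}_{\SU(3)_n}(n^\perp,W)$:
\[
a\mapsto 1\otimes a,\qquad a\mapsto\hat n\otimes a,\qquad a\mapsto\hat a\otimes n,\qquad a\mapsto\sum_c\pi(\hat a\times\hat c)\otimes c .
\]
Here $c$ runs over an orthonormal basis of $n^\perp$, $\pi$ is the orthogonal projection onto $\hat n^\perp\subset\Im\mathbb O$, and $i$ acts by precomposition with $J_n$. A direct computation gives, up to complex conjugation,
\[
p_{\mathbf6}(\mu,Q)=\rho(\rho+3\mu)-12\mu^2-32\mu+4Q^2+iQ\bigl(3\rho+16\mu+16\bigr),\qquad \rho=\mu^2+Q^2 .
\]
At $Q=0$ this reduces to $(\mu^2-\mu-8)(\mu^2+4\mu)$, as it must. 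It is also consistent with Remark~\ref{rem:one-step-specialization}: $p_{\mathbf6}(0,1)=5+19i$ and $Q_{12}(0,1,0)=386=|5+19i|^2$.

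For $Q\neq0$, the imaginary part vanishes only if $3\rho=-16(\mu+1)$. Together with $\rho>\mu^2$, this forces $-4<\mu<-\tfrac43$. Substituting, the real part becomes $-\tfrac{16}{9}(2\mu-1)(\mu+4)$, which does not vanish on that interval.

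Without an argument of this kind, your proposal establishes item~1 (once the basis is fixed), item~2, and the $\mathbf{12}$ part of item~3. It does not establish the $\mathbf6^{\oplus4}$ part of item~3, and hence not item~4.
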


The locus $\mathcal L$ is plotted in Figure~\ref{fig:level1-slice-locus}.

\begin{figure}[ht]
  \centering
  \includegraphics[width=0.72\textwidth]{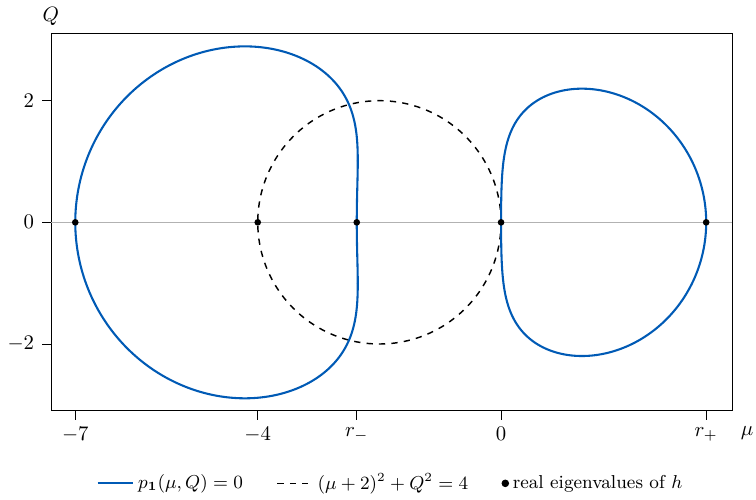}
  \caption{The level-1 slice locus $\mathcal L$ in the $(\mu,Q)$-plane, with equal scales: the quartic $p_{\mathbf1}(\mu,Q)=0$ (solid) and the circle $(\mu+2)^2+Q^2=4$ (dashed). The dots on the line $Q=0$ are the real eigenvalues $-7,-4,0,r_\pm$ of $h$. Since $Q$ is signed, $\mathcal L$ is even in $Q$. Compare \cite[Figure~1]{grigorian-2026-octonionic}, where $r_\pm$ is written $\mu_\pm$.}
  \label{fig:level1-slice-locus}
\end{figure}

\begin{remark}[The family of structure operators]
\label{rem:family}
As shown in \cite[Remark~3.4, (3.6), p.~13]{grigorian-2026-octonionic}, the
$\Gtwo$-equivariant, octonion-Hermitian operators of the
multiplication type considered here form a real two-parameter family, with coefficients
\[
(h_{\alpha,\beta})_{ij}
=-\alpha g_{ij}1+\beta\varphi_{ijk}\hat e_k.
\]
The nonzero members, modulo positive scale, form an $S^1$-family;
one may fix $\alpha^2+\beta^2=1$.
Since $h=h_{1,1}$, we have
\[
h_{\alpha,\beta}=\beta h+(\beta-\alpha)I.
\]
For $\beta\ne0$, the real and one-step right spectra are therefore
obtained from those of $h$ by $\lambda\mapsto\beta\lambda+(\beta-\alpha)$,
with the same eigenspaces. For the two-step coefficient locus,
put $s=\beta-\alpha$. The pencil of Section~\ref{sec:two-step-pencil} satisfies
\[
 h_{\alpha,\beta}-a_0I-a_1R_u-a_2R_v-a_3R_vR_u
 =\beta P_{((a_0-s)/\beta,\,a_1/\beta,\,a_2/\beta,\,a_3/\beta)}.
\]
This invertible affine change preserves kernels, and every coefficient
point is realizable (Proposition~\ref{prop:realizability}). It concerns the full
coefficient locus, rather than a fixed-angle slice or a map of individual
factor pairs. When $\beta=0$, the operator is scalar, $h_{\alpha,0}=-\alpha I$,
and the affine equivalence above does not apply.
\end{remark}

\section{Reduction of the two-step problem}
\label{sec:two-step-pencil}

We reduce the two-step equation to a pencil in four real coefficients and show
that every coefficient point occurs. The adapted frame then describes the
right-multiplication part on two halves of $W$ and splits $W$ into three
invariant sectors.

\begin{definition}
  \label{def:two-step-pencil}
  Let $u,v\in\Im\mathbb O$ be orthonormal. For $\avec=(a_0,a_1,a_2,a_3)\in\mathbb R^4$
  the \emph{two-step pencil} is the four-parameter affine family of operators on $W$
  \begin{equation}
    P_\avec=h-a_0I-a_1R_u-a_2R_v-a_3R_vR_u,\qquad\text{where}\qquad R_vR_u\colon w\mapsto(wu)v .
    \label{eq:two-step-pencil}
  \end{equation}
\end{definition}

Each operator in this pencil is defined for every $\avec\in\mathbb R^4$; statements about $P_\avec$ without
further qualification refer to this ambient coefficient space. The order
$R_vR_u$ is the composition order of Section~\ref{sec:one-step-spectrum}, in
which right multiplications act on the octonion factor of $W$.

\subsection{The frame and arbitrary pairs}
\label{subsec:frame}

The orthonormal pair $(u,v)$ generates the associative quaternion algebra
$\mathbb H=\langle1,u,v,z\rangle$, where $z=uv$
\cite[Section~6.8, Theorem~2]{conway-2003-quaternions}.
Let $\hat A=\Im\mathbb H$ and $\hat B=\mathbb H^\perp$ in the octonion factor,
and let $A,B\subset V$ be their corresponding subspaces. Thus
\begin{equation}
  \mathbb O=\mathbb H\oplus\hat B \qquad \text{and} \qquad V=A\oplus B.
  \label{eq:frame-splittings}
\end{equation}
We use the basis $(\hat\rho_0,\hat\rho_1,\hat\rho_2,\hat\rho_3)=(1,u,v,z)$
of $\mathbb H$. Choose a unit $\ell\in\hat B$ and set
$\hat\xi_\alpha=\hat\rho_\alpha\ell$. The corresponding orthonormal bases are
\[
\begin{array}{c|c}
\text{Octonion factor} & V\text{ factor}\\\hline
\hat A:\ (\hat\rho_1,\hat\rho_2,\hat\rho_3) & A:\ (\rho_1,\rho_2,\rho_3)\\
\hat B:\ (\hat\xi_0,\hat\xi_1,\hat\xi_2,\hat\xi_3) & B:\ (\xi_0,\xi_1,\xi_2,\xi_3)
\end{array}
\]

The splitting of the octonion factor gives two \emph{halves} of $W$:
\begin{equation}
  W_+=\mathbb H\otimes V \qquad \text{and} \qquad W_-=\hat B\otimes V.
  \label{eq:halves}
\end{equation}
Let $\sigma$ be the involution of $\mathbb O$ equal to $1$ on
$\mathbb H$ and to $-1$ on $\hat B$, and let $\Pi_{\hat B}$ be the orthogonal
projection of $\mathbb O$ onto $\hat B$. Then $\sigma=1-2\Pi_{\hat B}$. The orthogonal projections to $W_\pm$ are then $\Pi_\pm$, with $\Pi_-=\Pi_{\hat B}\otimes1$.

The frame stabilizer will also give invariant sectors for the full pencil.
Let $K=\SU(2)=\Stab_{\Gtwo}(u,v)$ be the residual symmetry group of the
frame; it fixes $\mathbb H$ pointwise
\cite[Chapter~6, Appendix~2, p.~81]{conway-2003-quaternions}.
For a $K$-module $U$, write
\[
  U^K=\{s\in U:ks=s\text{ for every }k\in K\}.
\]
The multiplication table of $\mathbb H$ is in
Appendix~\ref{app:su2-bases}, which also describes the action of $K$ on $\hat B$ and the Cayley--Dickson rules that use the basis $\hat\xi_\alpha=\hat\rho_\alpha\ell$.

An arbitrary pair $(\lambda_1,\lambda_2)$ of octonions also determines such a frame by Gram-Schmidt orthogonalization. Take
$u=\Im\lambda_1/|\Im\lambda_1|$ if $\Im\lambda_1\neq0$, and any unit imaginary
octonion otherwise, and write $\Im\lambda_2=\alpha u+\beta v$ with
$\alpha,\beta\in\mathbb R$ and $v\perp u$ a unit imaginary octonion; if
$\Im\lambda_2\in\mathbb Ru$, any such $v$ is allowed and $\beta=0$. Then $(u,v)$
is orthonormal, and
\begin{equation}
  \lambda_1=\mu_1+Q_1u\qquad\text{and}\qquad \lambda_2=\mu_2+\alpha u+\beta v
  \label{eq:general-pair}
\end{equation}
with $\mu_1,Q_1,\mu_2\in\mathbb R$.

Now, for $\avec\in\mathbb R^4$ we define
\begin{equation}
  x=a_1^2+a_2^2,\qquad y=a_3^2\qquad\text{and}\qquad N=a_0^2+x+y,
  \label{eq:invariants}
\end{equation}
and, in the frame $(u,v)$, define the quaternions
\begin{equation}
  \hat q=a_0+a_1u+a_2v+a_3z \qquad \text{and} \qquad \check q=a_0+a_1u+a_2v-a_3z .
  \label{eq:normal-reversed}
\end{equation}

The coefficients of the reduced pencil and their intrinsic meaning are as follows.

\begin{proposition}
  \label{prop:general-reduction}
  Let $(\lambda_1,\lambda_2)$ be a pair of octonions, written in a
  frame $(u,v)$ as in \eqref{eq:general-pair}.

  \begin{enumerate}
    \item On $\mathbb O$ and on $W$,
      \begin{equation*}
        R_{\lambda_2}R_{\lambda_1}
        =a_0+a_1R_u+a_2R_v+a_3R_vR_u,
      \end{equation*}
      where
      \begin{equation}
        \begin{aligned}
          a_0+ia_1&=(\mu_1+iQ_1)(\mu_2+i\alpha),\\
          a_2+ia_3&=\beta(\mu_1+iQ_1).
        \end{aligned}
        \label{eq:reduced-point}
      \end{equation}
      Consequently,
      $h-R_{\lambda_2}R_{\lambda_1}=P_\avec$.
      We call $\avec$ the \emph{reduced point} of the pair
      in the frame $(u,v)$.

    \item Let $\hat q,\check q$ be as in
      \eqref{eq:normal-reversed}, and let $x,y,N$ be as in
      \eqref{eq:invariants}.

      The quaternionic elements $\hat q$ and $\check q$ are
      the products of the original octonions in the two orders:
      \begin{equation*}
        \hat q=\lambda_1\lambda_2
        \qquad\text{and}\qquad
        \check q=\lambda_2\lambda_1.
      \end{equation*}

      Moreover, the invariants admit the following
      intrinsic descriptions:
      \begin{align*}
        a_0
          &=\Re(\lambda_1\lambda_2)
            =\mu_1\mu_2
             -\langle\Im\lambda_1,\Im\lambda_2\rangle,\\
        N
          &=|\lambda_1|^2|\lambda_2|^2,\\
        y
          &=|\Im\lambda_1|^2|\Im\lambda_2|^2
            -\langle\Im\lambda_1,\Im\lambda_2\rangle^2
            =Q_1^2\beta^2,\\
        x
          &=N-a_0^2-y.
      \end{align*}
      Thus $y$ is the Gram determinant of the imaginary
      parts of $\lambda_1$ and $\lambda_2$.

      In particular, $y=a_3^2=0$ if and only if $\Im\lambda_1$ and $\Im\lambda_2$ are linearly dependent. When these equivalent conditions hold, $ R_{\lambda_2}R_{\lambda_1} =R_{\lambda_1\lambda_2}.$
  
    \item Let
      \[
        \mathcal E=\mathrm{span}_{\mathbb R}\{u,v\}
        \qquad\text{and}\qquad
        \Gamma=\{g\in\Gtwo:g\mathcal E=\mathcal E\}.
      \]
      For $g\in\Gamma$, let $\kappa\in O(2)$ be the
      matrix of its action on $\mathcal E$ in the basis $(u,v)$.
      Define
      \[
        \kappa\cdot\avec
        =\bigl(a_0,\,
          \kappa(a_1,a_2)^{\mathsf T},\,
          (\det\kappa)a_3\bigr).
      \]
      Then
      \begin{equation}
        gP_\avec g^{-1}=P_{\kappa\cdot\avec}
        \qquad\text{for }\avec\in\mathbb R^4.
        \label{eq:frame-conjugation}
      \end{equation}
  \end{enumerate}
\end{proposition}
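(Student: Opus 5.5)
The plan is to verify everything coefficient by coefficient, using that $R_a$ on $W$ acts only on the octonion factor. Part 1 is an octonion identity lifted by $\otimes\,\mathrm{id}_V$, and part 3 follows from $\Gtwo$-equivariance of $h$ (Proposition~\ref{prop:equivariance}).

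\emph{Part 1.} By real-linearity of $R$ and centrality of reals,
\[
R_{\lambda_2}R_{\lambda_1}=(\mu_2+\alpha R_u+\beta R_v)(\mu_1+Q_1R_u).
\]
Expanding gives the terms $\mu_1\mu_2$, $\mu_2Q_1R_u$, $\alpha\mu_1R_u$, $\alpha Q_1R_uR_u$, $\beta\mu_1R_v$ and $\beta Q_1R_vR_u$. Alternativity gives $(pu)u=p(uu)=-p$, so $R_uR_u=-1$. Collecting terms yields
\[
a_0=\mu_1\mu_2-\alpha Q_1,\qquad a_1=\mu_2Q_1+\alpha\mu_1,\qquad a_2=\beta\mu_1,\qquad a_3=\beta Q_1.
\]
This is exactly \eqref{eq:reduced-point}. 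The identity holds on $\mathbb O$, hence on $W$, and subtracting it from $h$ gives $P_\avec$.

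\emph{Part 2.} Since $\lambda_1,\lambda_2\in\mathbb H$, which is associative, we compute directly:
\[
\lambda_1\lambda_2=(\mu_1+Q_1u)(\mu_2+\alpha u+\beta v)=\mu_1\mu_2-\alpha Q_1+(\mu_2Q_1+\alpha\mu_1)u+\beta\mu_1v+\beta Q_1z=\hat q.
\]
In the reverse order, $vu=-z$ flips only the $z$-coefficient, which gives $\check q$.

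The intrinsic descriptions then follow in turn.
\begin{itemize}
\item $a_0=\Re(\lambda_1\lambda_2)=\mu_1\mu_2-\langle\Im\lambda_1,\Im\lambda_2\rangle$, because $\langle Q_1u,\alpha u+\beta v\rangle=\alpha Q_1$.
\item $N=|\hat q|^2=|\lambda_1|^2|\lambda_2|^2$ by multiplicativity of the norm. Alternatively, use the complex-number form: $|a_0+ia_1|^2+|a_2+ia_3|^2=(\mu_1^2+Q_1^2)(\mu_2^2+\alpha^2+\beta^2)$.
\item $y=Q_1^2\beta^2$ is the Gram determinant of $Q_1u$ and $\alpha u+\beta v$, namely $Q_1^2(\alpha^2+\beta^2)-\alpha^2Q_1^2$.
\item $x=N-a_0^2-y$ holds by definition.
\end{itemize}
The Gram determinant vanishes exactly when the two imaginary parts are dependent. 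In that case $\beta=0$ or $Q_1=0$, so the $R_v$ and $R_vR_u$ terms reduce accordingly. More directly, $\lambda_1$ and $\lambda_2$ then lie in a common complex slice $\mathbb C_{\hat n}$. Artin's theorem applied to $p,\hat n$ shows $(p\lambda_1)\lambda_2=p(\lambda_1\lambda_2)$, so $R_{\lambda_2}R_{\lambda_1}=R_{\lambda_1\lambda_2}$.

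\emph{Part 3.} For $g\in\Gtwo$, which is an automorphism acting diagonally on $W$, we have $g(pa)=g(p)g(a)$ on each coefficient, so $gR_ag^{-1}=R_{ga}$. Moreover $gh g^{-1}=h$ by Proposition~\ref{prop:equivariance}, and $gR_vR_ug^{-1}=R_{gv}R_{gu}$.

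Write $gu=\kappa_{11}u+\kappa_{21}v$ and $gv=\kappa_{12}u+\kappa_{22}v$. Then $a_1R_{gu}+a_2R_{gv}$ has coefficients $\kappa(a_1,a_2)^{\mathsf T}$ on $(R_u,R_v)$. For the quadratic term, expand
\[
R_{gv}R_{gu}=\kappa_{12}\kappa_{11}R_uR_u+\kappa_{22}\kappa_{21}R_vR_v+\kappa_{12}\kappa_{21}R_uR_v+\kappa_{22}\kappa_{11}R_vR_u.
\]
Use $R_uR_u=R_vR_v=-1$, and use $R_uR_v=-R_vR_u$, which follows from linearizing alternativity: $(pu)v+(pv)u=-2\langle u,v\rangle p=0$. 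This gives
\[
R_{gv}R_{gu}=-(\kappa_{11}\kappa_{12}+\kappa_{21}\kappa_{22})+\det\kappa\,R_vR_u.
\]
The scalar term vanishes because the columns of $\kappa\in O(2)$ are orthogonal. This yields \eqref{eq:frame-conjugation}.

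The main point needing care is this last anticommutation step, since it relies on $u\perp v$ through the linearized alternative law rather than on associativity. Everything else is bookkeeping.
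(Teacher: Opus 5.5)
Your proposal is correct and follows the paper's proof essentially step for step. Both arguments expand $R_{\lambda_2}R_{\lambda_1}$ using $R_u^2=-1$, compute $\lambda_1\lambda_2$ and $\lambda_2\lambda_1$ inside $\mathbb H$, and obtain $R_{gv}R_{gu}=(\det\kappa)R_vR_u$ from the polarized right alternative law. Your Artin's-theorem argument for $R_{\lambda_2}R_{\lambda_1}=R_{\lambda_1\lambda_2}$ is a valid alternative to the paper's route, which sets $a_3=0$ in part~1 and uses linearity of right multiplication.
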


\begin{proof}
  \emph{Part 1.}
  By real linearity of right multiplication,
  \[
    R_{\lambda_1}=\mu_1+Q_1R_u
    \qquad\text{and}\qquad
    R_{\lambda_2}=\mu_2+\alpha R_u+\beta R_v.
  \]
  Right alternativity gives
  $(wu)u=w(uu)=-w$, hence $R_u^2=-1$.
  Expanding the product, we obtain
  \begin{align*}
    R_{\lambda_2}R_{\lambda_1}
      &=(\mu_1\mu_2-Q_1\alpha)
        +(Q_1\mu_2+\mu_1\alpha)R_u\\
      &\quad+\mu_1\beta R_v
        +Q_1\beta R_vR_u.
  \end{align*}
  The coefficients agree with
  \eqref{eq:reduced-point}, proving the stated identity
  on $\mathbb O$. The same identity holds on $W$ because
  right multiplications act on the octonion factor.

  \medskip
  \noindent\emph{Part 2.}
  Since $\lambda_1,\lambda_2$ belong to the associative
  quaternion subalgebra generated by $u$ and $v$,
  we may multiply them using
  \[
    u^2=v^2=-1\qquad\text{and}\qquad uv=z=-vu.
  \]
  This gives
  \begin{align*}
    \lambda_1\lambda_2
      &=(\mu_1+Q_1u)(\mu_2+\alpha u+\beta v)\\
      &=(\mu_1\mu_2-Q_1\alpha)
        +(\mu_1\alpha+Q_1\mu_2)u\\
      &\quad+\mu_1\beta v+Q_1\beta z\\
      &=a_0+a_1u+a_2v+a_3z
       =\hat q.
  \end{align*}
  Reversing the order of multiplication changes only
  the sign of the $z$-component, since $vu=-uv$. Thus
  \begin{equation*}
    \lambda_2\lambda_1
      =a_0+a_1u+a_2v-a_3z
      =\check q.
  \end{equation*}

  The imaginary parts are $Q_1u$ and $\alpha u+\beta v$, with
  inner product $Q_1\alpha$. Thus $a_0=\mu_1\mu_2-Q_1\alpha$
  is $\Re(\lambda_1\lambda_2)$, and their Gram determinant is
  \[
    Q_1^2(\alpha^2+\beta^2)-(Q_1\alpha)^2=Q_1^2\beta^2=a_3^2=y.
  \]
  Norm multiplicativity gives
  $N=|\hat q|^2=|\lambda_1|^2|\lambda_2|^2$, and the definition
  of $N$ gives $x=N-a_0^2-y$.

  Since the inner product on $\Im\mathbb O$ is
  positive definite, the Gram determinant vanishes
  precisely when $\Im\lambda_1$ and $\Im\lambda_2$
  are linearly dependent. This proves the stated
  characterization of $y=0$.

  Finally, if $y=0$, then $a_3=0$, so
  $\hat q=a_0+a_1u+a_2v$. By part~1 and real
  linearity of right multiplication,
  \begin{align*}
    R_{\lambda_2}R_{\lambda_1}
      &=a_0+a_1R_u+a_2R_v\\
      &=R_{a_0+a_1u+a_2v}\\
      &=R_{\hat q}
       =R_{\lambda_1\lambda_2}.
  \end{align*}

  \medskip
  \noindent\emph{Part 3.}
  Let $g\in\Gamma$. Since $h$ is $\Gtwo$-equivariant
  (Proposition~\ref{prop:equivariance}) and
  $gR_pg^{-1}=R_{gp}$ for $p\in\mathbb O$, we have
  \begin{equation*}
    gP_\avec g^{-1}
      =h-a_0-a_1R_{gu}-a_2R_{gv}
       -a_3R_{gv}R_{gu}.
  \end{equation*}

  Right alternativity and its polarization give
  \begin{equation}
    R_u^2=R_v^2=-1
    \qquad\text{and}\qquad
    R_uR_v+R_vR_u
      =-2\langle u,v\rangle=0.
    \label{eq:right-anticommutation}
  \end{equation}
  Since the columns of $\kappa$ are orthonormal,
  these relations imply
  \[
    R_{gv}R_{gu}
      =(\det\kappa)R_vR_u.
  \]
  On the other hand, the linear combination
  $a_1R_{gu}+a_2R_{gv}$ has coefficients
  $\kappa(a_1,a_2)^{\mathsf T}$ in the basis
  $(R_u,R_v)$.

  Thus conjugation by $g$ transforms the coefficients
  precisely according to $\kappa\cdot\avec$, giving
  \[
    gP_\avec g^{-1}=P_{\kappa\cdot\avec}.
  \]
\end{proof}

\begin{proposition}
  \label{prop:realizability}
  For any fixed frame $(u,v)$, the map
  $(\mu_1,Q_1,\mu_2,\alpha,\beta)\mapsto\avec$ is surjective from
  $\mathbb R^5$ onto $\mathbb R^4$.
\end{proposition}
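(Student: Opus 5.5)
We will construct an explicit preimage using the formulas \eqref{eq:reduced-point} of Proposition~\ref{prop:general-reduction}. In complex notation these read
\[
a_0+ia_1=(\mu_1+iQ_1)(\mu_2+i\alpha),\qquad a_2+ia_3=\beta(\mu_1+iQ_1).
\]
Given a target $\avec=(a_0,a_1,a_2,a_3)\in\mathbb R^4$, the plan is to set $c=\mu_1+iQ_1\in\mathbb C$. We then choose $c$ and $\beta$ so that $\beta c=a_2+ia_3$, and afterwards solve for $\mu_2+i\alpha=(a_0+ia_1)/c$. This requires $c\neq0$, which is the only constraint.

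\emph{Case $(a_2,a_3)\neq(0,0)$.} Take $\beta=1$ and $c=a_2+ia_3\neq0$, that is, $\mu_1=a_2$ and $Q_1=a_3$. Then put
\[
\mu_2+i\alpha=\frac{a_0+ia_1}{a_2+ia_3},
\]
which is well defined. Both identities in \eqref{eq:reduced-point} now hold.

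\emph{Case $a_2=a_3=0$.} Take $\beta=0$ and $c=1$, that is, $\mu_1=1$ and $Q_1=0$. Then put $\mu_2=a_0$ and $\alpha=a_1$. The second identity reads $0=0\cdot 1$, and the first reads $a_0+ia_1=1\cdot(a_0+ia_1)$.

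In each case the coefficients $a_0,a_1,a_2,a_3$ computed in part~1 of Proposition~\ref{prop:general-reduction} from $(\mu_1,Q_1,\mu_2,\alpha,\beta)$ equal the target, so the map is surjective. Note that the map is defined as a polynomial map on all of $\mathbb R^5$ for the fixed frame, so we do not need the Gram--Schmidt normalization conventions (such as $\beta\ge0$ or $u$ parallel to $\Im\lambda_1$). If one insists on $\beta\ge0$ in the first case, take $\beta=|a_2+ia_3|$ and $c=(a_2+ia_3)/\beta$ instead.

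There is no real obstacle. The only point needing care is the degenerate case $a_2=a_3=0$, where choosing $c=a_2+ia_3$ would force $c=0$ and hence $a_0=a_1=0$. We avoid this by using $\beta=0$ there instead.
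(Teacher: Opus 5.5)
Your proposal is correct and is essentially the paper's own proof. The paper also sets $c_1=a_0+ia_1$ and $c_2=a_2+ia_3$ in \eqref{eq:reduced-point}, and it takes $(\beta,\mu_1+iQ_1,\mu_2+i\alpha)=(1,c_2,c_1/c_2)$ when $c_2\neq0$ and $(0,1,c_1)$ when $c_2=0$, which are exactly your two cases.
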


\begin{proof}
  Given $\avec\in\mathbb R^4$, set
  $c_1=a_0+ia_1$ and $c_2=a_2+ia_3$.
  Then \eqref{eq:reduced-point} is satisfied by
  \[
    (\beta,\mu_1+iQ_1,\mu_2+i\alpha)
    =
    \begin{cases}
      (1,c_2,c_1/c_2), & c_2\neq0,\\
      (0,1,c_1), & c_2=0.
    \end{cases}
  \]
\end{proof}

Thus every pair has a reduced point, and every coefficient point occurs. The fourth operator of the pencil, $R_vR_u$, can be evaluated on the frame.

\begin{proposition}
  \label{prop:sign-twist}
  For all $p\in\mathbb O$,
  \begin{equation*}
    (pu)v=\sigma(p)\,z,
  \end{equation*}
  or equivalently, $[p,u,v]=2(\Pi_{\hat B}p)\,z$ with the associator as defined by \eqref{eq:associator-rearranged}. On $W$, we then have
  \begin{equation}
    R_vR_u=R_z\circ(\sigma\otimes1),
    \label{eq:sign-twist}
  \end{equation}
  and consequently, for all $\avec\in\mathbb R^4$,
  \begin{equation*}
    P_\avec=h-a_0I-a_1R_u-a_2R_v-a_3R_z(\sigma\otimes1).
  \end{equation*}
\end{proposition}

\begin{proof}
For \(p\in\mathbb H\), associativity of \(\mathbb H\) gives
\((pu)v=pz\).
For \(p=r\ell\in\hat B\), the Cayley--Dickson rules
\eqref{eq:cayley-dickson-rules}
\cite[Section~2.2]{BaezOcto}, also described in
\cite[Section~6.3 and Chapter~6, Appendix~1]{conway-2003-quaternions}, give
\begin{equation*}
  ((r\ell)u)v
    =((r\bar u)\ell)v
    =((r\bar u)\bar v)\ell
    =(rz)\ell\qquad\text{and}\qquad
  (r\ell)z=(r\bar z)\ell=-(rz)\ell,
\end{equation*}
because \(\bar u\bar v=\overline{vu}=z\); so \((pu)v=-pz\) on \(\hat B\).
Thus
\begin{equation}
\label{eq:sign-twist-associator}
  (pu)v=\sigma(p)z\qquad\text{for every }p\in\mathbb O.
\end{equation}
Since right multiplications act on the octonion factor of
\(W\), \eqref{eq:sign-twist-associator} gives
\begin{equation*}
  R_vR_u=R_z\circ(\sigma\otimes1)
  \quad\text{on }W=\mathbb O\otimes V.
\end{equation*}
The associator form follows from
$\sigma(p)z=pz-2(\Pi_{\hat B}p)z$ and
\eqref{eq:associator-rearranged}. Substitution into the pencil gives its
stated form. In particular, $R_vR_u=R_z$ on $W_+$ and $R_vR_u=-R_z$ on $W_-$.
\end{proof}

\begin{proposition}
  \label{prop:halves}
  Fix a frame $(u,v)$, with associated decomposition
  $W=W_+\oplus W_-$ and projections $\Pi_\pm$.
  \begin{enumerate}
    \item[\textup{(a)}] For every $\avec\in\mathbb R^4$,
      with $\hat q,\check q$ as in \eqref{eq:normal-reversed},
      \begin{equation}
        \begin{aligned}
          P_\avec
            &=h-R_{\hat q}\Pi_+-R_{\check q}\Pi_-.
        \end{aligned}
        \label{eq:halves-identity}
      \end{equation}

    \item[\textup{(b)}] If $\avec$ is the reduced point of a pair
      $(\lambda_1,\lambda_2)$ in this frame, then
      \begin{equation}
        P_\avec
          =h-R_{\lambda_1\lambda_2}\Pi_+
             -R_{\lambda_2\lambda_1}\Pi_-.
        \label{eq:pair-halves}
      \end{equation}
      Equivalently, right multiplication satisfies
      \begin{equation*}
        (w\lambda_1)\lambda_2=
        \begin{cases}
          w(\lambda_1\lambda_2), & w\in W_+,\\
          w(\lambda_2\lambda_1), & w\in W_-.
        \end{cases}
      \end{equation*}
  \end{enumerate}
\end{proposition}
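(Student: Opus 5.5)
The plan is to prove part (a) by computing the restrictions of $P_\avec$ to $W_+$ and to $W_-$ separately. Both halves are of the form $(\text{subspace of }\mathbb O)\otimes V$, and every term other than $h$ is a right multiplication acting only on the octonion factor. So it suffices to check the corresponding identity on $\mathbb O$, one summand of $\mathbb O=\mathbb H\oplus\hat B$ at a time. By Proposition~\ref{prop:sign-twist},
\[
P_\avec=h-a_0I-a_1R_u-a_2R_v-a_3R_z(\sigma\otimes1).
\]
On $W_+$ we have $\sigma\otimes1=1$, so the right-multiplication part is $a_0+a_1R_u+a_2R_v+a_3R_z$. By real linearity of $p\mapsto R_p$ this equals $R_{a_0+a_1u+a_2v+a_3z}=R_{\hat q}$. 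On $W_-$ we have $\sigma\otimes1=-1$, so the part becomes $a_0+a_1R_u+a_2R_v-a_3R_z=R_{\check q}$. Since $\Pi_++\Pi_-=I$, these combine to give
\[
P_\avec=h-R_{\hat q}\Pi_+-R_{\check q}\Pi_-.
\]
This is exactly \eqref{eq:halves-identity}.

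One point needs care. The operators $R_{\hat q}$ and $R_{\check q}$ need not preserve $W_\pm$: right multiplication by $u$ preserves $\mathbb H$ and $\hat B$, but the identity is stated with the projection on the right, so no invariance is required. We only use that $R_p\Pi_\pm$ agrees with $(\text{the linear combination})\Pi_\pm$. This holds because the linear combination identity $a_0+a_1R_u+a_2R_v\pm a_3R_z=R_{\hat q\text{ or }\check q}$ is valid on all of $W$. The only input that depends on the half is the sign $\sigma$.

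For part (b), let $\avec$ be the reduced point of the pair. Proposition~\ref{prop:general-reduction}(2) gives $\hat q=\lambda_1\lambda_2$ and $\check q=\lambda_2\lambda_1$, and substituting these into (a) yields \eqref{eq:pair-halves}. For the equivalent form, Proposition~\ref{prop:general-reduction}(1) gives $h-P_\avec=R_{\lambda_2}R_{\lambda_1}$. Comparing with \eqref{eq:pair-halves}, we get
\[
R_{\lambda_2}R_{\lambda_1}=R_{\lambda_1\lambda_2}\Pi_++R_{\lambda_2\lambda_1}\Pi_-.
\]
Evaluating on $w\in W_+$, where $\Pi_+w=w$ and $\Pi_-w=0$, gives $(w\lambda_1)\lambda_2=w(\lambda_1\lambda_2)$. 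Evaluating on $w\in W_-$ gives $(w\lambda_1)\lambda_2=w(\lambda_2\lambda_1)$.

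There is no real obstacle here. The substantive content is the sign twist of Proposition~\ref{prop:sign-twist}, which is already established. The only thing to watch is bookkeeping: the order of composition $R_vR_u$ versus the product $uv=z$, and the fact that the sign flip on $\hat B$ turns $\hat q$ into $\check q$ rather than into something else.
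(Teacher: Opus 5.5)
Your proof is correct and follows the paper's own argument. Both use the sign twist $R_vR_u=\pm R_z$ on $W_\pm$ (Proposition~\ref{prop:sign-twist}) together with real linearity of $p\mapsto R_p$ to obtain $R_{\hat q}$ and $R_{\check q}$ on the two halves, and then get (b) by substituting $\hat q=\lambda_1\lambda_2$ and $\check q=\lambda_2\lambda_1$ from Proposition~\ref{prop:general-reduction}. One aside in your write-up is inaccurate: right multiplication by any quaternion does preserve $\mathbb H$ and $\hat B$, so $R_{\hat q}$ and $R_{\check q}$ in fact preserve $W_\pm$, but as you note, your argument does not use invariance.
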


\begin{proof}
  By Proposition~\ref{prop:sign-twist},
  $R_vR_u=\pm R_z$ on $W_\pm$. Thus, by real linearity
  of right multiplication,
  \[
    a_0I+a_1R_u+a_2R_v+a_3R_vR_u
    =R_{a_0+a_1u+a_2v\pm a_3z}
    \quad\text{on }W_\pm.
  \]
  These are precisely $R_{\hat q}$ and $R_{\check q}$,
  proving (a). For (b), Proposition~\ref{prop:general-reduction}
  gives $P_\avec=h-R_{\lambda_2}R_{\lambda_1}$,
  $\hat q=\lambda_1\lambda_2$, and
  $\check q=\lambda_2\lambda_1$.
  Substituting into (a) proves the claim.
\end{proof}

Each half is preserved by right multiplication by elements of $\mathbb H$.
However, $h$ may couple the two halves. To obtain invariant subspaces for
the full pencil, we now decompose $W$ into $K$-isotypic sectors.

\subsection{The \texorpdfstring{$\SU(2)$}{SU(2)} sectors}
\label{subsec:sectors}

The operator $h$ and all four operators of the pencil commute with $K=\SU(2)$, so $W$ splits according to representations of $K$. The action of $K$ on $\hat B$ and the types of its irreducible modules that occur in $W$ are described in Appendix~\ref{app:su2-bases}, equation~\eqref{eq:frame-stabilizer-action} and Lemma~\ref{lem:su2-branching}. The real irreducible types that occur are $\mathbf1$, $\mathbf3$ and
$\mathbf4$, of dimensions $1$, $3$ and $4$, with $\mathbf4$ of quaternionic
type. The subscript of $W_{\mathbf i}$ denotes the dimension of its
irreducible type; the sector itself can have larger dimension.
For $\gamma\in\mathbb H$ and $X\in\su(2)$ let
\begin{equation}
  \begin{aligned}
    \varepsilon_\gamma&=\sum_{\alpha=0}^{3}(\gamma\hat\xi_\alpha)\otimes\xi_\alpha ,\\
    \text{and}\quad w_{X,\gamma}&=\sum_{\alpha=0}^{3}\bigl((X\hat\xi_\alpha)\gamma\bigr)\otimes\xi_\alpha ,
  \end{aligned}
  \label{eq:epsilon}
\end{equation}
where $\varepsilon_\gamma$ does not depend on the orthonormal basis $(\hat\xi_\alpha)$ of $\hat B$, and $X$ acts on $\hat B$ as an element of the Lie algebra of
$\SU(2)\subset\Gtwo$, that is, as a derivation of $\mathbb O$ that vanishes on
$\mathbb H$ and preserves $\hat B$.


\begin{theorem}
  \label{thm:sectors}
  For $\mathbf i\in\{\mathbf1,\mathbf3,\mathbf4\}$,
  let $W_{\mathbf i}$ denote the $K$-isotypic component
  of $W$ of type $\mathbf i$. Then
  \begin{equation}
    W=W_{\mathbf1}\oplus W_{\mathbf3}\oplus W_{\mathbf4}
    \label{eq:sector-decomposition}
  \end{equation}
  is an orthogonal decomposition, with each summand preserved
  by $P_\avec$ for every $\avec\in\mathbb R^4$ and by the
  plane stabilizer $\Gamma$. More precisely:
  \begin{enumerate}
    \item[\textup{(a)}]
      The trivial isotypic component is
      \[
        W_{\mathbf1}=W^K
          =(\mathbb H\otimes A)
            \oplus\{\varepsilon_\gamma:\gamma\in\mathbb H\},
        \qquad\text{and}\qquad
        \dim_{\mathbb R}W_{\mathbf1}=16.
      \]

    \item[\textup{(b)}]
      The component $W_{\mathbf3}$ has real dimension $12$ and
      satisfies
      \[
        W_{\mathbf3}\subset\hat B\otimes B
        \qquad\text{and}\qquad W_{\mathbf3}\subset W_-.
      \]
      The map $X\otimes\gamma\mapsto w_{X,\gamma}$
      defines a $K$-equivariant isomorphism
      \[
        \su(2)\otimes\mathbb H\cong W_{\mathbf3},
      \]
      where $K\cong\SU(2)$ acts by $\operatorname{Ad}$
      on $\su(2)$ and trivially on $\mathbb H$.

    \item[\textup{(c)}]
      The remaining component is
      \[
        W_{\mathbf4}
          =(\mathbb H\otimes B)\oplus(\hat B\otimes A),
        \qquad\text{and}\qquad
        \dim_{\mathbb R}W_{\mathbf4}=28.
      \]

    \item[\textup{(d)}]
      As a $K$-module,
      \[
        W_{\mathbf4}\cong\hat B^{\oplus7}
        \qquad\text{and}\qquad
        \operatorname{End}_K(W_{\mathbf4})
          \cong M_7(\mathbb H),
      \]
      where $\hat B\cong\mathbb H$ is the irreducible
      module of type $\mathbf4$, of quaternionic type.
      Under a $K$-equivariant identification with
      $\hat B^{\oplus7}$, the matrices act on row vectors
      from the right.

  \end{enumerate}
\end{theorem}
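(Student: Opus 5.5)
Since $K\subset\Gtwo$ fixes $u$, $v$ and $z$, it commutes with $h$ (Proposition~\ref{prop:equivariance}) and with $R_u,R_v,R_vR_u$. Indeed, $kR_pk^{-1}=R_{kp}$, as in the proof of Proposition~\ref{prop:general-reduction}. Every $P_\avec$ is therefore $K$-equivariant and preserves each $K$-isotypic component. The group $K$ acts orthogonally, so distinct isotypic components are orthogonal. For $\Gamma$: $\Gamma$ normalizes $K$, because $K$ is the pointwise stabilizer of $\mathcal E$ and $\Gamma$ preserves $\mathcal E$. Hence each $g\in\Gamma$ permutes the isotypic components, and it carries a component of type $\mathbf i$ to one of the same dimension type. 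The three types have distinct dimensions, so each $W_{\mathbf i}$ is $\Gamma$-stable. The real content of the theorem is therefore the branching computation (a)--(d).

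For the branching I would use the $K$-module structure from Appendix~\ref{app:su2-bases}. Here $K$ acts trivially on $\mathbb H$ and on $A$. On $\hat B$ and on $B\cong\hat B$ it acts through the quaternionic module $\mathbf4$: identifying $\hat B$ with $\mathbb H$ via $r\ell\mapsto r$, $K\cong\Sp(1)$ acts by left multiplication by unit quaternions, and this commutes with the right $\mathbb H$-action (Lemma~\ref{lem:su2-branching}). Writing $\mathbb O=\mathbb H\oplus\hat B$ and $V=A\oplus B$ gives
\[
W=(\mathbb H\otimes A)\oplus(\mathbb H\otimes B)\oplus(\hat B\otimes A)\oplus(\hat B\otimes B).
\]
The first summand is trivial, of dimension $12$. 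The second and third are $\mathbf4^{\oplus4}$ and $\mathbf4^{\oplus3}$, giving $W_{\mathbf4}$ of dimension $28$ and (c). For $\hat B\otimes B\cong\mathbf4\otimes_{\mathbb R}\mathbf4$ I use $\mathbf4\otimes_{\mathbb R}\mathbf4\cong\End_{\mathbb R}(\mathbb H)$ via the invariant metric. The endomorphisms commuting with left $\Sp(1)$ are the right multiplications, a trivial $\mathbf1^{\oplus4}$. The rest is $\Im\mathbb H\otimes\mathbb H\cong\mathbf3\otimes\mathbb H$, the maps $L_X\circ R_\gamma$. This gives dimensions $4$ and $12$. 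Combining $12+4=16$ gives (a), and the $12$ gives (b). The inclusion $W_{\mathbf3}\subset\hat B\otimes B\subset W_-$ holds because $\mathbf3$ occurs in no other summand.

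The main obstacle is matching these abstract pieces with the explicit vectors $\varepsilon_\gamma$ and $w_{X,\gamma}$. Under the metric identification $B\cong\hat B$, the tensor $\sum_\alpha T(\hat\xi_\alpha)\otimes\xi_\alpha$ corresponds to the endomorphism $T$ of $\hat B$. Basis independence and equivariance are then automatic: $k\cdot\sum T\hat\xi_\alpha\otimes\xi_\alpha$ corresponds to $kTk^{-1}$. For $\varepsilon_\gamma$, $T=L_\gamma|_{\hat B}$. I must check that $L_\gamma$ for $\gamma\in\mathbb H$ preserves $\hat B$ and commutes with $K$. The latter holds because $k(\gamma p)=k(\gamma)k(p)=\gamma\,k(p)$. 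The map $\gamma\mapsto L_\gamma|_{\hat B}$ is injective since $\gamma\ell\neq0$, so it gives the $4$-dimensional invariant space. For $w_{X,\gamma}$, $T=R_\gamma\circ X$. Here $R_\gamma$ commutes with $K$ by the same argument, and $X$ transforms by $\operatorname{Ad}$, so $X\otimes\gamma\mapsto w_{X,\gamma}$ is $K$-equivariant. For injectivity I would use the Cayley--Dickson rules \eqref{eq:cayley-dickson-rules}. These show that $\su(2)$ acts on $\hat B\cong\mathbb H$ by left multiplication by imaginary quaternions, while $R_\gamma$ acts as a right multiplication (conjugated). Then $\{L_XR_\gamma\}$ spans $\Im\mathbb H\otimes\mathbb H\subset\End(\mathbb H)$, and its image has no trivial component. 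Hence the image is $12$-dimensional of type $\mathbf3$.

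Finally, for (d), $W_{\mathbf4}\cong\hat B^{\oplus7}$ as computed above. Schur's lemma for the quaternionic-type module gives $\End_K(\hat B)\cong\mathbb H$, acting by right multiplication. So $\End_K(\hat B^{\oplus7})\cong M_7(\mathbb H)$, acting on row vectors from the right, which is consistent with the right-module convention.
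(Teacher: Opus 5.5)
Your proposal is correct and follows the paper's route. It uses the same four-piece tensor splitting, the metric identification $\hat B\otimes B\cong\End(\hat B)$ matching $\varepsilon_\gamma$ with the commutant and $w_{X,\gamma}$ with $R_\gamma\circ X$, injectivity from $\mathbb H\otimes\mathbb H^{\mathrm{op}}\cong M_4(\mathbb R)$, and Schur's lemma for (d). There are two local differences, both valid and slightly more elementary: you decompose $\hat B\otimes B$ directly as $\mathbb H_{\mathrm{Ad}}\otimes\mathbb H$ rather than by complexified Clebsch--Gordan, and you obtain $\Gamma$-invariance from the fact that the three $K$-types in $W$ have distinct dimensions, instead of the paper's observation that conjugation by $g\in\Gamma$ is an inner automorphism of $K\cong\SU(2)$.
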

\begin{proof}
  The operator $h$ is $\Gtwo$-equivariant
  (Proposition~\ref{prop:equivariance}), while $R_u$ and
  $R_v$ commute with $K$. Thus every $P_\avec$ commutes
  with $K$ and preserves its isotypic components.
  These components are mutually orthogonal since the
  $K$-action is orthogonal. Lemma~\ref{lem:su2-branching} gives
  \[
    W\cong\mathbf1^{\oplus16}\oplus\mathbf3^{\oplus4}
      \oplus\mathbf4^{\oplus7}
  \]
  and locates these isotypic summands in the tensor decomposition.
  Their real dimensions are $16$, $12$ and $28$. We now establish
  the explicit identifications in (a)--(d), using
  the commutant description in Appendix~\ref{app:su2-bases}.

By Lemma~\ref{lem:su2-branching},
\(W_{\mathbf1}=W^K=(\mathbb H\otimes A)\oplus(\hat B\otimes B)^K\) and
\(\dim W^K=16\).
The metric and \(B\cong\hat B\) identify
\((\hat B\otimes B)^K\) with \(\End_K(\hat B)\); this identification
\(\hat B\otimes B=\End(\hat B)\) is basis free, and it sends
\(\varepsilon_\gamma\) of \eqref{eq:epsilon} to
\(\hat\eta\mapsto\sum_{\alpha}\langle\hat\eta,\hat\xi_\alpha\rangle\,
\gamma\hat\xi_\alpha=\gamma\hat\eta\), for every orthonormal basis
\((\hat\xi_\alpha)\) of \(\hat B\).
The commutant described in Appendix~\ref{app:su2-bases} consists of the maps
\(\hat\xi\mapsto\gamma\hat\xi\), whose tensors are the
\(\varepsilon_\gamma\), which therefore do not depend on the choice of basis.

Under the metric identification
\(\hat B\otimes B\cong\End(\hat B)\), the tensor \(w_{X,\gamma}\) corresponds to
\(R_\gamma\circ X|_{\hat B}\), so it does not depend on the choice of
orthonormal basis. We prove the parametrization of \(W_{\mathbf3}\):
the map
\(X\otimes\gamma\mapsto w_{X,\gamma}\) of \eqref{eq:epsilon} is a
\(K\)-equivariant isomorphism
\(\su(2)\otimes\mathbb H\to W_{\mathbf3}\), with \(K\) acting by \(\operatorname{Ad}\) on
\(\su(2)\) and trivially on the label \(\gamma\).

The map is equivariant: since \(g\in K\) is an automorphism of \(\mathbb O\) that fixes \(\gamma\) and
maps orthonormal bases to orthonormal bases,
\(g\cdot w_{X,\gamma}=w_{\operatorname{Ad}_gX,\gamma}\).

For injectivity, the operators \(R_\gamma|_{\hat B}\) form the commutant, while
\(\mathbb R\oplus\su(2)\) is a commuting quaternion copy.
The resulting identification
\(\mathbb H\otimes\mathbb H^{\mathrm{op}}\cong M_4(\mathbb R)\)
gives injectivity on \(\su(2)\otimes\mathbb H\).

Finally, the image is \(\mathbf3\)-isotypic and has dimension \(12\), so it is
all of \(W_{\mathbf3}\) by Lemma~\ref{lem:su2-branching}. Its multiplicity space
is therefore the label space \(\mathbb H\).

By Lemma~\ref{lem:su2-branching},
\(W_{\mathbf4}=(\mathbb H\otimes B)\oplus(\hat B\otimes A)\cong\hat B\otimes\mathbb R^7=\hat B^7\cong\mathbb H^7\),
with \(K\) acting by left multiplication in each slot, and \(\hat B\) is of
quaternionic type with commutant the right quaternion multiplications;
on seven copies these give \(M_7(\mathbb H)\), the operators
\(\svec\mapsto\svec\mathcal M\), \(\mathcal M\in M_7(\mathbb H)\), acting on rows
\(\svec\in\mathbb H^7\) from the right.

  We next show that $\Gamma$ preserves each component.
  Since $K$ is the pointwise stabilizer of
  $\mathcal E=\mathrm{span}_{\mathbb R}\{u,v\}$,
  every $g\in\Gamma$ normalizes $K$.
  Every automorphism of $K\cong\SU(2)$ is inner, so
  there is $k_0\in K$ such that
  \[
    g^{-1}kg=k_0kk_0^{-1}
    \qquad\text{for }k\in K.
  \]
  Hence $gk_0$ commutes with $K$ and preserves its
  isotypic components. Since $k_0$ also preserves
  them, so does $g$.

\end{proof}

These sectors are common invariant subspaces for all operators in the pencil. Their symmetry also
reduces the variables needed for the determinant calculations.

\begin{proposition}[Dependence on the invariants]
  \label{prop:sector-invariants}
  The restriction homomorphism $\Gamma\to O(2)$, $g\mapsto\kappa$,
  is surjective. For each $\mathbf i\in\{\mathbf1,\mathbf3,\mathbf4\}$,
  the sector determinant
  \[
    D_{\mathbf i}(\avec)=\det(P_\avec|_{W_{\mathbf i}})
  \]
  satisfies
  \[
    D_{\mathbf i}(\kappa\cdot\avec)=D_{\mathbf i}(\avec)
    \qquad\text{for }\kappa\in O(2).
  \]
  Consequently, $D_{\mathbf i}$ is a polynomial in $a_0$, $x$ and $y$.
\end{proposition}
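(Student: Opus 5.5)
Three things have to be shown: that $\Gamma\to O(2)$ is surjective, that each $D_{\mathbf i}$ is $O(2)$-invariant, and that an $O(2)$-invariant polynomial in $\avec$ is a polynomial in $a_0,x,y$.

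\textbf{Surjectivity.} I would use that $\Gtwo$ acts transitively on orthonormal pairs of imaginary units (equivalently, on Stiefel pairs; see \cite[Chapter~6]{conway-2003-quaternions}). Given $\kappa\in O(2)$, the pair $(u',v')$ whose coordinates in the basis $(u,v)$ are the columns of $\kappa$ is again orthonormal in $\mathcal E$. Transitivity gives $g\in\Gtwo$ with $gu=u'$ and $gv=v'$. Then $g\mathcal E=\mathcal E$, so $g\in\Gamma$, and its restriction matrix is exactly $\kappa$. This applies equally to reflections $\det\kappa=-1$, since $(u',v')$ is simply another orthonormal pair; no orientation condition on pairs enters.

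\textbf{Invariance of the sector determinants.} Fix $g\in\Gamma$ with image $\kappa$. By \eqref{eq:frame-conjugation}, $gP_\avec g^{-1}=P_{\kappa\cdot\avec}$. Theorem~\ref{thm:sectors} shows that $g$ preserves each $W_{\mathbf i}$, so $g|_{W_{\mathbf i}}$ is an invertible operator on $W_{\mathbf i}$. Restricting the conjugation identity gives
\[
P_{\kappa\cdot\avec}|_{W_{\mathbf i}}=g|_{W_{\mathbf i}}\,P_\avec|_{W_{\mathbf i}}\,(g|_{W_{\mathbf i}})^{-1}.
\]
Taking determinants yields $D_{\mathbf i}(\kappa\cdot\avec)=D_{\mathbf i}(\avec)$, and surjectivity makes this hold for every $\kappa\in O(2)$.

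\textbf{Reduction to $a_0,x,y$.} $D_{\mathbf i}$ is a polynomial in $\avec$ because $P_\avec$ is affine in $\avec$. Apply $\kappa=\diag(1,-1)$: it sends $(a_1,a_2,a_3)\mapsto(a_1,-a_2,-a_3)$. Next apply the rotation by $\pi$, which sends $(a_1,a_2,a_3)\mapsto(-a_1,-a_2,a_3)$. Composing these and conjugating by rotations shows that the full action is that of $O(2)$ on $(a_1,a_2)$ together with $a_3\mapsto\det(\kappa)a_3$. The plan is to decompose $D_{\mathbf i}=F_0+a_3F_1$, where $F_0,F_1$ are even in $a_3$.

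Invariance under $SO(2)$ makes each $F_j$ an $SO(2)$-invariant polynomial in $(a_1,a_2)$ with coefficients polynomial in $(a_0,a_3^2)$. By the classical invariant theory of $SO(2)$ on $\mathbb R^2$, such invariants are polynomials in $a_1^2+a_2^2$. The argument is to average over the group, or to write the polynomial in the variable $a_1+ia_2$ and its conjugate and observe that only terms in $|a_1+ia_2|^2$ survive.

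Applying the reflection $\diag(1,-1)$ then fixes $x$ and negates $a_3$. This forces $a_3F_1=-a_3F_1$, so $F_1=0$. Hence $D_{\mathbf i}$ is a polynomial in $a_0$, $x$ and $y=a_3^2$.

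The main obstacle is the transitivity of $\Gtwo$ on orthonormal pairs, which is standard, so I would cite it rather than reprove it. The other point requiring care is the bookkeeping showing that reflections correctly flip the sign of $a_3$; this is already encoded in \eqref{eq:frame-conjugation}.
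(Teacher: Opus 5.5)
Your proposal is correct and follows the paper's proof. Surjectivity comes from transitivity of $\Gtwo$ on ordered orthonormal pairs, and the determinant invariance comes from $gP_\avec g^{-1}=P_{\kappa\cdot\avec}$ together with $\Gamma$-invariance of the sectors. The only difference is the final reduction: the paper restricts to the slice $a_2=0$, uses $\kappa=-I$ and $\kappa=\diag(-1,1)$ to make $D_{\mathbf i}(a_0,\tau,0,\upsilon)$ even in $\tau$ and in $\upsilon$, and then rotates $(a_1,a_2)$ to $(\sqrt{x},0)$. This avoids the $\mathrm{SO}(2)$ invariant theory and the even/odd split in $a_3$ that you use, though both arguments are valid.
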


\begin{proof}
  Recall that $\Gtwo$ acts transitively
  on ordered orthonormal pairs in $\Im\mathbb O$.
  Consequently, every $\kappa\in O(2)$ is induced by
  some $g\in\Gamma$. By \eqref{eq:frame-conjugation}
  and the $\Gamma$-invariance of each $W_{\mathbf i}$,
  \[
    D_{\mathbf i}(\kappa\cdot\avec)
      =D_{\mathbf i}(\avec),
  \]
  proving the claimed invariance.

  Finally, fix $\mathbf i$ and set
  \[
    f(a_0,\tau,\upsilon)=D_{\mathbf i}(a_0,\tau,0,\upsilon).
  \]
  The action of $\kappa=-I$ makes $f$ even in $\tau$,
  while $\kappa=\operatorname{diag}(-1,1)$ then makes
  it even in $\upsilon$. Thus, $f(a_0,\tau,\upsilon)=F(a_0,\tau^2,\upsilon^2)$
  for some polynomial $F$. Since a rotation can take
  $(a_1,a_2)$ to $(\sqrt{x},0)$ without changing $a_3$,
  rotational invariance gives
  \[
    D_{\mathbf i}(\avec)
      =F(a_0,x,a_3^2)
      =F(a_0,x,y).
  \]
\end{proof}

We call $W_{\mathbf1}$ the fixed sector since it is preserved pointwise by $\SU(2)$. In this sector, the part $\mathbb H\otimes A$ lies in $W_+$ and the
four-dimensional space of the $\varepsilon_\gamma$ lies in $W_-$. A quaternionic
coordinate system shows how the pencil acts on both parts at once.

\begin{proposition}
  \label{prop:w1-normal-form}
  Identify $W_{\mathbf1}$ with $\mathbb H^3\oplus\mathbb H$
  via the real-linear isomorphism
  \begin{equation*}
    \begin{aligned}
      \Phi:W_{\mathbf1}&\longrightarrow\mathbb H^3\oplus\mathbb H,\\
      \sum_{j=1}^3\vartheta_j\otimes\rho_j+\varepsilon_\gamma
        &\longmapsto(\vartheta_1,\vartheta_2,\vartheta_3;\eta),
        \qquad\text{where}\qquad \eta=z\gamma\bar z.
    \end{aligned}
  \end{equation*}
  We label the first three quaternionic coordinates by
  $1,2,3$ and the last by $0$.
  Then, for every $\avec\in\mathbb R^4$,
  the operator $a_0I+a_1R_u+a_2R_v+a_3R_vR_u$
  acts by right multiplication by $\hat q$ on all four slots:
  \begin{equation}
    (\vartheta_1,\vartheta_2,\vartheta_3;\eta)
      \longmapsto
    (\vartheta_1\hat q,\vartheta_2\hat q,\vartheta_3\hat q;\eta\hat q).
    \label{eq:w1-scalar-part}
  \end{equation}
  The matrix of $h$ in these coordinates is given in
  Appendix~\ref{app:fixed-sector-calculation},
  equation~\eqref{eq:fixed-sector-h-blocks}.
\end{proposition}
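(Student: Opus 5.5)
The plan is to treat the two parts of $W_{\mathbf1}$ separately, using Proposition~\ref{prop:halves}(a). On $W_+$ the operator $a_0I+a_1R_u+a_2R_v+a_3R_vR_u$ equals $R_{\hat q}$, and on $W_-$ it equals $R_{\check q}$.

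\emph{The part $\mathbb H\otimes A$.} This part lies in $W_+$, and right multiplication acts coefficientwise on the octonion factor. Hence
\[
\sum_j\vartheta_j\otimes\rho_j\longmapsto\sum_j(\vartheta_j\hat q)\otimes\rho_j .
\]
So the first three slots are multiplied on the right by $\hat q$, as claimed in \eqref{eq:w1-scalar-part}.

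\emph{The part spanned by the $\varepsilon_\gamma$.} This part lies in $W_-$, so the operator acts as $R_{\check q}$, sending
\[
\varepsilon_\gamma\longmapsto\sum_\alpha\bigl((\gamma\hat\xi_\alpha)\check q\bigr)\otimes\xi_\alpha .
\]
I will use the basis $\hat\xi_\alpha=\hat\rho_\alpha\ell$ and the Cayley--Dickson rules \eqref{eq:cayley-dickson-rules}. For quaternions $a,b$ these read $a(b\ell)=(ba)\ell$ and $(a\ell)b=(a\bar b)\ell$.

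Step 1. Writing $r=\hat\rho_\alpha$, the rules give $\gamma(r\ell)=(r\gamma)\ell$. Applying them again,
\[
\bigl((r\gamma)\ell\bigr)\check q=(r\gamma\bar{\check q})\ell=(\gamma\bar{\check q})(r\ell).
\]
The last equality is the first rule read backwards. Therefore $R_{\check q}\varepsilon_\gamma=\varepsilon_{\gamma\bar{\check q}}$. This is consistent with the basis independence of $\varepsilon_\gamma$ recorded in Theorem~\ref{thm:sectors}(a).

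Step 2. Pass to the coordinate $\eta=z\gamma\bar z$. The new coordinate is
\[
z\gamma\bar{\check q}\bar z=\eta\,(z\bar{\check q}\bar z),
\]
using $\bar z z=1$ and associativity of $\mathbb H$.

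Step 3. Compute $z\bar{\check q}\bar z$. We have $\bar{\check q}=a_0-a_1u-a_2v+a_3z$. Conjugation by the unit $z$ fixes $1$ and $z$, and it negates $u$ and $v$, since $z$ anticommutes with both. Hence
\[
z\bar{\check q}\bar z=a_0+a_1u+a_2v+a_3z=\hat q .
\]
So the last slot becomes $\eta\hat q$, which completes \eqref{eq:w1-scalar-part}.

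The only delicate point is the second part: tracking the Cayley--Dickson sign and order conventions exactly. An error there would give $\check q$ or $\bar{\hat q}$ in place of $\hat q$; the twist $\eta=z\gamma\bar z$ is precisely what converts $\bar{\check q}$ into $\hat q$. I would double-check the rules against Proposition~\ref{prop:sign-twist}, which was derived from the same rules. The claim about the matrix of $h$ is only a reference to Appendix~\ref{app:fixed-sector-calculation} and needs no proof here.
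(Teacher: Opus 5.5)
Your proof is correct and follows essentially the paper's route. You take $\hat q$ on the $\mathbb H\otimes A$ slots, use the Cayley--Dickson computation $R_p\varepsilon_\gamma=\varepsilon_{\gamma\bar p}$ on slot $0$, and then conjugate by $z$, which turns $\bar{\check q}=\bar z\hat q z$ into $\hat q$; the only cosmetic difference is that you invoke Proposition~\ref{prop:halves}(a) to get $R_{\check q}$ on $W_-$ at once, whereas the paper evaluates $R_p$ and $R_vR_u$ on $\varepsilon_\gamma$ separately.
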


\begin{proof}
  By Proposition~\ref{prop:sign-twist}, $R_vR_u=R_z$ on $W_+=\mathbb H\otimes V$, hence 
  the operator acts by right multiplication by $\hat q$
  on slots $1,2,3$, which correspond to $\mathbb{H} \otimes A \subset W_+$.

  On slot $0$, the Cayley--Dickson rule
  \eqref{eq:complement-products} gives
  \[
    R_p\varepsilon_\gamma=\varepsilon_{\gamma\bar p}
    \qquad\text{and}\qquad
    R_vR_u\varepsilon_\gamma=\varepsilon_{\gamma z}.
  \]
  Hence the operator acts on $\gamma$ by
  \[
    \gamma\longmapsto
    \gamma(a_0-a_1u-a_2v+a_3z)
    =\gamma(\bar z\hat q z).
  \]
  Under the change of variable $\eta=z\gamma\bar z$,
  this becomes
  \[
    \eta\longmapsto
    z\gamma(\bar z\hat q z)\bar z
    =\eta\hat q.
  \]
  Thus all four slots carry right multiplication
  by $\hat q$, as claimed.
\end{proof}


The tensor metric in these coordinates is
\begin{equation*}
  \|\Phi^{-1}(\boldsymbol\vartheta;\eta)\|^2
   =\sum_{j=1}^3|\vartheta_j|^2+4|\eta|^2,
  \qquad\text{with}\qquad G=\diag(I_{12},4I_4).
\end{equation*}
Indeed $\|\varepsilon_\gamma\|^2=4|\gamma|^2$ and
$|z\gamma\bar z|=|\gamma|$.

These coordinates are not orthonormal: the fourth quaternionic slot has
metric weight $4$. This weight accounts for the transpose relation
$h_{A0}=4h_{0A}^{\mathsf T}$ used in
Appendix~\ref{app:fixed-sector-calculation}.

\section{The two-step determinant and spectral locus}
\label{sec:determinant-locus}

The invariant sectors of Theorem~\ref{thm:sectors} reduce the determinant
calculation to three blocks. Table~\ref{tab:sector-summary} summarizes the
conclusions proved below: the determinant theorems
\ref{thm:fixed-sector-determinant}, \ref{thm:three-block-determinant} and
\ref{thm:four-block-determinant}, the invertibility theorem
\ref{thm:four-block-positivity}, and the real-locus theorem
\ref{thm:real-locus}. The factors $Q_{14},Q_{12}$ and sets $Z,S$ are defined
in the corresponding subsections.

\begin{table}[ht]
\centering
\begin{tabular}{c|c|c|l}
Sector & Dimension & Real determinant & Real singular locus\\\hline
$W_{\mathbf1}$ & $16$ & $NQ_{14}$ & $Z$\\
$W_{\mathbf3}$ & $12$ & $[N(N+4a_0)]^3$ & $S$\\
$W_{\mathbf4}$ & $28$ & $[NQ_{12}]^2$ & contained in $Z\cap\{a_3=0\}$
\end{tabular}
\caption{Sector contributions to the real two-step locus.}
\label{tab:sector-summary}
\end{table}

\subsection{The fixed sector \texorpdfstring{$W_{\mathbf1}$}{W1}}
\label{subsec:fixed-sector-block}

We use the fixed-sector coordinates
$(\vartheta_1,\vartheta_2,\vartheta_3;\eta)\in\mathbb H^3\oplus\mathbb H$
of Proposition~\ref{prop:w1-normal-form}.
The first three slots, indexed by $A=(1,2,3)$, correspond to
$\mathbb H\otimes A\subset W_+$, while slot $0$ corresponds to
$\varepsilon_\gamma\in W_-$. In these coordinates, the scalar
part of $P_\avec$ acts by $R_{\hat q}$ on every slot.

Writing $\Phi h\Phi^{-1}$ in blocks
$h_{AA},h_{A0},h_{0A},h_{00}$ as in
\eqref{eq:fixed-sector-h-blocks}, we obtain
\begin{equation*}
  \Phi P_\avec\Phi^{-1}
  =\begin{pmatrix}
    P_{AA} & P_{A0}\\
    P_{0A} & P_{00}
  \end{pmatrix}
  =\begin{pmatrix}
    h_{AA}-R_{\hat q}^{\oplus3} & h_{A0}\\
    h_{0A} & h_{00}-R_{\hat q}
  \end{pmatrix},
\end{equation*}
where $R_{\hat q}^{\oplus3}$ denotes componentwise right
multiplication on $\mathbb H^3$.

To analyze the $AA$ block, let $\mathbf r=(u,v,z)^{\mathsf T}$, and let
$\mathbf r^*=\bar{\mathbf r}^{\mathsf T}$ denote its
quaternionic conjugate transpose. Then $h_{AA}=L_M$, where
\[
  M=-\mathbf r\mathbf r^*,
  \qquad\text{and}\qquad \mathbf r^*\mathbf r=3.
\]
Thus $\mathbb H^3$ decomposes into right quaternionic
submodules
\[
  \mathbb H^3=U_3\oplus U_0,
  \qquad\text{where}\qquad
  U_3=\mathbf r\mathbb H\quad\text{and}\quad
  U_0=\ker\mathbf r^*,
\]
of ranks one and two, respectively.
On these submodules, $P_{AA}$ acts as
\[
  P_{AA}|_{U_3}=-R_{\hat q+3}
  \qquad\text{and}\qquad
  P_{AA}|_{U_0}=-R_{\hat q}.
\]
Consequently, using $N=|\hat q|^2$ and $N_3=|\hat q+3|^2=N+6a_0+9$, we obtain
\begin{equation*}
  \det P_{AA}=N^4N_3^2.
\end{equation*}
When $NN_3\ne0$, the inverse of $P_{AA}$ is given by
\eqref{eq:fixed-sector-aa-inverse}. The projections
and the corresponding calculations are provided in
Appendix~\ref{app:fixed-sector-calculation}.

For $NN_3\ne0$, we eliminate the $\mathbb H^3$ block
using the Schur complement
\begin{equation*}
  \mathcal S
  =P_{00}-P_{0A}P_{AA}^{-1}P_{A0}.
\end{equation*}
This is a real-linear operator on $\mathbb H$, represented
by a real $4\times4$ matrix in the basis $(1,u,v,z)$.
Lemma~\ref{lem:fixed-sector-polynomial-matrix} constructs the real
$4\times4$ polynomial matrix $M_{\mathrm{pol}}$, displayed in
equation~\eqref{eq:fixed-sector-matrix}.
Its entries belong to
$\mathbb Q[a_0,a_1,a_2,a_3]$, and we define
\begin{equation*}
  Q_{14}:=\det M_{\mathrm{pol}}
  \in\mathbb Q[a_0,a_1,a_2,a_3].
\end{equation*}
This polynomial definition applies at all coefficient points, including
$NN_3=0$. Appendix~\ref{app:fixed-sector-calculation} supplies the
determinant normalization and continuation argument used in the theorem below.

The explicit expression is shorter in the variables $(a_0,N,t)$,
where $t=x-y$. The change from $(a_0,x,y)$ is polynomial with
polynomial inverse over $\mathbb Q$:
\begin{equation*}
  x=\tfrac12(N-a_0^2+t),\qquad\text{and}\qquad y=\tfrac12(N-a_0^2-t).
\end{equation*}

\begin{theorem}[Determinant on the fixed sector]
  \label{thm:fixed-sector-determinant}
  Using notation as above:
  \begin{enumerate}
    \item[\textup{(i)}]
      The determinant of $P_\avec$ on $W_{\mathbf1}$
      is given by
      \begin{equation}
        \det(P_\avec|_{W_{\mathbf1}})=NQ_{14}.
        \label{eq:fixed-sector-determinant}
      \end{equation}
      On $NN_3\ne0$, the Schur complement
      $\mathcal S$ satisfies
      \begin{equation*}
        \begin{aligned}
          Q_{14}
            &=N^3N_3^2\det\nolimits_{\mathbb R}\mathcal S.
        \end{aligned}
      \end{equation*}

    \item[\textup{(ii)}]
      As an element of $\mathbb Q[x,y][a_0]$, the polynomial
      $Q_{14}(a_0,x,y)$ is monic of degree $14$ in $a_0$.
      For its explicit expression, write $N=a_0^2+x+y$ and $t=x-y$.
      Then:
\begin{equation}
\label{eq:fixed-sector-q14-ant}
\begin{aligned}
Q_{14}={}&256\,N(2a_0+3)^2\,t^2+q_1\,t+q_0,  \text{where}\\
q_1={}&N\bigl[96(a_0+1)N^3+32(26a_0^2+37a_0+23)N^2\\
&\quad+128(4a_0^3-36a_0^2-55a_0-16)N-1024a_0^2(2a_0+3)(2a_0+9)\bigr]\\
\text{and}\quad q_0={}&N^7+2(8a_0+9)N^6+3(2a_0+1)(6a_0+11)N^5\\
&-4(108a_0^3+192a_0^2+159a_0+92)N^4\\
&-32a_0(54a_0^3+29a_0^2-52a_0-30)N^3\\
&+256a_0(4a_0^4+74a_0^3+88a_0^2-21a_0-52)N^2\\
&+256a_0^2(7a_0+8)(4a_0^3+12a_0^2-31a_0-28)N\\
&-8192a_0^4(2a_0+3)(2a_0+7).
\end{aligned}
\end{equation}
  \end{enumerate}
\end{theorem}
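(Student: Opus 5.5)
The determinant will be computed by block elimination in the quaternionic coordinates of Proposition~\ref{prop:w1-normal-form}, on the open set $NN_3\neq0$, and then extended to all of $\mathbb R^4$ by polynomial continuation. Since $\Phi$ is a fixed real-linear isomorphism, $\det(P_\avec|_{W_{\mathbf1}})=\det(\Phi P_\avec\Phi^{-1})$, with no normalization factor from the metric weight $G$. Where $P_{AA}$ is invertible, the Schur formula gives
\[
\det(\Phi P_\avec\Phi^{-1})=\det P_{AA}\cdot\det\nolimits_{\mathbb R}\mathcal S=N^4N_3^2\det\nolimits_{\mathbb R}\mathcal S .
\]
The value $\det P_{AA}=N^4N_3^2$ follows from the splitting $\mathbb H^3=U_3\oplus U_0$: the rank-one block is $-R_{\hat q+3}$ with real determinant $|\hat q+3|^4=N_3^2$, and the rank-two block is $-R_{\hat q}$ with determinant $N^4$.

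Next I will show $N^3N_3^2\,\mathcal S=M_{\mathrm{pol}}$ up to a sign convention fixed in Lemma~\ref{lem:fixed-sector-polynomial-matrix}. For this I use the explicit inverse \eqref{eq:fixed-sector-aa-inverse}. On $U_0$, $P_{AA}^{-1}=-R_{\hat q^{-1}}=-N^{-1}R_{\bar{\hat q}}$. On $U_3$, it is $-N_3^{-1}R_{\overline{\hat q+3}}$. Sandwiching this between the linear maps $h_{0A}$ and $h_{A0}$ from \eqref{eq:fixed-sector-h-blocks} (with $h_{A0}=4h_{0A}^{\mathsf T}$) yields $\mathcal S$ with denominators dividing $NN_3$. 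Taking the real $4\times4$ determinant of $\mathcal S$ multiplies the denominator by the fourth power, so
\[
N^4N_3^2\det\mathcal S=N\cdot\bigl(N^3N_3^2\det\mathcal S\bigr).
\]
The bracketed factor is $\det M_{\mathrm{pol}}=Q_{14}$, provided the appendix shows that the cancellation of $N_3$ powers works out. Concretely, the $U_3$ contribution must enter $\mathcal S$ only through a rank-one piece whose determinant contributions carry at most $N_3^{-2}$. This gives both formulas in (i) on $NN_3\neq0$.

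Both sides of \eqref{eq:fixed-sector-determinant} are polynomials in $\avec$, and $\{NN_3\neq0\}$ is Zariski dense in $\mathbb R^4$. Hence the identity holds everywhere.

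For (ii), Proposition~\ref{prop:sector-invariants} lets me compute at the reduced representatives $\avec=(a_0,\sqrt x,0,\sqrt y)$, where $Q_{14}=D_{\mathbf1}/N$ is a polynomial in $(a_0,x,y)$. Indeed, $D_{\mathbf1}$ is divisible by $N$ as a polynomial, because $N$ is irreducible and $D_{\mathbf1}=N\det M_{\mathrm{pol}}$. I would then expand $\det M_{\mathrm{pol}}$ symbolically, substitute $x=\frac12(N-a_0^2+t)$ and $y=\frac12(N-a_0^2-t)$, and collect coefficients in $t$ to match \eqref{eq:fixed-sector-q14-ant}.

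Monicity follows by degree counting. $D_{\mathbf1}$ is the determinant of $h-a_0I-\cdots$ on a $16$-dimensional space, so its leading term in $a_0$ is $a_0^{16}$. Dividing by $N=a_0^2+\cdots$ leaves a monic polynomial of degree $14$. As a check, $q_0$ contributes $N^7\sim a_0^{14}$, and the $t$-terms have lower $a_0$-degree for fixed $x,y$.

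The main obstacle is the Schur-complement bookkeeping. Specifically, I need to verify that the $N$ and $N_3$ denominators clear exactly to give the stated $N^3N_3^2$ normalization, which requires the precise form of $h_{0A}$ relative to $U_3$ and $U_0$. I would first test this numerically at random rational points before trusting the symbolic expansion. The final matching of $q_0$ and $q_1$ is a mechanical computer-algebra check.
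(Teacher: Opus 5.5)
Your outline matches the paper's architecture: the Schur complement on $NN_3\ne0$ with $\det P_{AA}=N^4N_3^2$, density continuation, Proposition~\ref{prop:sector-invariants} for the $(a_0,x,y)$-dependence, monicity from the degree-$16$ count, and a symbolic check of $q_0,q_1$. However, the step that carries the content of (i) is missing. Since $Q_{14}$ is \emph{defined} as $\det M_{\mathrm{pol}}$, part (i) amounts to proving $\det M_{\mathrm{pol}}=N^3N_3^2\det\nolimits_{\mathbb R}\mathcal S$ on $NN_3\ne0$. You only assert this conditionally (``provided the appendix shows\dots'').

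Your matrix-level claim $N^3N_3^2\mathcal S=M_{\mathrm{pol}}$ is false: scaling a $4\times4$ matrix by $N^3N_3^2$ multiplies its determinant by $N^{12}N_3^8$. The paper's relation is $M_{\mathrm{pol}}=\Delta\,\mathcal S\,D^{-1}\Upsilon^{\mathsf T}D$ with $\Delta=\diag(1,N,N,N)$. Your heuristic for the $N_3$ exponent is also wrong. The $U_3$ contribution to $\mathcal S$ is not rank one; it is $\frac{4}{3N_3}D\Upsilon D$, where $\Upsilon$ is the matrix of right multiplication by $a_0+3+\mathbf m$, and this has full real rank four. A genuinely rank-one pole would contribute at most $N_3^{-1}$, and a generic rank-four pole up to $N_3^{-4}$. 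The exponent $2$ comes from the quaternionic identity $\Upsilon\Upsilon^{\mathsf T}=N_3I$: right multiplication by $D^{-1}\Upsilon^{\mathsf T}D$, of determinant $N_3^2$, turns this pole into the polynomial matrix $\frac43D^2$.

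The exponent $3$ of $N$ likewise needs a specific observation. The $1/N$ part of $\mathcal S$ is $\frac{16}{3N}B_N$, and $B_N$ has vanishing first row, so $\Delta$ clears it. Your remark that the determinant ``multiplies the denominator by the fourth power'' points to $N^4$, not $N^3$. Naive counting only shows that $N^4N_3^4\det\nolimits_{\mathbb R}\mathcal S$ is a polynomial, which does not yield $N\mid D_{\mathbf1}$.

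Your justification in (ii), that $N\mid D_{\mathbf1}$ ``because $N$ is irreducible and $D_{\mathbf1}=N\det M_{\mathrm{pol}}$'', is circular: that identity is exactly (i). To close the gap you need the pole decomposition
$\mathcal S=\mathcal S_{\mathrm{pol}}+\frac{16}{3N}B_N+\frac{4}{3N_3}D\Upsilon D$,
read off from the explicit Schur coefficients (the $\zeta_1$ and $\zeta_2$ terms, as in Lemma~\ref{lem:fixed-sector-polynomial-matrix}). Together with $\det\Upsilon=N_3^2$, this makes $M_{\mathrm{pol}}$ polynomial and gives $\det M_{\mathrm{pol}}=N^3N_3^2\det\nolimits_{\mathbb R}\mathcal S$. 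Alternatively, you could compare the $16\times16$ determinant with $N\det M_{\mathrm{pol}}$ by exact symbolic expansion. The numerical tests at random points that you propose only make the identity plausible; they do not prove it.
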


\begin{proof}
  Proposition~\ref{prop:fixed-sector-q14-determinant}
  gives the Schur normalization and global polynomial factorization in (i),
  including at
  $NN_3=0$. Proposition \ref{prop:sector-invariants} then implies that $Q_{14}$ is a polynomial in
  $(a_0,x,y)$. This proves (i) and the
  polynomial dependence asserted in (ii).
  The appendix evaluates the Schur determinant using
  Lemma~\ref{lem:diagonal-skew-determinant}; see also
  \cite[\S8, (9), pp.~106--107]{cohen-2000-quaternionic}.

  To check monicity, $P_\avec+a_0I$ is independent of $a_0$.
  Since $\dim_{\mathbb R}W_{\mathbf1}=16$,
  $\det(P_\avec|_{W_{\mathbf1}})$ is monic of
  degree $16$ in $a_0$. As
  $N=a_0^2+x+y$ is monic of degree $2$,
  the factorization in (i) shows that $Q_{14}$
  is monic of degree $14$ in $a_0$.

\end{proof}

The alternate expression~\eqref{eq:fixed-sector-q14-closed-form}
gives the same coefficients after $x,y$ are written in terms of $(a_0,N,t)$.
For the direct evaluation, substitute
$N=a_0^2+a_1^2+a_2^2+a_3^2$ and $t=a_1^2+a_2^2-a_3^2$
into~\eqref{eq:fixed-sector-q14-ant}.
Exact symbolic determinant calculation shows that the determinant of the
polynomial matrix~\eqref{eq:fixed-sector-matrix} minus this expression
expands to zero in $\mathbb Q[a_0,a_1,a_2,a_3]$.

\begin{definition}
We write
\begin{equation}
\label{eq:def-Z}
Z=\bigl\{\avec\in\mathbb R^4:Q_{14}(a_0,a_1^2+a_2^2,a_3^2)=0\bigr\}
\end{equation}
for the real zero set of $Q_{14}(a_0,x,y)$ in the coefficient space.
We also write $Q_{14}(\avec)$ for its pullback
$Q_{14}(a_0,a_1^2+a_2^2,a_3^2)$ when $\avec\in\mathbb R^4$.
\end{definition}

\subsection{The \texorpdfstring{$W_{\mathbf3}$}{W3} sector}
\label{subsec:three-block}

By Theorem~\ref{thm:sectors}, the sector $W_{\mathbf3}$ lies in $\hat B\otimes B$, hence entirely in the reversed half $W_-=\hat B\otimes V$. The right-multiplication term on this sector is $R_{\check q}$. First we determine the action of the right product and $h$ on elements of the form $w_{X,\gamma}$, as defined in \eqref{eq:epsilon}.

\begin{lemma}
  \label{lem:three-block-label-actions}
  For $X\in\su(2)$ and $\gamma,p\in\mathbb H$,
  \begin{equation*}
    R_pw_{X,\gamma}=w_{X,p\gamma}
    \qquad\text{and}\qquad
    hw_{X,\gamma}=w_{X,-4\Re\gamma}.
  \end{equation*}
\end{lemma}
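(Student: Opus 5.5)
The first identity follows from the Cayley--Dickson rules alone; the second I would reduce, by Schur's lemma, to computing a single real $4\times4$ matrix on the multiplicity space $\mathbb H$.

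\emph{The identity for $R_p$.} Since $R_p$ acts only on the octonion factor,
\[
R_pw_{X,\gamma}=\sum_\alpha\bigl(((X\hat\xi_\alpha)\gamma)p\bigr)\otimes\xi_\alpha .
\]
Each $X\hat\xi_\alpha$ lies in $\hat B$, so I would write $X\hat\xi_\alpha=r_\alpha\ell$ with $r_\alpha\in\mathbb H$. The Cayley--Dickson rule $(r\ell)q=(r\bar q)\ell$ for $q\in\mathbb H$, from \eqref{eq:cayley-dickson-rules} and already used in Proposition~\ref{prop:sign-twist}, applied twice gives
\[
((r\ell)\gamma)p=(r\bar\gamma\bar p)\ell=\bigl(r\,\overline{p\gamma}\bigr)\ell=(r\ell)(p\gamma).
\]
Hence $R_pw_{X,\gamma}=w_{X,p\gamma}$. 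This uses only associativity of $\mathbb H$, so this step is routine.

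\emph{Reducing $h$ to a matrix $T$.} Theorem~\ref{thm:sectors} says $h$ preserves $W_{\mathbf3}$ and commutes with $K$. The same theorem gives the $K$-equivariant isomorphism $\su(2)\otimes\mathbb H\cong W_{\mathbf3}$, with $K$ acting trivially on $\mathbb H$. Since $\su(2)$ is of real type, Schur's lemma gives a unique $T\in\End_{\mathbb R}(\mathbb H)$ with
\[
hw_{X,\gamma}=w_{X,T\gamma}\qquad\text{for all }X,\gamma .
\]
So it suffices to find $T$ by evaluating $h$ on $w_{X,\gamma}$ for one fixed nonzero $X$ and $\gamma\in\{1,u,v,z\}$. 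The target is $T=-4\Re$, which kills $\Im\mathbb H$ and sends $1\mapsto-4$.

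\emph{Computing $T$.} By Remark~\ref{rem:family}, $h_{ij}=\hat e_i\hat e_j=-\delta_{ij}+\varphi_{ijk}\hat e_k$. Therefore
\[
h(w)=-w+\sum_{i,j}\bigl((\hat e_i\times\hat e_j)\hat w_j\bigr)\otimes e_i ,
\]
so $T=-1+T'$, and the claim becomes $T'\gamma=-3\Re\gamma+\Im\gamma$. Only $j$ in $B$ contributes, because $w_{X,\gamma}\in\hat B\otimes B$. The output index $i$ may lie in $A$ or $B$; the $A$-part must vanish by invariance of $W_{\mathbf3}$, so only $i\in B$ needs computing.

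For $i,j\in B$, the rule $(\rho_\alpha\ell)(\rho_\beta\ell)=-\bar\rho_\beta\rho_\alpha$ puts $\hat\xi_\alpha\times\hat\xi_\beta$ in $\Im\mathbb H$. Multiplying this on the left of $(X\hat\xi_\beta)\gamma=(r_\beta\bar\gamma)\ell$ uses the rule $q(r\ell)=(rq)\ell$. For the explicit $X$, I would take the appendix description of $K$ on $\hat B$ (equation~\eqref{eq:frame-stabilizer-action}), presumably $r\ell\mapsto(sr)\ell$ with $s\in\Im\mathbb H$. Everything then becomes a quaternionic sum over $\alpha,\beta\in\{0,1,2,3\}$ of the form
\[
\sum_{\alpha,\beta}\bigl(r_\beta\bar\gamma\,(\Im\bar\rho_\beta\rho_\alpha)\bigr)\ell\otimes\xi_\alpha .
\]
Contracting it with the identity $\sum_\beta\bar\rho_\beta\,q\,\rho_\beta=-2\bar q$ (for $q\in\mathbb H$) should produce $-3\Re\gamma+\Im\gamma$ in the label. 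As a consistency check, the result can be compared with the $\mathbf{14}$ and $\mathbf{27}$ eigenvalues $-4$ and $0$ of Proposition~\ref{prop:real-spectrum}. Indeed $w_{X,1}$ is $X|_{\hat B}$, which should lie in $\Lambda^2_{14}$, matching $-4$, while imaginary $\gamma$ should give traceless symmetric pieces, matching $0$. This gives an alternative route to the same conclusion.

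The main obstacle is this last computation: the sign and index bookkeeping in $\sum\varphi_{ijk}\hat e_k\hat w_j$, done carefully enough to confirm both that the $A$-components cancel and that the imaginary labels are annihilated. I would try the representation-theoretic route first, identifying each $w_{X,\gamma}$ with an element of $\Lambda^2_{14}V$ or $S^2_0V$ via the End picture. I would fall back on the explicit quaternionic contraction only where that identification is unclear.
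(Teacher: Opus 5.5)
Your proof of $R_pw_{X,\gamma}=w_{X,p\gamma}$ is correct and is the paper's argument. Your Schur reduction to a label map $T$ is also valid. However, the second identity $hw_{X,\gamma}=w_{X,-4\Re\gamma}$ is never actually proved: both of your routes stop exactly where the content of the lemma lies. The representation-theoretic route you prefer is the paper's proof, but its two claims are only asserted: that $w_{X,1}$ lies in $\Lambda^2_{14}V$, and that imaginary $\gamma$ gives traceless symmetric tensors.

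The missing idea is the following. Under the metric identification $\hat B\otimes B\cong\End(\hat B)$, $w_{X,\gamma}$ corresponds to $R_\gamma\circ X|_{\hat B}$. Since $X$ is a derivation of $\mathbb O$ with $X\gamma=0$, one has $X(\hat\xi\gamma)=(X\hat\xi)\gamma$, so $X$ commutes with $R_\gamma$. For $\gamma\in\Im\mathbb H$, both $X$ and $R_\gamma$ are skew-adjoint on $\hat B$. Hence $R_\gamma X$ is self-adjoint and the tensor is symmetric. Its trace vanishes because $X\mapsto\operatorname{tr}(R_\gamma X)$ is an $\operatorname{Ad}(K)$-invariant linear functional on $\su(2)$. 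Concretely, if $X(r\ell)=(sr)\ell$ with $s\in\Im\mathbb H$, then $R_\gamma X$ is $r\ell\mapsto(sr\bar\gamma)\ell$, whose trace is $4\Re s\,\Re\gamma=0$. For $\gamma=1$ you also need that $X$ annihilates $\hat A$. Then $X|_{\hat B}$ extended by zero is the element $X\in\mathfrak g_2\subset\mathfrak{so}(V)$ itself, whose tensor lies in $\Lambda^2_{14}V$. The eigenvalues $-4$ and $0$ of Proposition~\ref{prop:real-spectrum} then give the lemma directly, with no need for $T$ or $T'$.

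Your explicit fallback also rests on a false identity. The sum $\sum_{\beta=0}^{3}\overline{\hat\rho_\beta}\,q\,\hat\rho_\beta$ equals $4\Re q$, not $-2\bar q$; the sum with value $-2\bar q$ in \eqref{eq:quaternion-sums} is $\sum_\beta\overline{\hat\rho_\beta}\,q\,\overline{\hat\rho_\beta}$. So the contraction as you planned it would not yield $-3\Re\gamma+\Im\gamma$. With the correct sums, and without splitting off $-\delta_{ij}$, this route closes in two lines. Write $X\hat\xi_\beta=(s\hat\rho_\beta)\ell$ with $s\in\Im\mathbb H$. The coefficient of $\xi_\alpha$ in $hw_{X,\gamma}$ is $-\bigl(s\,(\sum_\beta\hat\rho_\beta\bar\gamma\overline{\hat\rho_\beta})\,\hat\rho_\alpha\bigr)\ell=-4\Re\gamma\,X\hat\xi_\alpha$. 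The coefficient of $\rho_i\in A$ is $-\gamma\bigl(\sum_\beta\overline{\hat\rho_\beta}\,\bar s\,\hat\rho_\beta\bigr)\hat\rho_i=-4(\Re s)\,\gamma\hat\rho_i=0$.
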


\begin{proof}
  Recall from \eqref{eq:epsilon} that
  \[
    w_{X,\gamma}
      =\sum_{\alpha=0}^3
        ((X\hat\xi_\alpha)\gamma)\otimes\xi_\alpha.
  \]
  Since $X$ preserves $\hat B$, the Cayley--Dickson
  identity $(\hat\xi\gamma)p=\hat\xi(p\gamma)$
  from \eqref{eq:complement-products} gives
  \begin{align*}
    R_pw_{X,\gamma}
      &=\sum_{\alpha=0}^3
        (((X\hat\xi_\alpha)\gamma)p)\otimes\xi_\alpha\\
      &=\sum_{\alpha=0}^3
        ((X\hat\xi_\alpha)(p\gamma))\otimes\xi_\alpha =w_{X,p\gamma}.
  \end{align*}

  To compute the action of $h$, identify
  $\hat B\otimes B$ with
  $\operatorname{End}_{\mathbb R}(\hat B)$
  using the paired orthonormal bases
  $(\hat\xi_\alpha)$ and $(\xi_\alpha)$:
  a tensor $a\otimes\xi_\alpha$ corresponds to
  the map $\hat\xi_\beta\mapsto\delta_{\alpha\beta}a$.
  Thus $w_{X,\gamma}$ corresponds to the endomorphism
  \[
    T_{X,\gamma}=R_\gamma X
    \qquad \text{with} \qquad
    T_{X,\gamma}(\hat\xi_\alpha)
      =(X\hat\xi_\alpha)\gamma.
  \]
  Under the fixed identification $\hat V\cong V$,
  this endomorphism, extended by zero on $A$,
  represents the corresponding tensor in $V\otimes V$. By real linearity, it suffices to consider $\gamma=1$ and $\gamma\in\Im\mathbb H$.

  For $\gamma=1$, we have $T_{X,1}=X$.
  Since $X\in\su(2)\subset\mathfrak g_2$ acts
  as a skew-adjoint endomorphism of $V$,
  its corresponding alternating $2$-tensor lies in
  $\Lambda^2_{14}V$. Hence, using the real spectrum of $h$ from Proposition \ref{prop:real-spectrum},
  \[
    hw_{X,1}=-4w_{X,1}.
  \]

  Now suppose $\gamma\in\Im\mathbb H$.
  Both $X$ and $R_\gamma$ are skew-adjoint
  on $\hat B$. Since $X$ is a derivation
  vanishing on $\mathbb H$,
  \[
    X(\hat\xi\gamma)=(X\hat\xi)\gamma,
  \]
  so $X$ commutes with $R_\gamma$.
  Consequently, $T_{X,\gamma}=R_\gamma X$
  is self-adjoint and therefore corresponds
  to a symmetric $2$-tensor. This tensor is also trace-free. Indeed, for
  fixed $\gamma$, the functional
  \[
    X\longmapsto\operatorname{tr}(R_\gamma X)
  \]
  is $\operatorname{Ad}(K)$-invariant, since
  $R_\gamma$ commutes with $K$. As $\su(2)$
  has no nonzero invariant linear functionals,
  the trace vanishes. Thus
  $w_{X,\gamma}\in S^2_0V$, where $h$ acts
  by zero (Section~\ref{sec:one-step-spectrum}). 
  
  Combining the two cases gives
  \[
    hw_{X,\gamma}=w_{X,-4\Re\gamma}
  \]
  for every $\gamma\in\mathbb H$.

\end{proof}

\begin{proposition}
  \label{prop:three-block-reduction}
  \leavevmode
  \begin{enumerate}
    \item[\textup{(i)}]
     On \(W_{\mathbf{3}}\), the two-step pencil agrees with the restriction of the one-step pencil \(h-R_{\check q}\). For a pair with reduced point \(\mathbf{a}\), this is the one-step comparison operator at \(\check q=\lambda_2\lambda_1\).

    \item[\textup{(ii)}]
      Under the $K$-equivariant identification
      $\su(2)\otimes\mathbb H\cong W_{\mathbf3}$
      given by $X\otimes\gamma\mapsto w_{X,\gamma}$
      in \eqref{eq:epsilon},
      \begin{equation*}
        P_\avec|_{W_{\mathbf3}}
          =I_{\su(2)}\otimes P_{\mathbf3},
        \qquad \text{where} \qquad
        P_{\mathbf3}(\gamma)
          =-4\Re\gamma-\check q\gamma.
      \end{equation*}
  \end{enumerate}
\end{proposition}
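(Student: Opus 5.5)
The plan is to combine the half-decomposition of Proposition~\ref{prop:halves} with the label actions of Lemma~\ref{lem:three-block-label-actions}. For (i), recall from Theorem~\ref{thm:sectors}(b) that $W_{\mathbf3}\subset W_-$, so $\Pi_-$ restricts to the identity on $W_{\mathbf3}$ and $\Pi_+$ to zero. Restricting \eqref{eq:halves-identity} to $W_{\mathbf3}$ therefore gives
\[
P_\avec|_{W_{\mathbf3}}=(h-R_{\check q})|_{W_{\mathbf3}}.
\]
This is the restriction of the one-step comparison operator $h-R_{\check q}$. The sector is invariant both under $P_\avec$ (Theorem~\ref{thm:sectors}) and under $h-R_{\check q}$: the latter commutes with $K$ because $h$ is $\Gtwo$-equivariant and $\check q\in\mathbb H$ is fixed by $K$. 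Hence the restriction is well defined as an operator on $W_{\mathbf3}$. For a pair with reduced point $\avec$, Proposition~\ref{prop:general-reduction}(2), or equivalently Proposition~\ref{prop:halves}(b), identifies $\check q=\lambda_2\lambda_1$. This gives the interpretation of (i).

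For (ii), I evaluate $h-R_{\check q}$ on the spanning tensors $w_{X,\gamma}$. Lemma~\ref{lem:three-block-label-actions} gives
\[
R_{\check q}w_{X,\gamma}=w_{X,\check q\gamma}\qquad\text{and}\qquad hw_{X,\gamma}=w_{X,-4\Re\gamma}.
\]
Hence $P_\avec w_{X,\gamma}=w_{X,-4\Re\gamma-\check q\gamma}=w_{X,P_{\mathbf3}(\gamma)}$. The map $(X,\gamma)\mapsto w_{X,\gamma}$ is real bilinear, since $X\hat\xi_\alpha$ is linear in $X$ and octonion multiplication is bilinear. It therefore induces the linear map on $\su(2)\otimes\mathbb H$, which Theorem~\ref{thm:sectors}(b) shows is an isomorphism onto $W_{\mathbf3}$. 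Transporting $P_\avec$ through this isomorphism, the identity above says that it sends each pure tensor $X\otimes\gamma$ to $X\otimes P_{\mathbf3}(\gamma)$. Pure tensors span $\su(2)\otimes\mathbb H$, so the transported operator equals $I_{\su(2)}\otimes P_{\mathbf3}$.

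There is no real obstacle, since the needed identities are already in Lemma~\ref{lem:three-block-label-actions}. The one point to check carefully is the order of multiplication. The Cayley--Dickson rule gives $R_p w_{X,\gamma}=w_{X,p\gamma}$, with $p$ on the left of the label $\gamma$. This is why the formula reads $\check q\gamma$ rather than $\gamma\check q$, and why $P_{\mathbf3}$ is real-linear but not right $\mathbb H$-linear. I would state this explicitly, and note that $-4\Re\gamma$ is the real scalar $-4\Re\gamma$ viewed as an element of $\mathbb H$.
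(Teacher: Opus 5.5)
Your proposal is correct and follows essentially the same route as the paper: part (i) restricts the halves identity \eqref{eq:halves-identity} to $W_{\mathbf3}\subset W_-$ and identifies $\check q=\lambda_2\lambda_1$ via Proposition~\ref{prop:general-reduction}, and part (ii) comes directly from the label actions of Lemma~\ref{lem:three-block-label-actions}. Your additional details, namely that $h-R_{\check q}$ preserves $W_{\mathbf3}$ because it commutes with $K$, and that the formula passes from pure tensors to all of $\su(2)\otimes\mathbb H$ by bilinearity, are sound points that the paper leaves implicit.
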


\begin{proof}
  Since $\hat B\otimes V\subset W_-$,
  part~(i) follows from Proposition~\ref{prop:halves},
  with $\check q=\lambda_2\lambda_1$ for a reduced
  pair by Proposition~\ref{prop:general-reduction}.
  The explicit action in part~(ii) follows from
  Lemma~\ref{lem:three-block-label-actions} above.
\end{proof}

We can now compute the determinant of the pencil on the sector $W_{\mathbf3}$.

\begin{theorem}
  \label{thm:three-block-determinant}
  Recall from Proposition~\ref{prop:three-block-reduction}
  that $P_\avec|_{W_{\mathbf3}}$ is identified with
  $I_{\su(2)}\otimes P_{\mathbf3}$, where
  $P_{\mathbf3}:\mathbb H\to\mathbb H$ is the
  real-linear map $P_{\mathbf3}(\gamma) =-4\Re\gamma-\check q\gamma.$ Then:
  \begin{enumerate}
    \item[\textup{(i)}]
      The determinant satisfies the polynomial identity
      \begin{equation}
        \det\nolimits_{\mathbb R} (P_\avec|_{W_{\mathbf3}})
          =p_{\mathbf3}^3,
        \qquad \text{where} \qquad
        p_{\mathbf3}
          =N(N+4a_0)
          =|\check q|^2\bigl(|\check q+2|^2-4\bigr).
        \label{eq:three-block-determinant}
      \end{equation}
      In particular, $p_{\mathbf3}$ depends on $\avec$
      only through $a_0$ and $x+y$.

    \item[\textup{(ii)}]
      For $\avec \in\mathbb R^4$, the restriction
      $P_\avec|_{W_{\mathbf3}}$ is singular precisely
      on the round $3$-sphere
      \begin{equation*}
        S=\left\{\avec\in\mathbb R^4:
          (a_0+2)^2+a_1^2+a_2^2+a_3^2=4
        \right\}.
      \end{equation*}
      This sphere contains the origin and lies in
      $-4\le a_0\le0$.

    \item[\textup{(iii)}]
      The kernel of $P_{\mathbf3}$ is
      \begin{equation*}
        \ker P_{\mathbf3}=
        \begin{cases}
          \{0\}, & \avec\notin S,\\
          \mathbb R\overline{\check q},
            & \avec\in S\setminus\{0\},\\
          \Im\mathbb H, & \avec=0.
        \end{cases}
      \end{equation*}
      Under the identification
      $W_{\mathbf3}\cong\su(2)\otimes\mathbb H$,
      the kernel of $P_\avec|_{W_{\mathbf3}}$
      corresponds to
      \[
        \su(2)\otimes\ker P_{\mathbf3},
      \]
      and therefore has real dimension $0$, $3$,
      or $9$, respectively.
  \end{enumerate}
\end{theorem}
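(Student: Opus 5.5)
The whole statement reduces to a $4\times4$ computation on $\mathbb H$. By Proposition~\ref{prop:three-block-reduction}, $P_\avec|_{W_{\mathbf3}}=I_{\su(2)}\otimes P_{\mathbf3}$ with $\dim\su(2)=3$. Hence $\det_{\mathbb R}(P_\avec|_{W_{\mathbf3}})=(\det_{\mathbb R}P_{\mathbf3})^3$ and $\ker(P_\avec|_{W_{\mathbf3}})=\su(2)\otimes\ker P_{\mathbf3}$. This already gives the kernel description in (iii) and the dimensions $0,3,9$, once we know $\dim\ker P_{\mathbf3}\in\{0,1,3\}$.

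\textbf{The determinant.} Write $P_{\mathbf3}=-L_{\check q}-4\pi_0$, where $\pi_0(\gamma)=\Re\gamma$ is the rank-one projection onto $\mathbb R1$. Apply the matrix determinant lemma to this rank-one perturbation of $-L_{\check q}$. We have $\det(-L_{\check q})=|\check q|^4=N^2$, since $L_{\check q}$ is $|\check q|$ times an orthogonal map in $SO(4)$. For $\check q\ne0$, $(L_{\check q})^{-1}=L_{\check q^{-1}}$, and
\[
\det P_{\mathbf3}=N^2\bigl(1+4\Re(\check q^{-1})\bigr)=N^2\Bigl(1+\frac{4a_0}{N}\Bigr)=N(N+4a_0),
\]
using $\Re\check q^{-1}=\Re\overline{\check q}/N=a_0/N$. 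Both sides are polynomials in $\avec$, so the identity extends to $N=0$. Expanding $|\check q+2|^2-4=N+4a_0$ gives the second expression for $p_{\mathbf3}$, and $N=a_0^2+x+y$ shows that $p_{\mathbf3}$ depends only on $a_0$ and $x+y$. This proves (i).

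\textbf{The singular locus.} Over $\mathbb R$, $N=0$ only at $\avec=0$. The locus $N+4a_0=0$ is exactly $(a_0+2)^2+a_1^2+a_2^2+a_3^2=4$, which contains $0$. So the singular set is $S$. On $S$ we have $(a_0+2)^2\le4$, hence $-4\le a_0\le0$. This proves (ii).

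\textbf{The kernel.} This is the only step needing care. The equation $P_{\mathbf3}\gamma=0$ reads $\check q\gamma=-4\Re\gamma$, so $\check q\gamma$ is real.
\begin{itemize}
\item If $\avec=0$, then $P_{\mathbf3}\gamma=-4\Re\gamma$, whose kernel is $\Im\mathbb H$.
\item If $\check q\ne0$, then $\gamma=c\,\check q^{-1}=(c/N)\overline{\check q}$ for some real $c$, so the kernel is at most one-dimensional. Substituting $\gamma=\overline{\check q}$ gives $\check q\overline{\check q}=N$ and $-4\Re\overline{\check q}=-4a_0$. Thus $\overline{\check q}$ lies in the kernel exactly when $N=-4a_0$, i.e.\ when $\avec\in S$, and the kernel is then $\mathbb R\overline{\check q}$.
\end{itemize}
This completes (iii). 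The computation is direct, and the only delicate point is the degenerate case $\check q=0$, which is handled separately above.
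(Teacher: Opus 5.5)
Your proposal is correct and follows essentially the same route as the paper. Both write $P_{\mathbf3}=-(L_{\check q}+4e_0e_0^{\mathsf T})$, evaluate the rank-one perturbation as $N^2(1+4a_0/N)$, and extend to $N=0$ by polynomial identity; the kernel argument is likewise the same, using $\gamma\in\mathbb R\overline{\check q}$ with $P_{\mathbf3}(\overline{\check q})=-(N+4a_0)$, the separate origin case, and tensoring with $\su(2)$.
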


\begin{proof}
   In the real basis $(1,u,v,z)$ of $\mathbb H$,
  let $e_0=(1,0,0,0)^{\mathsf T}$.
  Since $e_0e_0^{\mathsf T}$ represents the projection
  $\gamma\mapsto\Re\gamma$, we have
  \[
    P_{\mathbf3}=-(L_{\check q}+4e_0e_0^{\mathsf T}).
  \]
  For $N=|\check q|^2\ne0$, left multiplication
  by $\check q$ satisfies
  \[
    \det\nolimits_{\mathbb R} L_{\check q}=N^2
    \qquad\text{and}\qquad
    L_{\check q}^{-1}=\frac1N L_{\overline{\check q}}.
  \]
  In particular,
  $e_0^{\mathsf T}L_{\check q}^{-1}e_0=a_0/N$.
  We can thus compute the determinant as:
  \begin{align*}
    \det\nolimits_{\mathbb R} P_{\mathbf3}
      &=N^2\left(1+4e_0^{\mathsf T}
          L_{\check q}^{-1}e_0\right)\\
      &=N^2\left(1+\frac{4a_0}{N}\right)=N(N+4a_0).
  \end{align*}
  Both sides are polynomials in $\avec$, so the
  identity extends to $N=0$. Since
  $P_\avec|_{W_{\mathbf3}}=I_{\su(2)}\otimes P_{\mathbf3}$
  and $\dim_{\mathbb R}\su(2)=3$, its determinant
  is $p_{\mathbf3}^3$, proving (i).

  For real $\avec\in\mathbb R^4$, $N=0$ only at the origin.
  Moreover,
  \[
    N+4a_0
      =(a_0+2)^2+a_1^2+a_2^2+a_3^2-4.
  \]
  Thus the zero set of $p_{\mathbf3}$ is exactly
  the sphere $S$, proving (ii). Finally, the kernel equation is
  \begin{equation*}
    \check q\gamma=-4\Re\gamma.
  \end{equation*}
  If $\check q\ne0$, multiplication by
  $\check q^{-1}=\overline{\check q}/N$ gives
  \[
    \gamma=-\frac{4\Re\gamma}{N}\,
      \overline{\check q}.
  \]
  Hence every kernel element is a real multiple
  of $\overline{\check q}$. Conversely,
  \[
    P_{\mathbf3}(\overline{\check q})
      =-(N+4a_0),
  \]
  so this line is the kernel precisely when
  $\avec\in S\setminus\{0\}$.

  At the origin, $\check q=0$, and
  $P_{\mathbf3}(\gamma)=-4\Re\gamma$, whose kernel
  is $\Im\mathbb H$. The corresponding kernels
  on $W_{\mathbf3}$ follow by tensoring with
  $\su(2)$, giving the stated dimensions.
\end{proof}

Thus $S$ is the one-step circle condition $|\check q+2|=2$
at the reversed product (Corollary~\ref{cor:general-pairs}).

\subsection{The \texorpdfstring{$W_{\mathbf4}$}{W4} sector}
\label{subsec:four-block}

The sector $W_{\mathbf4}=(\mathbb H\otimes B)\oplus(\hat B\otimes A)$
is the direct sum of seven copies of the four-dimensional real
$K$-module of quaternionic type. We first choose equivariant coordinates
in which the multiplication part of $P_\avec$ acts by the same quaternion
$\hat q$ on all seven slots. We then identify the resulting quaternionic
matrix and compute its reduced norm. Finally, a sum of squares determines
its real zero set.

In the original halves, the multiplication term involves $\hat q$ on
$W_+$ and $\check q$ on $W_-$. The coordinates below conjugate the action
on the second part to the common row action $\hat q I_7$; the coupling
between the halves remains in the off-diagonal blocks of $M_h$.

For $\gamma\in\mathbb H$ and $\hat\xi\in\hat B$, define
\[
  \Psi_\gamma(\hat\xi)
    =\tfrac12\sum_{i=0}^{3}\hat\rho_i\otimes
      \bigl((\overline{\hat\rho_i}\gamma)\hat\xi\bigr),
\]
where the second factor is in $B\subset V$. For
$f=(f_0,\dots,f_6)\in\hat B^7$, define
\begin{equation}
  \mathcal J(f)=\sum_{k=0}^{3}\Psi_{\hat\rho_k}(f_k)
    +\sum_{j=1}^{3}(f_{3+j}z)\otimes\rho_j.
  \label{eq:four-sector-coordinates}
\end{equation}
This is an isometric, $K$-equivariant isomorphism; the proof below also
shows that $\Psi_\gamma$ is independent of the orthonormal basis of
$\mathbb H$ used in its sum.

Fix a unit $\ell\in\hat B$ and write $f_k=s_k\ell$ with
$\svec=(s_0,\dots,s_6)\in\mathbb H^7$.
For an equivariant operator $T$, its matrix $\mathcal M_T$ is specified by
$T\mathcal J(f)=\mathcal J(f')$, where
$f'_k=\sum_{i=0}^{6}(\mathcal M_T)_{ik}f_i$.
A quaternion entry acts on $\hat B$ by left multiplication. Since
$\gamma(s\ell)=(s\gamma)\ell$, the induced coordinate action is
$T_{\mathcal M_T}(\svec)=\svec\mathcal M_T$.
Thus $T_{\mathcal N}\circ T_{\mathcal M}=T_{\mathcal M\mathcal N}$:
$uI_7$ followed by $vI_7$ gives $zI_7$.

In these coordinates the full matrix of $h$ is
\begin{equation}
  \begin{gathered}
    M_h=\begin{pmatrix}M_{++}&M_{+-}\\ M_{-+}&M_{--}\end{pmatrix},\\
    M_{++}=\diag(2,-2,-2,-2)\qquad\text{and}\qquad M_{-+}=M_{+-}^*,\\
    M_{+-}=\begin{pmatrix}
      -v&u&-1\\
      z&-1&-u\\
      1&z&-v\\
      u&v&z
    \end{pmatrix}\qquad\text{and}\qquad
    M_{--}=\begin{pmatrix}
      -1&-z&-v\\
      z&-1&u\\
      v&-u&-1
    \end{pmatrix}
  \end{gathered}
  \label{eq:four-sector-matrix}
\end{equation}
where $M_{+-}^*$ is the conjugate transpose. Equivalently, its indexed
entries are $(M_{+-})_{k,3+j}=\hat\rho_j\overline{\hat\rho_k}z$
and $(M_{--})_{3+i,3+j}=\bar z\hat\rho_j\hat\rho_i z$, for
$k=0,\dots,3$ and $i,j=1,2,3$. The first four slots correspond to $\mathbb H\otimes B\subset W_+$ and
the last three to $\hat B\otimes A\subset W_-$, whence the subscripts.

For an equivariant operator $T$ represented by
$\mathcal M\in M_7(\mathbb H)$, its real determinant is
\begin{equation*}
  \det\nolimits_{\mathbb R}T=\Nrd(\mathcal M)^2
  \qquad\text{where}\qquad
  \Nrd(\mathcal M)=\det\nolimits_{\mathbb C}\chi(\mathcal M),
\end{equation*}
where $\Nrd$ denotes the quaternionic reduced norm
and $\chi:M_7(\mathbb H)\to M_{14}(\mathbb C)$
is induced by a unital real-algebra embedding
$\mathbb H\hookrightarrow M_2(\mathbb C)$.
The reduced norm is real, nonnegative, and independent
of the choice of embedding. It is the Study determinant
$\operatorname{Sdet}$ of Aslaksen, and
$\det\nolimits_{\mathbb R}=\operatorname{Sdet}^2$
\cite[Theorems~5 and~9, and the remark on p.~63]{aslaksen-1996-quaternionic}.
For definition of $\chi$, its multiplicativity, and the nonnegativity of $\det\nolimits_{\mathbb C}\chi$,
see \cite[Theorems~4.2 and~8.1, Proposition~4.2 and its corollary]{zhang-1997-quaternions}.

We use the explicit embedding
\begin{equation}
  \chi(\alpha+\beta u+\gamma v+\varepsilon z)=
  \begin{pmatrix}
    \alpha+i\beta&\gamma+i\varepsilon\\
    -\gamma+i\varepsilon&\alpha-i\beta
  \end{pmatrix},
  \qquad\text{for }\alpha,\beta,\gamma,\varepsilon\in\mathbb R,
  \label{eq:app-c-complex-embedding}
\end{equation}
and apply $\chi$ entrywise to quaternionic matrices.

\begin{proposition}
  \label{prop:four-block-form}
  \leavevmode
  \begin{enumerate}
    \item[\textup{(i)}]
      Under $\mathcal J$, the algebra of $K$-equivariant operators on
      $W_{\mathbf4}$ is $M_7(\mathbb H)$, acting on quaternionic row vectors
      from the right.
    \item[\textup{(ii)}]
      The operators $R_u$, $R_v$ and $R_vR_u$ are represented by
      $uI_7$, $vI_7$ and $zI_7$, respectively, and $h$ by the
      quaternion-Hermitian matrix~\eqref{eq:four-sector-matrix}.
      Consequently $P_\avec|_{W_{\mathbf4}}$ is represented by
      $M_h-\hat qI_7$, and
      \[
        \det\nolimits_{\mathbb R}(P_\avec|_{W_{\mathbf4}})
          =\Nrd(M_h-\hat qI_7)^2.
      \]
  \end{enumerate}
\end{proposition}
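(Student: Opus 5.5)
The plan is to establish the coordinate map $\mathcal J$ of \eqref{eq:four-sector-coordinates} first. After that, (i) follows from Theorem~\ref{thm:sectors}(d), and (ii) reduces to a finite list of evaluations.

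\emph{Step 1: $\mathcal J$ is an isometric $K$-equivariant isomorphism.} First I would show that $\Psi_\gamma$ does not depend on the orthonormal basis of $\mathbb H$. The tensor $\tfrac12\sum_i\hat\rho_i\otimes(\overline{\hat\rho_i}\,\cdot\,)$ is the image of an $O(4)$-invariant element of $\mathbb H\otimes\mathbb H^*$, so it is basis independent. Equivariance follows because $k\in K$ fixes $\mathbb H$ pointwise and is an automorphism. This gives $k\bigl((\overline{\hat\rho_i}\gamma)\hat\xi\bigr)=(\overline{\hat\rho_i}\gamma)(k\hat\xi)$, and also $k(f z)=(kf)z$. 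Hence $\mathcal J(kf)=k\mathcal J(f)$, where $K$ acts slotwise on $\hat B^7$.

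Next, the images of the two sums lie in $\mathbb H\otimes B$ and $\hat B\otimes A$ respectively. For the isometry I would compute $\|\Psi_{\hat\rho_k}(f_k)\|^2=\tfrac14\sum_i|f_k|^2=|f_k|^2$. Mutual orthogonality of the $\Psi_{\hat\rho_k}(f_k)$ for different $k$ reduces to $\sum_i\langle\overline{\hat\rho_i}\hat\rho_k,\overline{\hat\rho_i}\hat\rho_{k'}\rangle=4\delta_{kk'}$. Since $\dim\hat B^7=28=\dim W_{\mathbf4}$, $\mathcal J$ is an isomorphism onto $W_{\mathbf4}$. Part (i) then follows from Theorem~\ref{thm:sectors}(d). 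Writing $f_k=s_k\ell$ and using $\gamma(s\ell)=(s\gamma)\ell$ from \eqref{eq:complement-products}, left multiplication by matrix entries turns into the row action $\svec\mapsto\svec\mathcal M$.

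\emph{Step 2: the right multiplications.} Since $R_vR_u=R_z$ on $W_+$ and $-R_z$ on $W_-$ (Proposition~\ref{prop:sign-twist}), it suffices to treat $R_p$ for $p\in\{u,v\}$ and $R_z$, separately on each half. On $\mathbb H\otimes B$, $R_p$ acts on the first factor $\hat\rho_i$. I would reindex the sum with the orthonormal basis $\hat\rho_i'=\hat\rho_i p$ for unit $p$ and use basis independence to rewrite $R_p\Psi_\gamma(\hat\xi)$ as $\Psi_{\gamma}$ of a modified vector. With $\hat\xi=s\ell$ and the Cayley--Dickson rules, this should amount to $s\mapsto sp$ in slot coordinates.

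On $\hat B\otimes A$, the rules give $((s\ell)z)p=\pm((\cdot)\ell)$. The conjugation by $z$ built into \eqref{eq:four-sector-coordinates} is designed to turn the $\check q$ action into $s\mapsto sp$, with the sign flip of $R_z$ on $W_-$ absorbed; this is the same mechanism as the change of variable $\eta=z\gamma\bar z$ in Proposition~\ref{prop:w1-normal-form}. Thus $R_u,R_v,R_vR_u$ become $uI_7,vI_7,zI_7$, and $P_\avec|_{W_{\mathbf4}}$ is represented by $M_h-\hat qI_7$.

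\emph{Step 3: the matrix of $h$ and the determinant.} By (i), $h|_{W_{\mathbf4}}$ is some $\mathcal M_h\in M_7(\mathbb H)$. By right $\mathbb H$-linearity it is determined by the seven vectors $h\mathcal J(\ell e_k)$. For each $k$ I would expand \eqref{eq:h-definition} in the frame basis and read off the coefficient of $\mathcal J(\ell e_i)$, checking these against the stated closed forms $\hat\rho_j\overline{\hat\rho_k}z$ and $\bar z\hat\rho_j\hat\rho_iz$. As a check, I would compare eigenvalues with Proposition~\ref{prop:real-spectrum}; for instance, the diagonal $M_{++}=\diag(2,-2,-2,-2)$ and the $\mathbf{14},\mathbf{27}$ contributions should be consistent. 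Hermitian symmetry, and in particular $M_{-+}=M_{+-}^*$, follows from self-adjointness of $h$ (Proposition~\ref{prop:equivariance}) because $\mathcal J$ is an isometry. Finally, $\det_{\mathbb R}=\Nrd^2$ is the cited result of Aslaksen and Zhang applied to the right row action.

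The main obstacle is Step 3: computing the $4\times3$ block $M_{+-}$ entry by entry with the correct Cayley--Dickson signs. I would first verify one entry symbolically, then derive the general formula using $\Gamma$-equivariance from Theorem~\ref{thm:sectors} to permute $u,v,z$.
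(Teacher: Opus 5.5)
Your plan is viable. It is organized differently from the paper's proof, and three local steps need repair.

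\textbf{Comparison of routes.} You take $\mathcal J$ as given and verify everything directly in its coordinates.
\begin{itemize}
\item Your reindexing $\hat\rho_i\mapsto\hat\rho_ip$ on $\mathbb H\otimes B$ is correct and neat. It gives $R_p\Psi_\gamma(s\ell)=\Psi_\gamma((sp)\ell)$.
\item Your Step~2 on $\hat B\otimes A$ also works. Indeed $\bar z\bar pz=p$ for $p=u,v$, and the sign of $R_vR_u=-R_z$ on $W_-$ is absorbed as you say.
\end{itemize}
The paper proceeds differently. It starts from raw tensor coordinates $\boldsymbol\omega$, where each block of $h$ is a single Cayley--Dickson evaluation (e.g.\ $(M'_h)_{ik}=-\hat\rho_k\overline{\hat\rho_i}$). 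In those coordinates $R_p$ is a real matrix on the first four slots and $\omega\mapsto\omega\bar p$ on the last three. It then obtains $\mathcal J$ as the unitary change $\boldsymbol\eta=\boldsymbol\omega\,\diag(T,zI_3)$ with $T_{ik}=\tfrac12\overline{\hat\rho_k}\hat\rho_i$. Conjugating by this matrix and using the sums \eqref{eq:quaternion-sums} yields every closed form in \eqref{eq:four-sector-matrix} at once, together with isometry and the common action $\hat qI_7$. This is exactly what removes the obstacle you flag in Step~3. In your direct version, each row requires expanding $h\Psi_{\hat\rho_k}(\ell)$, a four-term sum, and then inverting $\mathcal J$.

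\textbf{First repair: orthogonality.} Your reduction of the orthogonality of the $\Psi_{\hat\rho_k}(f_k)$ does not suffice. The identity $\sum_i\langle\overline{\hat\rho_i}\hat\rho_k,\overline{\hat\rho_i}\hat\rho_{k'}\rangle=4\delta_{kk'}$ holds term by term, because left multiplication by a unit quaternion is isometric. It only controls the case $f_k=f_{k'}$. For $f_k=s\ell$ and $f_{k'}=t\ell$, the cross term is $\tfrac14\Re\bigl(s\bigl(\sum_i\overline{\hat\rho_i}\,\hat\rho_k\overline{\hat\rho_{k'}}\,\hat\rho_i\bigr)\bar t\bigr)$. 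So you need the quaternion identity $\sum_i\overline{\hat\rho_i}\,r\,\hat\rho_i=4\Re r$ with $r=\hat\rho_k\overline{\hat\rho_{k'}}$, which is precisely the unitarity of $T$.

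\textbf{Second repair: the $\Gamma$ shortcut.} $\Gamma$ cannot permute $u,v,z$. It preserves $\mathrm{span}\{u,v\}$ and sends $z\mapsto\pm z$. Up to an entrywise inner automorphism and a signed slot permutation, it only relates entries differing by $u\leftrightarrow v$. The entries indexed by $z$ (column $j=3$ and row $k=3$ of $M_{+-}$) are never reached from the others and must be computed directly.

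\textbf{Third repair: the determinant.} You should not simply cite $\det_{\mathbb R}(\svec\mapsto\svec\mathcal M)=\Nrd(\mathcal M)^2$. The cited results use column action, and $\Nrd(\mathcal M^{\mathsf T})\neq\Nrd(\mathcal M)$ in general, which is exactly the pitfall the paper warns about. One fix is to argue as the paper does. The row action is $\mathbb C$-linear for left multiplication by $\mathbb R\oplus\mathbb Ru$, with complex determinant $\det\chi(\mathcal M)$. This determinant is real by the quaternionic structure and independent of the embedding by Skolem--Noether. The other fix is to pass to $\bar\svec$ and use $\operatorname{Sdet}\mathcal M^*=\operatorname{Sdet}\mathcal M$.
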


\begin{proof}
  \emph{Determinant convention.}
  The equivariant identification with $\hat B^7$ gives the commutant
  $M_7(\mathbb H)$ by Theorem~\ref{thm:sectors}(d), with the row action
  just described. This proves (i).

  For a matrix $\mathcal M$, the row map
  $T_{\mathcal M}:\svec\mapsto\svec\mathcal M$ commutes with left multiplication by
  $\mathbb C=\mathbb R\oplus\mathbb Ru$, so it is $\mathbb C$-linear on
  $\mathbb H^7\cong\mathbb C^{14}$. Its complex matrix is the transpose of the
  entrywise image $\chi'(\mathcal M)$ under a unital $\mathbb R$-algebra embedding
  $\chi'\colon\mathbb H\to M_2(\mathbb C)$, so its complex determinant is
  $\det\chi'(\mathcal M)$.

  Any two unital $\mathbb R$-algebra embeddings
  $\mathbb H\to M_2(\mathbb C)$ are conjugate, by the Skolem--Noether theorem for
  $\mathbb H\otimes\mathbb C\cong M_2(\mathbb C)$; hence
  $\det\chi'(\mathcal M)=\det\chi(\mathcal M)$, and the choice of $\chi$ does not
  matter.

  This determinant is real because $\chi(\mathcal M)$ commutes with a
  conjugate-linear map $J$ with $J^2=-1$. Finally, the real determinant
  of a $\mathbb C$-linear map is the squared modulus of its complex determinant,
  so $\det\nolimits_{\mathbb R}T_{\mathcal M}=\Nrd(\mathcal M)^2$, since $\Nrd(\mathcal M)$ is real.

  \emph{Temporary coordinates.}
  To prove (ii), we first compute in tensor-product coordinates and then
  make a unitary change that sends the multiplication part to right
  multiplication by $\hat q$ in every slot. The same change gives
  the displayed matrix of $h$ and the map $\mathcal J$ in
  \eqref{eq:four-sector-coordinates}.
  Fix a unit $\ell\in\hat B$, so that $\mathbb O=\mathbb H\oplus\mathbb H\ell$ with
  the Cayley--Dickson rules~\eqref{eq:cayley-dickson-rules}.
  The temporary coordinates $\boldsymbol\omega=(\omega_0,\dots,\omega_6)\in\mathbb H^7$ are given by
  \[
    w=\sum_{k=0}^{3}\hat\rho_k\otimes(\omega_k\ell)
      +\sum_{j=1}^{3}(\omega_{3+j}\ell)\otimes\rho_j,
  \]
  where the second factor of the first sum is read in $B\subset V$; they are not
  the coordinates of $\mathcal J$. The group $\SU(2)$ acts by
  $\omega_k\mapsto g\omega_k$ (Appendix~\ref{app:su2-bases}, equation~\eqref{eq:frame-stabilizer-action}), so equivariant operators act on these rows from the right.

  \emph{Matrix before the coordinate change.}
  By Definition~\ref{def:operator-h} and bilinearity,
  $h(p\otimes b)=\sum_i((\hat e_i\hat b)p)\otimes e_i$ for $p\in\mathbb O$ and
  $b\in V$, where $\hat b=\sum_jb_j\hat e_j$ for $b=\sum_jb_je_j$
  (equation~\eqref{eq:h-definition}). Expanding this with the Cayley--Dickson rules
  gives the following matrix entries in the temporary coordinates, for
  $i,k=0,\dots,3$ and $j,l=1,2,3$:
  \[
  \begin{aligned}
    (M'_h)_{ik}&=-\hat\rho_k\overline{\hat\rho_i},
    & (M'_h)_{i,3+j}&=\hat\rho_j\overline{\hat\rho_i},\\
    (M'_h)_{3+j,k}&=-\hat\rho_k\hat\rho_j,
    & (M'_h)_{3+j,3+l}&=\hat\rho_l\hat\rho_j.
  \end{aligned}
  \]

  For the scalar part, $R_p$ with
  $p\in\mathbb H$ acts on the first four slots by the real matrix with entries
  $\langle\hat\rho_k,\hat\rho_ip\rangle$ ($i$ the row) and on the last
  three by $\omega\mapsto\omega\bar p$. By the sign-twist identity
  $R_vR_u=R_z\circ(\sigma\otimes1)$ of Proposition~\ref{prop:sign-twist},
  $R_vR_u$ is $R_z$ on the first
  four slots and $\omega\mapsto\omega z$ on the last three.

  \emph{Common scalar action.}
  Change coordinates by
  \[
  \begin{aligned}
    \eta_k&=\tfrac12\sum_{i=0}^{3}\omega_i\,\overline{\hat\rho_k}\,\hat\rho_i,
      && k=0,\dots,3,\\
    \eta_{3+j}&=\omega_{3+j}z, && j=1,2,3.
  \end{aligned}
  \]
  Equivalently,
  \[
    \boldsymbol\eta=\boldsymbol\omega\mathcal T,
    \qquad\text{where}\qquad
    \mathcal T=\diag(T,zI_3),\quad
    T_{ik}=\tfrac12\overline{\hat\rho_k}\hat\rho_i.
  \]
  The matrix $T\in M_4(\mathbb H)$ is unitary.
  The raw coordinates are isometric, and the unitary change commutes with
  the left $K$-action. Hence the resulting map $\mathcal J$ is an
  equivariant isometric isomorphism. On the first four slots this
  change intertwines $R_p$ with $\eta\mapsto\eta p$.

  On the last three slots, the action
  $\omega\mapsto\omega\bar p$ becomes $\eta\mapsto\eta\bar z\bar pz$, which is $\eta p$ for
  $p=u,v$, and the action $\omega\mapsto\omega z$ of $R_vR_u$ becomes $\eta\mapsto\eta z$.
  So the scalar part $a_0+a_1R_u+a_2R_v+a_3R_vR_u$ becomes $\eta\mapsto\eta\hat q$
  on all seven slots.

  \emph{Transformed matrix.}
  Conjugating the blocks of $M'_h$ by $\mathcal T$, with the identities
  $\sum_{k=0}^{3}\overline{\hat\rho_k}\gamma\overline{\hat\rho_k}=-2\bar\gamma$ and
  $\sum_{k=0}^{3}\overline{\hat\rho_k}\gamma\hat\rho_k=4\Re\gamma$ for
  $\gamma\in\mathbb H$ (Appendix~\ref{app:su2-bases}, equation~\eqref{eq:quaternion-sums}), gives the upper-left block with entries
  $2\Re(\overline{\hat\rho_i}\,\overline{\hat\rho_k})$, which is
  $\diag(2,-2,-2,-2)$, the coupling $\hat\rho_j\overline{\hat\rho_k}z$, and the
  lower-right block $\bar z\,\hat\rho_j\hat\rho_i\,z$ entrywise. This is
  \eqref{eq:four-sector-matrix}. The matrix $M_h$ is Hermitian because $h$ is symmetric and
  the coordinates are isometric.

  \emph{Independence of choices.}
  Finally, undoing the coordinates with $f_k=\eta_k\ell$ gives
  \eqref{eq:four-sector-coordinates}, in which neither $\ell$ nor a basis of $\hat B$ appears.
  If $r'_i=\sum_j O_{ij}r_j$ is another orthonormal basis of $\mathbb H$,
  with $O$ real orthogonal, substitution into the defining sum for
  $\Psi_\gamma$ contracts the two occurrences of $O$ by
  $\sum_iO_{ij}O_{ik}=\delta_{jk}$. Thus that sum is basis independent.
\end{proof}

The chosen coordinates make the scalar part equal to $\hat qI_7$
on all seven slots, while the off-diagonal blocks
$M_{+-}$ and $M_{-+}$ of $M_h$ couple $W_+$ and $W_-$.
The row-vector convention is essential: quaternionic
transposition does not generally preserve the reduced norm,
and in particular
$\Nrd((M_h-\hat qI_7)^{\mathsf T})$
can differ from $\Nrd(M_h-\hat qI_7)$.
See also \cite{cohen-2000-quaternionic}.
The sources cited above let matrices act on column vectors, whereas
$\mathcal M$ acts here on row vectors from the right. Conjugating the
coordinates, $\svec\mapsto\bar{\svec}$, turns the row action of $\mathcal M$
into the column action of the conjugate transpose $\mathcal M^{*}$, and
$\operatorname{Sdet}\mathcal M^{*}=\operatorname{Sdet}\mathcal M$
\cite[pp.~61--62]{aslaksen-1996-quaternionic}, so the identities above apply
to $\mathcal M$ as written.

We now evaluate the reduced norm of the displayed matrix $M_h-\hat qI_7$. Let $p_{\mathbf4}=\Nrd(M_h-\hat qI_7)$, so that
$\det(P_\avec|_{W_{\mathbf4}})=p_{\mathbf4}^2$ by
Proposition~\ref{prop:four-block-form}. Recall that $t=x-y$.

\begin{theorem}
\label{thm:four-block-determinant}
  As an identity in $\mathbb Q[a_0,a_1,a_2,a_3]$, after substituting
  $N=a_0^2+a_1^2+a_2^2+a_3^2$ and $t=a_1^2+a_2^2-a_3^2$, we have $p_{\mathbf4}=N\,Q_{12}$, where the polynomial
  $Q_{12}$ is given by
  \begin{align}
    Q_{12}&=F_0(a_0,N)+F_1(a_0,N)\,t+64N\,t^2, \qquad \text{where}
    \label{eq:app-d-q12}\\
    F_1&=16\bigl[(3a_0+3)N^3+(28a_0^2+40a_0+8)N^2\notag\\
    &\qquad\quad+(64a_0^3+104a_0^2+64a_0)N-64a_0^3\bigr],\notag\\
    F_0&=N^6+(14a_0+17)N^5+(32a_0^2+144a_0+64)N^4\notag\\
    &\quad+(-304a_0^3+160a_0^2+512a_0+64)N^3\notag\\
    &\quad+(-1600a_0^4-256a_0^3+3200a_0^2+1024a_0)N^2\notag\\
    &\quad+(-2048a_0^5+3136a_0^4+12288a_0^3+4096a_0^2)N\notag\\
    &\quad+7168a_0^5+12288a_0^4.\notag
  \end{align}
  In particular, $p_{\mathbf4}$ and $Q_{12}$ are polynomials in
  $(a_0,N,t)$, hence in $(a_0,x,y)$, with rational coefficients,
  and $Q_{12}$ has degree $2$ in $t$.
\end{theorem}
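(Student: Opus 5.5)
The plan is to compute $p_{\mathbf4}=\Nrd(M_h-\hat qI_7)$ through its block structure, and to use the symmetry reduction of Proposition~\ref{prop:sector-invariants} to shrink the verification to a small number of variables.

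First, the reduction. Proposition~\ref{prop:sector-invariants} gives $D_{\mathbf4}=p_{\mathbf4}^2$ as a polynomial in $(a_0,x,y)$. Since $p_{\mathbf4}$ is itself a polynomial in $\avec$ (it is the determinant of $\chi(M_h-\hat qI_7)$), I want to show it is $O(2)$-invariant as well. The square is invariant, so $p_{\mathbf4}(\kappa\cdot\avec)=\pm p_{\mathbf4}(\avec)$. The sign is continuous in $\kappa$ and in $\avec$. Because $p_{\mathbf4}\not\equiv0$ (it is monic of degree $14$ in $a_0$, by the argument used for $Q_{14}$), the sign is constant; evaluating at $\avec$ with $a_1=a_2=a_3=0$ fixes it to be $+$. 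It then suffices to compute $p_{\mathbf4}$ at $\avec=(a_0,\tau,0,\upsilon)$, i.e.\ with $\hat q=a_0+\tau u+\upsilon z$.

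For this $\hat q$, the embedding \eqref{eq:app-c-complex-embedding} sends $\hat q$ to a matrix with $\alpha=a_0$, $\beta=\tau$, $\gamma=0$, $\varepsilon=\upsilon$. I would then form the $14\times14$ complex matrix $\chi(M_h)-I_7\otimes\chi(\hat q)$ and reduce it by a Schur complement. The $++$ block $\diag(2,-2,-2,-2)-\hat qI_4$ is diagonal with quaternion entries, and its inverse is $(c-\hat q)^{-1}=\overline{(c-\hat q)}/|c-\hat q|^2$. This gives
\[
  \Nrd(M_h-\hat qI_7)=|2-\hat q|^2\,|{-2}-\hat q|^6\,\Nrd\bigl(M_{--}-\hat q-M_{-+}(M_{++}-\hat q)^{-1}M_{+-}\bigr).
\]
The remaining Schur complement is a $3\times3$ quaternionic matrix whose entries are rational in $\hat q$. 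Its reduced norm is a $6\times6$ complex determinant. That determinant can be expanded symbolically and then cleared of the denominators $|2-\hat q|^2|2+\hat q|^2$, which recovers a polynomial identity valid everywhere by continuity.

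Finally, I would rewrite the result in $(a_0,N,t)$ with $N=a_0^2+\tau^2+\upsilon^2$ and $t=\tau^2-\upsilon^2$. Then I would extract the factor $N$, for example by checking that $p_{\mathbf4}$ vanishes at $\hat q=0$ to the right order, or directly by polynomial division, and compare the quotient coefficientwise with \eqref{eq:app-d-q12}. The final statement in $\mathbb Q[a_0,a_1,a_2,a_3]$ then follows from the rotation-invariance established in the first step.

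The main obstacle is the symbolic expansion of the Schur complement and the identification of the exact coefficients $F_0,F_1$. The noncommutativity of the entries of $M_{+-}$ with $\hat q$ prevents any cleaner closed form, so this step is a computer-algebra verification, checked as an exact polynomial identity. One conceptual point also needs care: the factor $N$ and the degree bound $2$ in $t$ are not obvious in advance. To handle them I would compute in the variables $(a_0,\tau^2,\upsilon^2)$ first and only then change to $t$.
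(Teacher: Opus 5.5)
Your proposal is correct. It reaches the result by a different organization of the computation than the paper uses.

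\textbf{Comparison with the paper.} The paper applies $\chi$ entrywise to $M_h-\hat qI_7$ and symbolically expands the full $14\times14$ complex determinant in all four variables $a_0,\dots,a_3$. The identity in $\mathbb Q[a_0,a_1,a_2,a_3]$ then comes out directly, with no symmetry argument, no sign bookkeeping and no denominators. You reduce the symbolic work in two ways:
\begin{enumerate}
\item The $O(2)$-invariance of Proposition~\ref{prop:sector-invariants} lets you set $a_2=0$.
\item The Schur complement on the diagonal block $M_{++}-\hat qI_4$ reduces the work to a $6\times6$ complex determinant. This step is valid because entrywise $\chi$ is a ring homomorphism $M_7(\mathbb H)\to M_{14}(\mathbb C)$, so the block $LDU$ factorization and the multiplicativity of $\det_{\mathbb C}$ carry over in the order you wrote.
\end{enumerate}
The price is two auxiliary arguments, both sound. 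You need invariance of $p_{\mathbf4}$ itself rather than of $p_{\mathbf4}^2$. You also need a density argument to remove the denominators $|2\mp\hat q|^2$, which vanish at real points only at $\avec=(\pm2,0,0,0)$. Both routes are ultimately exact computer-algebra verifications; yours has a smaller expansion, while the paper's needs no supporting lemmas.

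\textbf{Two small repairs.}
\begin{enumerate}
\item Your continuity argument for the sign is loose as written. $O(2)$ is disconnected, and you do not know in advance that the nonvanishing set of $p_{\mathbf4}$ is connected. Either use the nonnegativity of $\Nrd$ at real points, stated in the paper, so that $p_{\mathbf4}=\sqrt{D_{\mathbf4}}$ on $\mathbb R^4$ and is therefore invariant. Or note that $(p_{\mathbf4}\circ\kappa-p_{\mathbf4})(p_{\mathbf4}\circ\kappa+p_{\mathbf4})=0$ in the integral domain $\mathbb R[a_0,a_1,a_2,a_3]$, and then evaluate at $(a_0,0,0,0)$ as you propose, using monicity in $a_0$.
\item Checking that $p_{\mathbf4}$ vanishes at $\hat q=0$, to any order, does not detect divisibility by $N$. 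The real zero set of $N$ is only the origin; for example, $a_0^2$ vanishes to order two there without being divisible by $N$. Use polynomial division instead, or simply expand $N(F_0+F_1t+64Nt^2)$ at $a_2=0$ and compare it with your result.
\end{enumerate}
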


\begin{proof}
  Apply the embedding~\eqref{eq:app-c-complex-embedding} to the
  quaternionic matrix $M_h-\hat qI_7$ specified in~\eqref{eq:four-sector-matrix}.
  Exact symbolic expansion of this $14\times14$ complex determinant gives
  \begin{equation*}
    \det\nolimits_{\mathbb C}\chi(M_h-\hat qI_7)
      =N\bigl(F_0(a_0,N)+F_1(a_0,N)t+64Nt^2\bigr).
  \end{equation*}
  By definition its left-hand side is $p_{\mathbf4}$, proving the identity.
\end{proof}

Since $NQ_{12}=p_\mathbf{4}=\det\nolimits_{\mathbb C}\chi(M_h-\hat qI_7)\geq 0$, we find immediately that for $N\ne0$, $Q_{12}\geq0$. The following sum of squares proves strict positivity of $Q_{12}$ when
$y>0$, and hence invertibility of the block whenever $a_3\ne0$, since $y=a_3^2$.

\begin{theorem}[Positivity and invertibility of the quaternionic sector]
  \label{thm:four-block-positivity}
  Set $\delta=N+4a_0$ and
  \begin{align*}
    G_0&=\delta\bigl[N^2+(3a_0+4)N-16a_0(a_0+2)\bigr]-8y(N+6a_0),\\
    G_1&=\delta(3N+16a_0+16)-16y,\\
    G_2&=\delta(5N+8a_0-16)+16y.
  \end{align*}
  Then, as a polynomial identity in $\mathbb Q[a_0,x,y]$,
  \begin{equation}
    Q_{12}=G_0^2+xG_1^2+yG_2^2+320xy\delta^2.
    \label{eq:app-c-q12-sos}
  \end{equation}
  Consequently, $Q_{12}>0$ for real $a_0$, $x\ge0$ and $y>0$.

  In particular, for real $\avec$ with $a_3\ne0$,
  $P_\avec|_{W_{\mathbf4}}$ is invertible.
\end{theorem}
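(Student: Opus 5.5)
The identity \eqref{eq:app-c-q12-sos} is a polynomial identity, so I would verify it by direct expansion in $\mathbb Q[a_0,x,y]$. The first step is to express both sides in the common variables $(a_0,N,y)$, with $x=N-a_0^2-y$ and $t=x-y=N-a_0^2-2y$. Substituting into \eqref{eq:app-d-q12} writes $Q_{12}$ as a polynomial of degree $2$ in $y$ whose coefficients lie in $\mathbb Z[a_0,N]$. On the right-hand side, $\delta=N+4a_0$ does not involve $y$, and each $G_i$ is affine in $y$. Hence $G_0^2$ is quadratic in $y$, $xG_1^2=(N-a_0^2-y)G_1^2$ is cubic, $yG_2^2$ is cubic, and $320xy\delta^2$ is quadratic.

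The second step is to compare coefficients of $y^k$ for $k=0,1,2,3$. As a quick consistency check, the $y^3$ coefficients must cancel. That coefficient is $-256$ from $-y\cdot(16y)^2$ in $xG_1^2$, and $+256$ from $y\cdot(16y)^2$ in $yG_2^2$, so they do cancel. The $y^2$, $y^1$ and $y^0$ coefficients are then identities in $\mathbb Z[a_0,N]$. Each has bounded degree: at most $7$ in $N$, and total degree at most about $12$. I would check them by exact symbolic expansion, in the same way Theorem~\ref{thm:four-block-determinant} itself was obtained. As a sanity check, setting $y=0$ should give $Q_{12}|_{y=0}=G_0^2+xG_1^2$ with $G_i$ evaluated at $y=0$. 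A partial hand check is also possible: the leading $N^6$ terms agree, since $G_0^2$ contributes $N^6$ and $xG_1^2$ contributes only lower powers of $N$ in $a_0$-free degree.

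The only real obstacle is the bookkeeping in this expansion. There is no conceptual difficulty, because the sum of squares was presumably found by ansatz and only needs to be confirmed. I would organize the check coefficient-by-coefficient in $y$, to keep each comparison small and the cancellation structure visible.

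The consequences then follow. Take real $a_0$, $x\ge0$ and $y>0$. Every term on the right-hand side is nonnegative, so $Q_{12}\ge G_0^2+yG_2^2+320xy\delta^2\ge0$. To get strict positivity, suppose $Q_{12}=0$. Then $G_0=0$, $G_2=0$ (because $y>0$), and $xG_1^2=0$. There are two cases.

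\begin{itemize}
\item If $\delta=0$, then $G_2=16y>0$, which is a contradiction.
\item If $\delta\ne0$, then $320xy\delta^2=0$ forces $x=0$. In that case $N=a_0^2+y$, and the two equations $G_0=G_2=0$ in $(a_0,y)$ must have no real solution with $y>0$.
\end{itemize}

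This last point is the one step needing an argument beyond expansion. I would solve $G_2=0$ for $y=-\delta(5N+8a_0-16)/16$ and substitute into $G_0$. The result is $\delta\cdot R(a_0,N)$ with $N=a_0^2+y$. I would then check that $R$ has no real common solution, using a resultant in $a_0$ or a direct sign argument.

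As a fallback, I would use the determinant itself. The quantity $NQ_{12}=p_{\mathbf4}$ is the reduced norm of $M_h-\hat qI_7$, and it vanishes only if there is a kernel. So it suffices to rule out kernels on $W_{\mathbf4}$ at $a_1=a_2=0$, $a_3\ne0$.

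Finally, for $a_3\ne0$ we have $N>0$ and $Q_{12}>0$. Hence $\det_{\mathbb R}(P_\avec|_{W_{\mathbf4}})=N^2Q_{12}^2>0$ by Proposition~\ref{prop:four-block-form} and Theorem~\ref{thm:four-block-determinant}, and the block is invertible.
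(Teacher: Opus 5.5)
\textbf{The gap is the case $\delta\ne0$, $x=0$.} This is the only part of the strict-positivity claim that needs a real argument, and you leave it as a plan. The rest is fine: your expansion check of \eqref{eq:app-c-q12-sos}, your treatment of $\delta=0$, and the final invertibility step. (For $\delta=0$, your observation $G_2=16y>0$ handles every $x$ at once, which is a little more economical than the paper's use of $G_1=-16y$.)

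Worse, the plan as written would not close the case. Your elimination is the identity
\[
G_0+\tfrac12(N+6a_0)G_2=\delta R,\qquad 2R=7N^2+(44a_0-8)N+16a_0^2-160a_0.
\]
On $x=0$ we have $G_2=5N^2+28a_0N+16a_0^2-64a_0$, and the system $R=G_2=0$ \emph{does} have real solutions. Eliminating $N^2$ gives $(3a_0-5)N=4a_0(a_0+11)$. Substituting back gives $a_0(7a_0+20)(a_0^2+4a_0-1)=0$. For example, $a_0=-20/7$, $N=48/7$ solves the system, with $y=N-a_0^2=-64/49$.

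So no resultant can show ``no real common solution.'' You must use $y>0$ root by root. At $a_0=-20/7$ and $a_0=-2\pm\sqrt5$ one finds $y<0$, and at $a_0=0$ one finds $y=0$. That check is exactly the missing content. Your fallback via kernels at $a_1=a_2=0$ only restates what has to be proved.

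The paper avoids root analysis entirely. It combines $G_0$ and $G_2$ so that the cofactors are manifestly positive. With $L_0=N\delta-16a_0$ and $x=0$,
\[
G_0+(a_0+4)G_2=\bigl((a_0+4)^2+y\bigr)L_0,\qquad G_2=5L_0+8a_0\bigl((a_0+1)^2+y+1\bigr).
\]
Suppose $G_0=G_2=0$ and $y>0$. The first identity forces $L_0=0$. The second then forces $a_0=0$. But then $N=\delta=y$, so $L_0=y^2>0$, a contradiction.
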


\begin{proof}
Substituting $N=a_0^2+x+y$, $t=x-y$ into~\eqref{eq:app-d-q12},
  exact polynomial expansion gives~\eqref{eq:app-c-q12-sos}.
  Now suppose $y>0$. If $x>0$ and $\delta\ne0$, the last term is positive.
  If $x>0$ and $\delta=0$, then $G_1=-16y$, so $xG_1^2>0$.
  It remains to consider $x=0$, where $N=a_0^2+y$ and
  $Q_{12}=G_0^2+yG_2^2$.
  A zero would therefore force $G_0=G_2=0$. Set $L_0=N\delta-16a_0$.
  In this case the polynomial identities
  \begin{align*}
    G_0+(a_0+4)G_2&=(N+8a_0+16)L_0=((a_0+4)^2+y)L_0,\\
    G_2&=5L_0+8a_0(N+2a_0+2)=5L_0+8a_0((a_0+1)^2+y+1)
  \end{align*}
  follow by expansion. Thus the first identity forces $L_0=0$ and the second then forces $a_0=0$. But $a_0=x=0$ gives $N=\delta=y$, so $L_0=y^2>0$, a contradiction.
  This proves strict positivity in all cases.

For real $\avec$ with $a_3\ne0$, we have $x=a_1^2+a_2^2\ge0$,
  $y=a_3^2>0$ and $N=a_0^2+x+y\ge y>0$.
  The strict positivity just proved gives $Q_{12}>0$.
  Thus $p_{\mathbf4}=NQ_{12}>0$, and
  $\det(P_{\avec}|_{W_{\mathbf4}})=(NQ_{12})^2>0$
  by Theorem~\ref{thm:four-block-determinant}.
\end{proof}

The invertibility proof uses $a_3\neq0$. On the hyperplane $a_3=0$ we instead
use the additional symmetry in the following subsection.

\subsection{The one-step hyperplane \texorpdfstring{$a_3=0$}{a3=0}}
\label{subsec:face-a3-zero}

On the hyperplane $a_3=0$, the associator term vanishes,
and every operator \(P_{\avec}\) is a one-step operator, with enhanced symmetry.

Let $Q=\sqrt{x}=\sqrt{a_1^2+a_2^2}$. If $Q\neq0$, set
\[
  \hat n=\frac{a_1u+a_2v}{Q},
\]
and if $Q=0$, take $\hat n=u$. Let $n\in A$ be the
corresponding unit vector in $V$. By linearity of right
multiplication, for $\avec=(a_0,a_1,a_2,0)$ we have
\[
  P_\avec=h-QR_{\hat n}-a_0I=H_{n,Q}-a_0I,
\]
the level-1 slice pencil of
Section~\ref{sec:one-step-spectrum}.
In particular, $P_\avec$ has the symmetry of the
stabilizer $\SU(3)_n$, which contains
$K=\SU(2)=\Stab_{\Gtwo}(u,v)$.

\begin{lemma}[One-step kernel lemma]
  \label{lem:face-lemma}
  Let $\avec=(a_0,a_1,a_2,0)\in\mathbb R^4$.
  Then $P_\avec$ commutes with $\SU(3)_n$,
  and every nonzero $\SU(3)_n$-invariant subspace
  $U\subset W$ contains a nonzero $K$-fixed vector.
  Consequently,
  \begin{equation*}
    \ker P_\avec\ne0
    \quad\Longrightarrow\quad
    \ker P_\avec\cap W_{\mathbf1}\ne0.
  \end{equation*}
\end{lemma}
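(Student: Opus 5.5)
The proof has three steps: show that $P_\avec$ commutes with $\SU(3)_n$, show that every nonzero $\SU(3)_n$-invariant subspace contains a nonzero $K$-fixed vector, and then apply this to $U=\ker P_\avec$.

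\emph{Commutation.} On the hyperplane $a_3=0$ we have $P_\avec=H_{n,Q}-a_0I$. Here $h$ is $\Gtwo$-equivariant by Proposition~\ref{prop:equivariance}. For $g\in\SU(3)_n$ we have $g\hat n=\hat n$, so $gR_{\hat n}g^{-1}=R_{g\hat n}=R_{\hat n}$, as in the proof of Proposition~\ref{prop:general-reduction}(3). Hence $P_\avec$ commutes with $\SU(3)_n$, and $U=\ker P_\avec$ is an $\SU(3)_n$-invariant subspace of $W$.

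\emph{Fixed vectors.} Next I would check that $K\subset\SU(3)_n$. Since $\hat n\in\mathcal E=\mathrm{span}\{u,v\}$ and $K$ fixes $\mathcal E$ pointwise, every $k\in K$ fixes $\hat n$. (When $Q=0$ the choice $\hat n=u$ is also fixed by $K$.)

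Let $U\ne0$ be $\SU(3)_n$-invariant. Because the action is orthogonal, $U$ is a direct sum of real irreducible $\SU(3)_n$-modules, and by \eqref{eq:su3-decomposition} each has type $\mathbf1,\mathbf6,\mathbf8$ or $\mathbf{12}$. It therefore suffices to show that each of these four types has a nonzero $K$-fixed vector. I would restrict each type to $K=\SU(2)\subset\SU(3)_n$, which is the stabilizer of a unit vector in $\mathbf3\cong\mathbb C^3$ for the complex structure $J_n$. This identification uses $v\perp n$ and $u\in\mathbb R n\oplus\mathbb R J_nv$, so that $K$ fixes the complex line spanned by $v$.

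The restrictions are as follows.
\begin{itemize}
\item $\mathbf1$ is trivial, so it is fixed.
\item $\mathbf6=n^\perp$ contains the fixed vector $v$, or in general the complex line of $\mathcal E\cap n^\perp$.
\item $\mathbf8=\su(3)$ restricted to $\SU(2)$ is $\su(2)\oplus\mathbb C^2\oplus\mathbb R$. The trivial summand is spanned by the element $\diag(i,i,-2i)$ adapted to the fixed line, so $\mathbf8$ has a fixed vector.
\item $\mathbf{12}$ has complexification containing $S^2(\mathbf3)$. Restricted to $K$ we have $\mathbf3=\mathbb C^2\oplus\mathbb C$, so $S^2\mathbf3\supset S^2(\mathbb C)=\mathbb C$, which is trivial. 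Its real points give a $2$-dimensional fixed space, spanned by the real and imaginary parts of $e\otimes e$ for the fixed unit vector $e$.
\end{itemize}
Thus every irreducible summand of $U$ contains a nonzero $K$-fixed vector, and hence so does $U$.

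\emph{Conclusion.} By Theorem~\ref{thm:sectors}(a), $K$-fixed vectors lie in $W^K=W_{\mathbf1}$. Applying the fixed-vector statement to $U=\ker P_\avec$ gives $\ker P_\avec\cap W_{\mathbf1}\ne0$ whenever $\ker P_\avec\ne0$.

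The main obstacle is the second step: correctly identifying $K$ inside $\SU(3)_n$ as the stabilizer of a vector in $\mathbf3$, and handling the real form of $\mathbf{12}$. As a cross-check I would use a dimension count. The restricted fixed dimensions $1,1,1,2$ (for $\mathbf6$: real $\mathbb C\cong\mathbb R^2$ gives $2$; for $\mathbf8$: $1$; for $\mathbf{12}$: $2$) should reproduce $\dim W^K=16$ from \eqref{eq:su3-decomposition}. This gives $4\cdot1+4\cdot2+2\cdot1+2=16$, which confirms the branching.
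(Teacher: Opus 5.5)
Your proposal is correct and follows the paper's proof essentially step for step: equivariance of $P_\avec=H_{n,Q}-a_0I$, the inclusion $K\subset\SU(3)_n$ as the standard block embedding, a $K$-fixed vector in each real type $\mathbf1,\mathbf6,\mathbf8,\mathbf{12}$ (your branching agrees with Lemma~\ref{lem:su3-to-su2-branching}), and then applying this to $U=\ker P_\avec$. One small slip: $v\perp n$ holds only when $a_2=0$, so in general you should replace $v$ by a unit vector of $\mathcal E\cap\hat n^\perp$. This is harmless, since $K$ is the pointwise stabilizer of $\mathcal E$ whatever basis of $\mathcal E$ is used, and your parenthetical in the $\mathbf6$ case already makes this replacement.
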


\begin{proof}
  By Proposition~\ref{prop:equivariance},
  $h$ is $\Gtwo$-equivariant and $R_{\hat n}$
  commutes with $\SU(3)_n$. Hence so does
  $P_\avec=H_{n,Q}-a_0I$.
  Moreover, $K\subset\SU(3)_n$, since $K$
  fixes $\mathbb H$, and therefore $n$, pointwise.

  Under $\SU(3)_n$, the real representation $W$
  decomposes as
  \[
    W\cong
    \mathbf1^{\oplus4}\oplus
    \mathbf6^{\oplus4}\oplus
    \mathbf8^{\oplus2}\oplus
    \mathbf{12}
  \]
  (Section~\ref{sec:one-step-spectrum}).
  These four representation types are irreducible
  over $\mathbb R$ (Section~\ref{sec:one-step-spectrum}).
  Under the standard inclusion
  $K\subset\SU(3)_n$, each contains a nonzero
  $K$-fixed vector
  (Lemma~\ref{lem:su3-to-su2-branching}
  of Appendix~\ref{app:su2-bases}).

  Now let $U\subset W$ be a nonzero
  $\SU(3)_n$-invariant subspace.
  By complete reducibility for the compact group
  $\SU(3)_n$ \cite[Theorem~4.28]{hall-2015-lie},
  $U$ contains a nonzero
  irreducible $\SU(3)_n$-submodule.
  This submodule must be of one of the four types
  above, and hence contains a nonzero $K$-fixed
  vector. Therefore
  \[
    U\cap W_{\mathbf1}=U^K\ne0.
  \]

  Finally, since $P_\avec$ commutes with
  $\SU(3)_n$, its kernel is $\SU(3)_n$-invariant.
  Applying the preceding conclusion to
  $U=\ker P_\avec$ proves the lemma.
\end{proof}

\subsection{The full determinant and real singular locus}
\label{subsec:determinant}
\label{subsec:real-locus}

Multiplying the three sector determinants gives the full determinant below.
Corollary~\ref{cor:general-pairs} expresses it in terms of invariants of a pair.

\begin{theorem}[The two-step determinant]
\label{thm:two-step-determinant}
For the pencil of an orthonormal pair $u,v\in\Im\mathbb O$,
\begin{equation}
  \det\nolimits_{56}P_{\avec}=N^6\,Q_{14}(a_0,x,y)\,(N+4a_0)^3\,Q_{12}(a_0,x,y)^2,
  \label{eq:two-step-determinant}
\end{equation}
where $Q_{14}$ and $Q_{12}$ are the polynomials of Theorems~\ref{thm:fixed-sector-determinant} and~\ref{thm:four-block-determinant}.
\end{theorem}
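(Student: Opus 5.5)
The plan is to multiply the three sector determinants. By Theorem~\ref{thm:sectors}, $W=W_{\mathbf1}\oplus W_{\mathbf3}\oplus W_{\mathbf4}$ is a direct sum of subspaces, each preserved by $P_\avec$ for every $\avec\in\mathbb R^4$. Choosing a real basis of $W$ adapted to this splitting makes $P_\avec$ block diagonal. Hence
\[
\det\nolimits_{56}P_\avec=\det(P_\avec|_{W_{\mathbf1}})\,\det(P_\avec|_{W_{\mathbf3}})\,\det(P_\avec|_{W_{\mathbf4}}).
\]
The determinant of a real endomorphism does not depend on the basis. The weighted (non-orthonormal) coordinates of Proposition~\ref{prop:w1-normal-form} and the coordinates $\mathcal J$ therefore cause no normalization issue: each sector determinant is the determinant of the restricted endomorphism itself.

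Next I would substitute the three sector results.

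\begin{itemize}
\item[] Theorem~\ref{thm:fixed-sector-determinant}(i) gives $NQ_{14}$ for $W_{\mathbf1}$.
\item[] Theorem~\ref{thm:three-block-determinant}(i) gives $p_{\mathbf3}^3=N^3(N+4a_0)^3$ for $W_{\mathbf3}$.
\item[] Proposition~\ref{prop:four-block-form}(ii) together with Theorem~\ref{thm:four-block-determinant} gives $p_{\mathbf4}^2=N^2Q_{12}^2$ for $W_{\mathbf4}$.
\end{itemize}

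The powers of $N$ combine as $N^{1+3+2}=N^6$, and the product is exactly \eqref{eq:two-step-determinant}.

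It remains to check that the identity holds everywhere as polynomials, not just generically. Each sector result is stated as a polynomial identity valid for all $\avec\in\mathbb R^4$:

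\begin{itemize}
\item[] the fixed-sector formula holds including at $NN_3=0$, by the appendix continuation argument;
\item[] the $W_{\mathbf3}$ formula was extended to $N=0$ by polynomiality;
\item[] the $W_{\mathbf4}$ formula is an exact symbolic identity.
\end{itemize}

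So the product identity holds in $\mathbb Q[a_0,a_1,a_2,a_3]$. Writing the factors in terms of $(a_0,x,y)$ uses Proposition~\ref{prop:sector-invariants}, or the explicit forms of $Q_{14}$ and $Q_{12}$ in $(a_0,N,t)$ with $N=a_0^2+x+y$ and $t=x-y$.

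As a sanity check, I would verify the degrees. The total degree in $a_0$ should be $56$, and indeed $12+14+6+24=56$. The product should also be monic in $a_0$, which is consistent since $P_\avec+a_0I$ is independent of $a_0$.

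The only real obstacle is already absorbed into the cited results: the validity of the fixed-sector factorization at $NN_3=0$. Given those results, the theorem is a direct consequence and needs no further computation.
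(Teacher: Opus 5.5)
Your proposal is correct and matches the paper's argument, which obtains \eqref{eq:two-step-determinant} by multiplying the sector determinants $NQ_{14}$, $[N(N+4a_0)]^3$ and $[NQ_{12}]^2$ (Theorems~\ref{thm:fixed-sector-determinant}, \ref{thm:three-block-determinant} and~\ref{thm:four-block-determinant}) over the invariant decomposition of Theorem~\ref{thm:sectors}. Your extra remarks on basis independence, on global polynomial validity, and the degree and monicity check $12+14+6+24=56$ are all accurate.
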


We now determine the real singular locus of $P_\avec$.
Recall the zero set $Z$ of $Q_{14}$ from \eqref{eq:def-Z} and the sphere $S$
from Theorem~\ref{thm:three-block-determinant}.

\begin{theorem}[The real locus]
  \label{thm:real-locus}
  Let $\Sigma\subset\mathbb R^4$ be the singular locus
  of $P_\avec$, and for
  $\mathbf i\in\{\mathbf1,\mathbf3,\mathbf4\}$,
  let $\Sigma_{\mathbf i}$ be the singular locus
  of its restriction to $W_{\mathbf i}$. Then:
  \begin{enumerate}
    \item[\textup{(i)}]
      The sector singular loci satisfy
      \[
        \Sigma_{\mathbf1}=Z,\qquad
        \Sigma_{\mathbf3}=S\qquad\text{and}\qquad
        \Sigma_{\mathbf4}\subset Z\cap\{a_3=0\}.
      \]

    \item[\textup{(ii)}]
      The full real singular locus is
      \[
        \Sigma=Z\cup S.
      \]
  \end{enumerate}
\end{theorem}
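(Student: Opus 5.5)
The plan is to use the orthogonal sector decomposition of Theorem~\ref{thm:sectors}. Since each $W_{\mathbf i}$ is preserved by $P_\avec$, we have $\ker P_\avec=\bigoplus_{\mathbf i}\ker(P_\avec|_{W_{\mathbf i}})$. Hence $\Sigma=\Sigma_{\mathbf1}\cup\Sigma_{\mathbf3}\cup\Sigma_{\mathbf4}$, and part (ii) will follow from (i) once we know $\Sigma_{\mathbf4}\subset Z$. The three pieces of (i) will be handled one at a time.

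\emph{The sectors $W_{\mathbf1}$ and $W_{\mathbf3}$.} For $W_{\mathbf1}$, Theorem~\ref{thm:fixed-sector-determinant}(i) gives $\det(P_\avec|_{W_{\mathbf1}})=NQ_{14}$. For real $\avec$ we have $N=0$ only at the origin. At $\avec=0$ we have $N_3=9$, and I will check directly that $Q_{14}(0,0,0)=q_0(0,0)=0$. This is immediate from \eqref{eq:fixed-sector-q14-ant}, since every term of $q_0$ carries a factor $N$ or $a_0$, and the $t$-terms vanish at the origin. Thus the zero set of $NQ_{14}$ in $\mathbb R^4$ is exactly $Z$, so $\Sigma_{\mathbf1}=Z$. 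The equality $\Sigma_{\mathbf3}=S$ is Theorem~\ref{thm:three-block-determinant}(ii).

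\emph{The sector $W_{\mathbf4}$.} By Theorem~\ref{thm:four-block-positivity}, $P_\avec|_{W_{\mathbf4}}$ is invertible whenever $a_3\neq0$, so $\Sigma_{\mathbf4}\subset\{a_3=0\}$. Now take $\avec\in\Sigma_{\mathbf4}$ with $a_3=0$. Then $\ker P_\avec\neq0$, and Lemma~\ref{lem:face-lemma} gives $\ker P_\avec\cap W_{\mathbf1}\neq0$. This means $\avec\in\Sigma_{\mathbf1}=Z$, and so $\Sigma_{\mathbf4}\subset Z\cap\{a_3=0\}$. Combining the three sectors gives $\Sigma=Z\cup S\cup\Sigma_{\mathbf4}=Z\cup S$.

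The only delicate point is the origin in the fixed sector. There the Schur-complement normalization of Theorem~\ref{thm:fixed-sector-determinant} degenerates, because $N=0$. We therefore rely on the global polynomial identity $\det=NQ_{14}$ rather than the Schur formula. We do not need $Q_{14}(0)=0$ at all: since $N(0)=0$, the origin lies in $\Sigma_{\mathbf1}$ regardless. The remaining question is only whether $0\in Z$, which is what the identity $\Sigma_{\mathbf1}=Z$ requires, and the inspection of $q_0$ above settles it. As an independent check, $h$ has eigenvalue $0$ on $S^2_0V$, and this forces $\ker h\cap W^K\neq0$ via Lemma~\ref{lem:face-lemma} at $\avec=0$. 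That in turn gives $\det(h|_{W_{\mathbf1}})=0$, so $NQ_{14}$ vanishes at the origin. This is consistent with, though not by itself sufficient for, $0\in Z$, so I would rely on the explicit evaluation of $q_0$.
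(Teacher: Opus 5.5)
Your proof is correct and follows the paper's argument essentially step for step. You use the sector decomposition, $\det=NQ_{14}$ on $W_{\mathbf1}$, the sphere from $W_{\mathbf3}$, and Theorem~\ref{thm:four-block-positivity} together with Lemma~\ref{lem:face-lemma} for $W_{\mathbf4}$. The only variation is how you confirm $0\in Z$: you evaluate the expansion \eqref{eq:fixed-sector-q14-ant} at the origin, whereas the paper observes that $M_{\mathrm{pol}}$ itself vanishes there; both checks are valid.
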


\begin{proof}
  The invariant sector decomposition gives
  \[
    \ker P_\avec
      =\bigoplus_{\mathbf i}
        \ker(P_\avec|_{W_{\mathbf i}}),
    \qquad\text{and}\qquad
    \Sigma=\bigcup_{\mathbf i}\Sigma_{\mathbf i}.
  \]

  By Theorem~\ref{thm:fixed-sector-determinant},
  the determinant on $W_{\mathbf1}$ is $NQ_{14}$.
  Over $\mathbb R$, $N$ vanishes only at the origin,
  which also belongs to $Z$ since $M_{\mathrm{pol}}=0$
  there (Lemma~\ref{lem:fixed-sector-polynomial-matrix}).
  Thus $\Sigma_{\mathbf1}=Z$.
  Theorem~\ref{thm:three-block-determinant} gives
  $\Sigma_{\mathbf3}=S$.

  For the remaining sector,
  Theorem~\ref{thm:four-block-positivity}
  shows that
  \[
    \Sigma_{\mathbf4}\subset\{a_3=0\}.
  \]
  On this hyperplane, Lemma~\ref{lem:face-lemma} implies that
  $P_\avec$ is singular if and only if its
  restriction to $W_{\mathbf1}$ is singular.
  Hence
  \[
    \Sigma\cap\{a_3=0\}
      =Z\cap\{a_3=0\}.
  \]
  In particular, $\Sigma_{\mathbf4}\subset Z$,
  completing (i). Finally, the sector decomposition
  gives
  \[
    \Sigma
      =\Sigma_{\mathbf1}\cup\Sigma_{\mathbf3}
         \cup\Sigma_{\mathbf4}
      =Z\cup S,
  \]
  proving (ii).
\end{proof}

\begin{remark}
\label{rem:one-step-specialization}
Define $N_0=a_0^2+x$ and $C_0=N_0+4a_0$, and write
$\widetilde p_{\mathbf1}(a_0,x)=p_{\mathbf1}(a_0,\sqrt{x})$.
This is a polynomial because the quartic~\eqref{eq:p1} is even in $Q$.
For $y=0$, we find that
\begin{equation*}
 Q_{14}(a_0,x,0)=C_0\widetilde p_{\mathbf1} E_8\qquad\text{and}\qquad Q_{12}(a_0,x,0)=C_0^2E_8,
\end{equation*}
Here
\[
 E_8(a_0,x)=\left|p_{\mathbf6}(a_0,\sqrt{x})\right|^2,
\]
where $p_{\mathbf6}$ is the complex quartic determinant of the one-step
$\mathbf6^{\oplus4}$ multiplicity block
\cite{grigorian-2026-octonionic}. The singlet quartic $p_{\mathbf1}$ of
\eqref{eq:p1} is the separate factor $\widetilde p_{\mathbf1}$ above.
By Proposition~\ref{prop:level1-spectrum}, $E_8>0$ for $x>0$, so it adds
no nonreal one-step spectral curve.

\end{remark}

At every point of $Z\cap S$, the kernels in
$W_{\mathbf1}$ and $W_{\mathbf3}$ are both nontrivial
and contribute independently to $\ker P_\avec$.

\renewcommand{\topfraction}{0.9}
\renewcommand{\floatpagefraction}{0.8}
Define the invariant-image map
\[
  \pi:\mathbb R^4\longrightarrow\mathbb R\times[0,\infty)^2,
  \qquad\text{by}\qquad
  \pi(\avec)=(a_0,a_1^2+a_2^2,a_3^2).
\]
Figure~\ref{fig:real-locus} plots $\pi(Z)$ and $\pi(S)$ in
invariant-image coordinates. The sphere $S$ is round in $\mathbb R^4$, but its invariant image is
$$
\pi(S)=\{(a_0,x,y):x,y\ge0,\ x+y=4-(a_0+2)^2\}.
$$
Thus its image is a portion of a parabolic cylinder.

The boundary $y=0$ is the invariant image of the coefficient hyperplane
$a_3=0$. On this hyperplane, $P_{\avec}=H_{n,Q}-a_0$
(Section~\ref{subsec:face-a3-zero}). Thus the trace on $y=0$ is the image
of the level-1 slice locus under $(\mu,Q)\mapsto(a_0,x)=(\mu,Q^2)$.
Its real-axis trace is the real spectrum of $h$ given in
Proposition~\ref{prop:real-spectrum}.

\begin{figure}[t]
  \centering
  \includegraphics{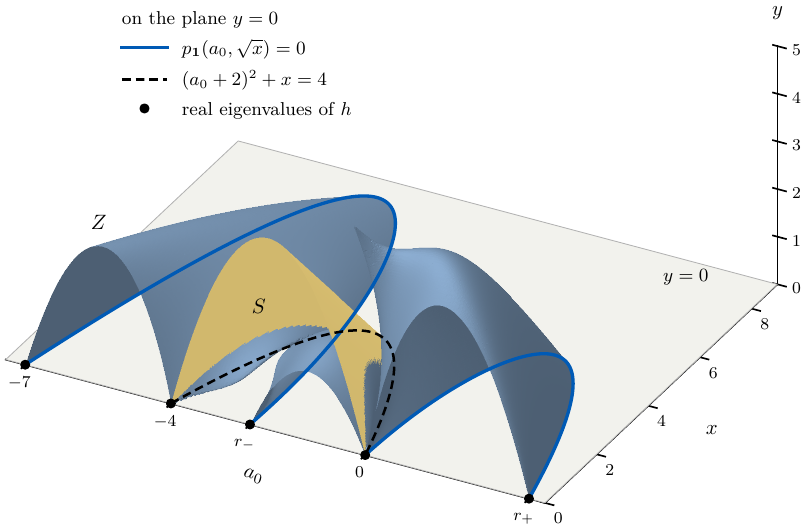}
  \caption{Invariant images of the real locus in $(a_0,x,y)$, with $x,y\ge0$: blue-gray is the image of $Z$ and pale yellow the image of $S$. On $y=0$, the marks of Figure~\ref{fig:level1-slice-locus} are mapped by $(\mu,Q)\mapsto(a_0,x)=(\mu,Q^2)$: blue is the quartic image, dashed is the one-step circle image, and dots are the five real eigenvalues. The dashed curve is the trace of the image of $S$ on the boundary plane $y=0$. }
  \label{fig:real-locus}
\end{figure}

The following corollary expresses the determinant and real-locus results
in invariants of an arbitrary octonion pair. Recall that $a_0=\Re(\lambda_1\lambda_2)$, $N=\lvert\lambda_1\rvert^2\lvert\lambda_2\rvert^2$, that $y=a_3^2$ is the Gram determinant of $\Im\lambda_1$ and $\Im\lambda_2$, and that $x=N-a_0^2-y$ \eqref{eq:invariants}.
\begin{corollary}
  \label{cor:general-pairs}
  Let $(\lambda_1,\lambda_2)$ be any pair of octonions and let $\avec$ be its reduced point. 
  \begin{enumerate}
    \item The determinant $\det\nolimits_{56}(h-R_{\lambda_2}R_{\lambda_1})$ is given by \eqref{eq:two-step-determinant} at $(a_0,x,y)$, with $N=a_0^2+x+y=\lvert\lambda_1\rvert^2\lvert\lambda_2\rvert^2$.
    \item The equation $h(w)=(w\lambda_1)\lambda_2$ has a nonzero solution $w\in W$ if and only if $\avec\in Z\cup S$, that is, if and only if $Q_{14}(a_0,x,y)=0$ or
    \begin{equation}
      \lvert\lambda_2\lambda_1+2\rvert=2
      \quad\Longleftrightarrow\quad
      \lvert\lambda_1\rvert^2\lvert\lambda_2\rvert^2+4\Re(\lambda_1\lambda_2)=0.
      \label{eq:general-pair-sphere}
    \end{equation}
    \item If $\Im\lambda_1$ and $\Im\lambda_2$ are linearly independent, that is, $a_3\neq0$, the $W_{\mathbf4}$ block of $h-R_{\lambda_2}R_{\lambda_1}$ is invertible.
  \end{enumerate}
\end{corollary}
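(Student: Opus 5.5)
The plan is to reduce everything to the pencil at the reduced point and then read off the three claims from the results already established.

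First, fix the pair $(\lambda_1,\lambda_2)$ and choose the Gram--Schmidt frame $(u,v)$ of \eqref{eq:general-pair}. Let $\avec$ be the reduced point defined by \eqref{eq:reduced-point}. By Proposition~\ref{prop:general-reduction}(1) we have $h-R_{\lambda_2}R_{\lambda_1}=P_\avec$ as operators on $W$.

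\textbf{Part 1.} Since the two operators are equal, Theorem~\ref{thm:two-step-determinant} gives $\det_{56}(h-R_{\lambda_2}R_{\lambda_1})$ directly, evaluated at $(a_0,x,y)$ with $x=a_1^2+a_2^2$ and $y=a_3^2$. Proposition~\ref{prop:general-reduction}(2) supplies the intrinsic identities $a_0=\Re(\lambda_1\lambda_2)$, $N=|\lambda_1|^2|\lambda_2|^2$, $y$ equal to the Gram determinant, and $x=N-a_0^2-y$. Hence the right-hand side depends only on invariants of the pair and not on the choice of frame. Alternatively, frame independence follows from Proposition~\ref{prop:sector-invariants} together with \eqref{eq:frame-conjugation}.

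\textbf{Part 2.} A nonzero solution $w$ of $h(w)=(w\lambda_1)\lambda_2$ is exactly a nonzero element of $\ker P_\avec$. By Theorem~\ref{thm:real-locus}(ii), such an element exists precisely when $\avec\in\Sigma=Z\cup S$. Membership in $Z$ is the condition $Q_{14}(a_0,x,y)=0$, by the definition \eqref{eq:def-Z}.

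For membership in $S$, I would rewrite the defining equation as $(a_0+2)^2+a_1^2+a_2^2+a_3^2=4$. Expanding, this is equivalent to $N+4a_0=0$. Translating with the intrinsic identities above gives $|\lambda_1|^2|\lambda_2|^2+4\Re(\lambda_1\lambda_2)=0$.

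To reach the form $|\lambda_2\lambda_1+2|=2$, use $\check q=\lambda_2\lambda_1$ from Proposition~\ref{prop:general-reduction}(2). Since $\Re\check q=a_0$ and $|\check q|^2=N$, we get
\[
|\check q+2|^2-4=|\check q|^2+4\Re\check q=N+4a_0 .
\]
This is the identity $p_{\mathbf3}=|\check q|^2\bigl(|\check q+2|^2-4\bigr)$ of Theorem~\ref{thm:three-block-determinant} with the factor $N$ removed, so the two forms in \eqref{eq:general-pair-sphere} agree.

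\textbf{Part 3.} By Proposition~\ref{prop:general-reduction}(2), linear independence of $\Im\lambda_1$ and $\Im\lambda_2$ is equivalent to $y=a_3^2\neq0$, that is, $a_3\neq0$. Theorem~\ref{thm:four-block-positivity} then shows that $P_\avec|_{W_{\mathbf4}}$ is invertible. This restriction is the $W_{\mathbf4}$ block of $h-R_{\lambda_2}R_{\lambda_1}$, because $W_{\mathbf4}$ is invariant under the operator by Theorem~\ref{thm:sectors}.

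The only step needing any care is Part 2: we must confirm that the frame-dependent point $\avec$ enters only through frame-independent quantities. This holds because both $Z$ and $S$ are defined through $(a_0,x,y)$. Everything else is direct citation.
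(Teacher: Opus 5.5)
Your proposal is correct and is essentially the paper's argument. The paper obtains the corollary by combining $h-R_{\lambda_2}R_{\lambda_1}=P_{\avec}$ and the intrinsic formulas for $a_0,N,y$ from Proposition~\ref{prop:general-reduction} with Theorems~\ref{thm:two-step-determinant}, \ref{thm:real-locus} and~\ref{thm:four-block-positivity}, and it rewrites the sphere condition as $N+4a_0=|\check q+2|^2-4$ with $\check q=\lambda_2\lambda_1$.
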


The following corollary measures how far a two-step kernel vector is
from satisfying any one-step equation. It also identifies exactly
which kernel vectors have no octonionic right eigenvalue.

\begin{corollary}
  \label{cor:neither-product}
  Let $\avec\in\mathbb R^4$ and
  $0\ne w\in\ker P_\avec$, with sector decomposition
  $w=w_1+w_3+w_4$. Set
  $\alpha=\lVert\Pi_+w\rVert^2$ and
  $\beta=\lVert\Pi_-w\rVert^2$. Then
  \begin{equation}
    \min_{\lambda\in\mathbb O}\lVert h(w)-w\lambda\rVert^2
      =\frac{4a_3^2\alpha\beta}{\alpha+\beta}.
    \label{eq:right-eigenvalue-residual}
  \end{equation}
  The unique minimizing octonion is
  \[
    \lambda_*=
    \frac{\alpha\hat q+\beta\check q}{\alpha+\beta}.
  \]
  Consequently,
  \begin{equation*}
    h(w)\ne w\lambda\ \text{for every }\lambda\in\mathbb O
    \quad\Longleftrightarrow\quad
    a_3\ne0\ \text{and}\ w_1\ne0.
  \end{equation*}
  This is equivalent to satisfying neither of the equations
  $h(w)=w\hat q$ and $h(w)=w\check q$.
  Such a kernel vector exists precisely when $\avec\in Z$ and $a_3\ne0$.
\end{corollary}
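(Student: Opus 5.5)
The plan is to expand the residual as a quadratic function of $\lambda$, show that all cross terms between the two halves vanish, and then read the minimum off the normal equations. Write $w_\pm=\Pi_\pm w$. By Proposition~\ref{prop:halves}(a), $P_\avec w=0$ means
\[
h(w)=w_+\hat q+w_-\check q,
\qquad\text{so}\qquad
h(w)-w\lambda=w_+(\hat q-\lambda)+w_-(\check q-\lambda).
\]
Norm multiplicativity, applied coefficientwise, gives $\|p\lambda\|^2=|\lambda|^2\|p\|^2$ for $p\in W$. Hence
\[
f(\lambda):=\|h(w)-w\lambda\|^2=\|h(w)\|^2-2\langle h(w),w\lambda\rangle+|\lambda|^2(\alpha+\beta).
\]
If the cross terms drop out (see below), we get $\|h(w)\|^2=(\alpha+\beta)N$, using $|\hat q|^2=|\check q|^2=N$. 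We also get $\langle h(w),w\lambda\rangle=\langle\alpha\hat q+\beta\check q,\lambda\rangle$. Completing the square then gives the minimizer $\lambda_*$ and the minimum value
\[
\alpha|\hat q|^2+\beta|\check q|^2-\frac{|\alpha\hat q+\beta\check q|^2}{\alpha+\beta}
=\frac{\alpha\beta}{\alpha+\beta}\,|\hat q-\check q|^2
=\frac{4a_3^2\alpha\beta}{\alpha+\beta},
\]
since $\hat q-\check q=2a_3z$.

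Two cross terms must vanish. The first is $\langle w_+\hat q,w_-\check q\rangle$. It is zero coefficientwise, because $\mathbb H\hat q\subset\mathbb H$ and $\hat B\check q\subset\hat B$ are orthogonal. The second is
\[
c(\lambda)=\langle w_+\hat q,w_-\lambda\rangle+\langle w_-\check q,w_+\lambda\rangle.
\]
For $\lambda\in\mathbb H$ both terms vanish by the same orthogonality. For $\lambda=s\ell\in\hat B$, I would expand with the Cayley--Dickson rules. For coefficients $p\in\mathbb H$ and $r=t\ell$, this gives $-\Re(sp\hat q\bar t)+\Re(sp\check q\bar t)=-2a_3\Re(spz\bar t)$, summed over the $V$-contraction. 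So $c$ carries a factor $a_3$ and vanishes when $a_3=0$.

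When $a_3\ne0$, Theorem~\ref{thm:four-block-positivity} gives $w_4=0$, so $w=w_1+w_3$. The remaining form $(w_+,w_-)\mapsto c|_{\hat B}$ is $K$-equivariant with values in $\hat B^*$, which is of type $\mathbf4$. We have $w_+\in W_{\mathbf1}$, because $W_{\mathbf3}\subset W_-$. Therefore only the pairings $W_{\mathbf1}\times W_{\mathbf1}$ and $W_{\mathbf1}\times W_{\mathbf3}$ occur. These map into $\mathbf1\otimes\mathbf1$ and $\mathbf1\otimes\mathbf3$, neither of which contains $\mathbf4$, so by Schur's lemma $c=0$.

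For the equivalence, the residual vanishes iff $a_3=0$, or $\alpha=0$, or $\beta=0$. Since $W_{\mathbf3}\subset W_-$, the case $w_1=0$ forces $\alpha=0$. The converse needs the following statement: if $a_3\ne0$ and $w_1\ne0$ is a kernel vector in $W_{\mathbf1}$, then $w_1$ has nonzero components in both halves. This is the step I expect to be the main obstacle. I would use the block form of Section~\ref{subsec:fixed-sector-block}.

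If $\eta=0$, then $\boldsymbol\vartheta$ lies in $\ker P_{AA}$ and $h_{0A}\boldsymbol\vartheta=0$. Here $\ker P_{AA}$ is $U_0$ or $U_3$ at $\hat q=0$ or $\hat q=-3$, and both force $a_3=0$. If $\boldsymbol\vartheta=0$, then $h_{A0}\eta=0$ and $h_{00}\eta=\eta\hat q$. I would check directly from \eqref{eq:fixed-sector-h-blocks} that these conditions force $\eta=0$, using $h_{A0}=4h_{0A}^{\mathsf T}$. When $w_1\ne0$, also $\beta>0$ because $\Pi_-w\supset\eta$-part, so $\alpha\beta>0$. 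This argument may need to track $\alpha$ and $\beta$ through $\Phi$ with the metric weight $G$.

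The equivalence with "neither $h(w)=w\hat q$ nor $h(w)=w\check q$" follows from the same computation. Substituting $\lambda=\hat q$ gives $f(\hat q)=4a_3^2\beta$, and substituting $\lambda=\check q$ gives $f(\check q)=4a_3^2\alpha$. Finally, existence follows from Theorem~\ref{thm:real-locus}: when $a_3\ne0$ we have $\ker P_\avec\cap W_{\mathbf1}\ne0$ exactly when $\avec\in Z$, and such a $w=w_1$ has the required property.
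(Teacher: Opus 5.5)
Your proposal is correct, but it reaches the key orthogonality by a different route from the paper.

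\textbf{The paper's route.} The paper treats $a_3=0$ directly: there $h(w)=w\hat q$ and $\hat q=\check q$. For $a_3\ne0$ it uses $w_4=0$ to note that $\Pi_+w\in\mathbb H\otimes A$ and $\Pi_-w\in\hat B\otimes B$. Since right multiplication acts only on the octonion factor, $(\Pi_+w)(\hat q-\lambda)\in\mathbb O\otimes A$ and $(\Pi_-w)(\check q-\lambda)\in\mathbb O\otimes B$ are orthogonal for every $\lambda\in\mathbb O$. This gives $\alpha|\hat q-\lambda|^2+\beta|\check q-\lambda|^2$ in one line, with no case split on $\lambda$.

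\textbf{Your route.} You use orthogonality in the octonion factor ($\mathbb H\perp\hat B$), which only disposes of $\lambda\in\mathbb H$. You then kill the $\hat B$-part of the cross term in two ways: a Cayley--Dickson computation when $a_3=0$, and a Schur-lemma argument on a $K$-equivariant map into $\hat B^*\cong\mathbf4$ when $a_3\ne0$. Both steps are valid.

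\textbf{How the two relate.} Your own coefficient formula $-2a_3\Re(spz\bar t)$, summed over the $V$-index, already contains the paper's point. It pairs the $\mathbb H$-coefficient of $w_+$ with the $\hat B$-coefficient of $w_-$ at the same $V$-index. So it vanishes term by term once $w_+$ lives over $A$ and $w_-$ over $B$. Your Schur argument is the invariant restatement of this.

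\textbf{What each buys.} Your version gives a uniform quadratic formula, including the unique minimizer when $a_3=0$. It also explains, in terms of $K$-types, why only the $\mathbf4$-sector could couple the two halves. The paper's version buys brevity.

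\textbf{Remaining steps.} These match the paper.
\begin{itemize}
\item $P_{AA}$ is invertible, since $N\ge a_3^2>0$ and $N_3\ge a_3^2>0$.
\item Injectivity of $h_{A0}$ is immediate from $(h_{A0}\eta)_i=-4\bar z\bar\eta z\hat\rho_i$. So you need neither $h_{A0}=4h_{0A}^{\mathsf T}$ nor the metric weight $G$.
\item $\beta>0$ because $\Pi_-w_1$ and $w_3$ lie in orthogonal isotypic sectors and so cannot cancel.
\end{itemize}
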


\begin{proof}
  If $a_3=0$, then $P_\avec=h-R_{\hat q}$ and
  $\hat q=\check q$. Every kernel vector satisfies the common
  one-step equation, and both sides of
  \eqref{eq:right-eigenvalue-residual} vanish.

  Suppose $a_3\ne0$. Theorem~\ref{thm:four-block-positivity}
  gives $w_4=0$. By Theorem~\ref{thm:sectors},
  \[
    W_{\mathbf1}=(\mathbb H\otimes A)\oplus(\hat B\otimes B)^K,
    \qquad W_{\mathbf3}\subset\hat B\otimes B.
  \]
  Thus $\Pi_+w$ has its vector index in $A$, and
  $\Pi_-w$ has its vector index in $B$.
  Proposition~\ref{prop:halves} gives, for every $\lambda\in\mathbb O$,
  \[
    h(w)-w\lambda
      =(\Pi_+w)(\hat q-\lambda)+(\Pi_-w)(\check q-\lambda).
  \]
  The two terms are orthogonal because $A\perp B$ in the
  second tensor factor; this does not require right multiplication
  by $\lambda$ to preserve $\mathbb H$ or $\hat B$.
  Multiplicativity of the octonion norm therefore yields
  \begin{equation}
    \lVert h(w)-w\lambda\rVert^2
      =\alpha|\hat q-\lambda|^2+\beta|\check q-\lambda|^2.
    \label{eq:weighted-right-residual}
  \end{equation}
  Since $\alpha+\beta=\lVert w\rVert^2>0$, this quadratic is
  strictly convex. Completing the square gives the stated minimizer
  and minimum, using $|\hat q-\check q|^2=4a_3^2$.

  It remains to characterize $\alpha\beta>0$.
  If $w_1=0$, then $w=w_3\in W_-$ and $\alpha=0$.
  If $w_1\ne0$, write its fixed-sector coordinates as
  $(\boldsymbol\vartheta;\eta)\in\mathbb H^3\oplus\mathbb H$.
  Since $N,N_3>0$, the block $P_{AA}$ is invertible, so
  $\eta=0$ forces $\boldsymbol\vartheta=0$.
  Conversely, $\boldsymbol\vartheta=0$ forces $h_{A0}\eta=0$,
  and the injective block $h_{A0}$ in
  \eqref{eq:fixed-sector-h-blocks} then gives $\eta=0$.
  Hence both projections of $w_1$ are nonzero. Moreover,
  \[
    \Pi_+w=\Pi_+w_1
    \qquad\text{and}\qquad
    \Pi_-w=\Pi_-w_1+w_3,
  \]
  where the last two terms cannot cancel: the projections are
  $K$-equivariant and these terms belong to distinct isotypic sectors.
  Thus $\alpha\beta>0$ exactly when $w_1\ne0$.
  The minimum is attained, so its positivity proves the criterion.
  Evaluating \eqref{eq:weighted-right-residual} at $\hat q$ and
  $\check q$ gives $4a_3^2\beta$ and $4a_3^2\alpha$, respectively,
  proving the neither-product equivalence.
  Finally, Theorem~\ref{thm:real-locus} gives a nonzero fixed-sector
  kernel exactly on $Z$, proving the existence assertion.
\end{proof}

Corollary~\ref{cor:neither-product} concerns an individual kernel vector.
The following example also excludes both products from the entire
one-step right spectrum, although a nonzero two-step solution exists.

\begin{example}[A genuinely two-step spectral point]
  \label{rem:new-spectrum-point}
  Let $b=\sqrt6/4$ and $c=\sqrt{22}/4$. The octonion pair
  \[
    \lambda_1=cu\qquad\text{and}\qquad
    \lambda_2=\sqrt{\frac3{11}}+\frac6{\sqrt{22}}u+v
  \]
  has a one-dimensional two-step kernel, while both
  $\lambda_1\lambda_2$ and $\lambda_2\lambda_1$ lie outside the
  one-step right spectrum. Its imaginary parts are linearly independent
  and nonorthogonal, and its reduced point is
  $\avec_*=(-3/2,b,0,c)$.
  Here $N=N_3=4$, $x=3/8$ and $y=11/8$.
\end{example}

\begin{proof}
  The fixed-sector Schur complement, in the basis
  $(1,u,v,z)$, is
  \[
    \mathcal S_*=
    \begin{pmatrix}
       2&0&0&2c\\
       0&-2&0&0\\
       0&0&-2&-2b\\
       -2c&0&2b&-2
    \end{pmatrix}.
  \]
  Since $c^2-b^2=1$, its kernel contains
  $(-c,0,-b,1)^{\mathsf T}$. The upper-left
  $3\times3$ minor is $8$, so
  \[
    \ker\mathcal S_*
      =\mathbb R(-c,0,-b,1)^{\mathsf T}.
  \]
  Thus $Q_{14}(\avec_*)=0$, and the fixed-sector
  kernel is one-dimensional. Moreover,
  $N+4a_0=-2$ and $a_3=c\ne0$, so the
  $\mathbf3$- and $\mathbf4$-sectors are invertible.
  Consequently, $\dim_{\mathbb R}\ker P_{\avec_*}=1.$

  The associated quaternions are
  \[
    \hat q=-\frac32+bu+cz,
    \qquad\text{and}\qquad
    \check q=-\frac32+bu-cz.
  \]
  Both lie outside the level-1 spectrum.
  Indeed, their imaginary parts have norm
  $\sqrt7/2$, and the one-step quartic and
  circle conditions give, respectively,
  \[
    p_{\mathbf1}\left(-\frac32,\frac{\sqrt7}{2}\right)
      =32\ne0
    \qquad\text{and}\qquad
    \left(-\frac32+2\right)^2+\frac74
      =2\ne4.
  \]
  Thus both one-step operators \(h-R_{\widehat q}\) and \(h-R_{\check q}\) are invertible,
  whereas $P_{\avec_*}$ has a nontrivial kernel.
\end{proof}

\section{Conclusion}
\label{sec:conclusion}

The level-0 real spectrum of $h$ is finite, while its level-1 right spectrum is described
in each complex slice by a quartic and a circle
(Section~\ref{sec:one-step-spectrum}). At level~2, the determinant of the two-step
pencil is given by Theorem~\ref{thm:two-step-determinant}, and its real singular
locus is exactly $Z\cup S$ (Theorem~\ref{thm:real-locus}). These results apply to
every pair at its reduced point (Corollary~\ref{cor:general-pairs}).

Corollary~\ref{cor:neither-product} gives the exact minimum of
$\lVert h(w)-w\lambda\rVert^2$ over all octonions $\lambda$.
Away from $a_3=0$, a kernel vector has no octonionic right eigenvalue
exactly when its fixed-sector component is nonzero, and such vectors
exist exactly on $Z$. Thus failure of both natural product equations
already excludes every one-step right eigenvalue for that vector.
Example~\ref{rem:new-spectrum-point} gives a one-dimensional kernel at a
pair whose two products both lie outside the one-step spectrum.


The same questions can be asked of longer sequences of successive right
multiplications, with the order of the factors retained. These sequences
enlarge the family of comparison operators while reducing their generic
symmetry with $h$. For unit factors, let
\[
\mathcal N_k
=\{R_{q_k}\cdots R_{q_1}:|q_i|=1\}
\subset\mathrm{SO}(8),
\qquad\text{and}\qquad \mathcal N_0=\{\pm I\}.
\]
Conway and Smith give the dimensions $n_k=\dim\mathcal N_k$
listed in Table~\ref{tab:higher-level-dimensions}
\cite[Chapter~8, \S8.5, Theorem~9, pp.~94--95]
{conway-2003-quaternions}.
Since $h$ is $\Gtwo$-equivariant, the natural reduced ambient
space for the level-$k$ problem is
\[
\mathcal M_k
=\mathbb R_{>0}\times\bigl(\mathcal N_k/\operatorname{Ad}(\Gtwo)\bigr),
\]
where $\Gtwo$ acts by conjugation on $\mathcal N_k$, leaving the positive
scale factor separate. At level zero, the zero operator is included separately. If $K_k$ denotes the generic
stabilizer of a composite operator, the normalized quotient
has generic dimension
$d_k=n_k-14+\dim K_k$, and the overall positive scale
gives $\dim\mathcal M_k=D_k=d_k+1$ on its principal stratum.

At level 1 a generic operator recovers an
imaginary direction, with stabilizer $\SU(3)$; at level two its vector and
bivector parts recover a plane pointwise, with stabilizer $\SU(2)$.
For levels three and beyond, the key fact is that a triple of octonions that does not lie in a quaternionic subalgebra generates all of \(\mathbb O\), so any octonion automorphism fixing it is the identity. It can then be easily shown (but is outside the scope of the current paper), that generic stabilizers of the corresponding composite operators will also be the identity for $k\geq3$.

\begin{table}[ht]
\centering
\begin{tabular}{c c c c c}
\hline
$k$ & $n_k$ & Generic $K_k$ & $d_k$ & $D_k$ \\
\hline
0 & 0  & $\Gtwo$  & 0 & 1 \\
1 & 7  & $\SU(3)$ & 1 & 2 \\
2 & 13 & $\SU(2)$ & 2 & 3 \\
3 & 18 & $\{e\}$  & 4 & 5 \\
4 & 22 & $\{e\}$   & 8 & 9 \\
\hline
\end{tabular}
\caption{Dimensions of the families of successive right multiplications
and their generic $G_2$-reduced parameter spaces at levels $0$--$4$. The stabilizers and reduced dimensions refer to generic configurations; special configurations may have larger stabilizers.}
\label{tab:higher-level-dimensions}
\end{table}

The spectral locus is a subset of this ambient space, defined
by the vanishing of the $\Gtwo$-invariant determinant
\[
\Delta(\rho A)=\det(h-\rho A\otimes I_V),
\qquad\text{for }\rho>0\text{ and }A\in\mathcal N_k.
\]
Thus the level-one quartic and circle, and the level-two
loci defined by $Q_{14}=0$ and the sphere equation, describe
spectral subsets of reduced ambient spaces of dimensions
two and three, respectively. At three steps, generic residual
symmetry disappears, but $\mathcal N_3$ still has codimension
ten in $\mathrm{SO}(8)$. The higher-level problems therefore
remain constrained spectral problems even without a
nontrivial generic stabilizer. At seven steps,
$\mathcal N_7=\mathrm{SO}(8)$
\cite[Chapter~8, \S8.4, Theorem~7, p.~93]{conway-2003-quaternions}, so all levels can be viewed as
restrictions of the same determinant to nested families in
the full reduced space
$(\mathbb R_{>0} \times \mathrm{SO}(8))/\operatorname{Ad}(\Gtwo)$.

\appendix
\section{Frame, Cayley--Dickson rules and branching}
\label{app:su2-bases}

We use the same notation as in Section~\ref{subsec:frame}.
Multiplication on $\mathbb{H}$ is given by
\begin{equation}
\label{eq:frame-products}
  \hat\rho_i\hat\rho_j
    =-\delta_{ij}+\sum\nolimits_{k=1}^{3}\epsilon_{ijk}\hat\rho_k,
  \qquad\text{for }1\leq i,j\leq3.
\end{equation}

For every unit \(\ell\in\hat B\), the Cayley--Dickson doubling
\cite[Section~2.2, equations~(2) and~(3)]{BaezOcto}
(see also the right-coordinate formula in
\cite[Chapter~6, Appendix~1, p.~80]{conway-2003-quaternions}) gives
\(\mathbb O=\mathbb H\oplus\mathbb H\ell\),
\(\hat B=\mathbb H\ell\) and gives the following rules for any $p,r,s\in\mathbb H$:
\begin{equation}
\label{eq:cayley-dickson-rules}
\begin{aligned}
  p(r\ell)&=(rp)\ell,\\
  (r\ell)p&=(r\bar p)\ell,\\
  \text{and}\quad (p\ell)(s\ell)&=-\bar s p.
\end{aligned}
\end{equation}
These rules hold for every unit \(\ell\perp\mathbb H\).
In particular, taking a unit $\hat\xi\in\hat B$, we obtain:
\begin{equation}
\label{eq:complement-products}
\begin{aligned}
  r\hat\xi&=\hat\xi\bar r,\\
  (\hat\xi r)p&=\hat\xi(p r),\\
  \text{and}\quad(r \hat\xi)p&=(r \bar p)\hat\xi.
\end{aligned}
\end{equation}
The quaternion sums used in the block calculations are
\begin{equation}
\label{eq:quaternion-sums}
\begin{aligned}
  \sum\nolimits_{j=1}^{3}\hat\rho_jp\hat\rho_j
    &=-p-2\bar p,
  &\text{and}\quad\sum\nolimits_{k=0}^{3}\hat\rho_k r \overline{\hat\rho_k}
    &=4\Re r,\\
  \sum\nolimits_{k=0}^{3}\overline{\hat\rho_k} r \overline{\hat\rho_k}
    &=-2\bar r,
  &\text{and}\quad\sum\nolimits_{k=0}^{3}\overline{\hat\rho_k}r\hat\rho_k
    &=4\Re r.
\end{aligned}
\end{equation}

The group \(K\) fixes \(\mathbb H\) pointwise (it fixes \(u\) and \(v\)) and
acts by \cite[Chapter~6, Appendix~2, p.~81]{conway-2003-quaternions}
\begin{equation}
\label{eq:frame-stabilizer-action}
  p+r\ell\longmapsto p+(gr)\ell,\qquad\text{for }g\in\Sp(1)\cong K.
\end{equation}
Let \(\mathbf1,\mathbf3,\mathbf4\) denote the irreducible real \(K\)-modules
of dimensions \(1,3,4\) (the \(K\)-types).
Here \(\mathbf3\) is the real adjoint module, and \(\mathbf4\) is the
underlying real defining module \(\mathbb C^2\), of quaternionic type
\cite[Section~2.2, pp.~5--6, Lemma~2.3]{mashimo-2019-nonflat};
see also \cite[Section~26.3, Proposition~26.24]{fulton-1991-representation}.
In these coordinates \(\hat B\cong\mathbb H\), with left quaternion
multiplication, is the type \(\mathbf4\), has no fixed
vector, and satisfies
\(\hat B_{\mathbb C}\cong\mathbb C^2\oplus\mathbb C^2\), two copies of the
defining (spin-\(\tfrac12\)) module \(\mathbb C^2\) of \(\SU(2)\).
Its commutant is the four-dimensional algebra of right quaternion
multiplications in these coordinates, equivalently the octonionic maps
\(\hat\xi\mapsto\gamma\hat\xi\), by
\eqref{eq:complement-products}.

\begin{lemma}
\label{lem:su2-branching}
Let \(W=\mathbb O\otimes V\), \(\mathbb O=\mathbb H\oplus\hat B\) and
\(V=A\oplus B\).
As real \(K\)-modules, \(\mathbb H\otimes A\) is trivial of dimension \(12\),
and
\begin{equation*}
  (\mathbb H\otimes B)\oplus(\hat B\otimes A)
    \cong\mathbf4^{\oplus7}\qquad\text{and}\qquad
  \hat B\otimes B\cong\mathbf1^{\oplus4}\oplus\mathbf3^{\oplus4}.
\end{equation*}
Consequently \(\dim W^K=16\), and the \(K\)-isotypic sectors of
Section~\ref{subsec:sectors} satisfy
\(W_{\mathbf3}\subset\hat B\otimes B\), with \(\dim W_{\mathbf3}=12\), and
\(W_{\mathbf4}=(\mathbb H\otimes B)\oplus(\hat B\otimes A)\), with
\(\dim W_{\mathbf4}=28\).
\end{lemma}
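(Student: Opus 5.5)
We decompose each of the four tensor blocks separately, using the explicit action \eqref{eq:frame-stabilizer-action}: $K\cong\Sp(1)$ fixes $\mathbb H$ pointwise and acts on $\hat B=\mathbb H\ell$ by left multiplication $r\ell\mapsto (gr)\ell$. Since $K\subset\Gtwo$ acts on $V$ through the fixed isometry $V\cong\Im\mathbb O$, it acts trivially on $A$ and by the same module $\mathbf4$ on $B$.

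First, $\mathbb H\otimes A$ is a tensor product of trivial modules, so it is trivial of dimension $4\cdot3=12$. Next, $\mathbb H\otimes B\cong\mathbb R^4\otimes\mathbf4\cong\mathbf4^{\oplus4}$ and $\hat B\otimes A\cong\mathbf4\otimes\mathbb R^3\cong\mathbf4^{\oplus3}$. Together these give $\mathbf4^{\oplus7}$.

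The only genuine computation is $\hat B\otimes B\cong\mathbf4\otimes\mathbf4$. We identify it, via the metric, with $\End_{\mathbb R}(\hat B)\cong\End_{\mathbb R}(\mathbb H)$, on which $K$ acts by conjugation $T\mapsto L_gTL_g^{-1}$. The standard isomorphism $\mathbb H\otimes\mathbb H^{\mathrm{op}}\cong M_4(\mathbb R)$, $p\otimes q\mapsto L_pR_q$, turns this action into $g\cdot(p\otimes q)=(gpg^{-1})\otimes q$. Conjugation on $\mathbb H=\mathbb R\oplus\Im\mathbb H$ is $\mathbf1\oplus\mathbf3$, the adjoint representation through $\Sp(1)\to SO(3)$. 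Tensoring with the trivial four-dimensional factor $\mathbb H^{\mathrm{op}}$ gives $\mathbf1^{\oplus4}\oplus\mathbf3^{\oplus4}$. The fixed part consists of the maps $R_q$ in the $\mathbb H$-coordinates, which are the maps $\hat\xi\mapsto\gamma\hat\xi$ by \eqref{eq:complement-products}. As a check by complexification, $(\mathbb C^2\oplus\mathbb C^2)^{\otimes2}=4(\mathbb C^2\otimes\mathbb C^2)=4(\mathbb C\oplus S^2\mathbb C^2)$, which confirms the multiplicities.

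Finally, we collect the pieces using $W=(\mathbb H\otimes A)\oplus(\mathbb H\otimes B)\oplus(\hat B\otimes A)\oplus(\hat B\otimes B)$. The types $\mathbf1,\mathbf3,\mathbf4$ are pairwise non-isomorphic. Hence $W^K=(\mathbb H\otimes A)\oplus(\hat B\otimes B)^K$ has dimension $12+4=16$. The $\mathbf3$-isotypic part lies in $\hat B\otimes B$ and has dimension $12$. The $\mathbf4$-isotypic part is exactly $(\mathbb H\otimes B)\oplus(\hat B\otimes A)$, of dimension $28$.

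The main obstacle is the $\hat B\otimes B$ step, specifically making sure that the $K$-action on $B$ really matches left multiplication on $\hat B$ under the fixed isometry, so that the conjugation picture is valid. This compatibility holds by $\Gtwo$-equivariance of $V\cong\widehat V$. The remaining steps are bookkeeping.
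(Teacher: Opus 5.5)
Your proof is correct. For $\mathbb H\otimes A$, $\mathbb H\otimes B$, $\hat B\otimes A$ and the final isotypic bookkeeping it coincides with the paper's argument.

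The one genuinely different step is $\hat B\otimes B$. The paper complexifies, writes $(\hat B\otimes B)_{\mathbb C}\cong4(\mathbb C^2\otimes\mathbb C^2)$, and applies Clebsch--Gordan. That is quicker, but it only produces multiplicities, and it tacitly descends from complex to real types using that $\mathbf1$ and $\mathbf3$ are of real type. You instead stay over $\mathbb R$: you identify $\hat B\otimes B\cong\End_{\mathbb R}(\mathbb H)\cong\mathbb H\otimes\mathbb H^{\mathrm{op}}$, with $K$ acting by $\operatorname{Ad}\otimes1$, which gives the real decomposition directly. Complexification then serves only as a check.

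Your route also exhibits the summands explicitly. The fixed maps $R_q$ in coordinates are the tensors $\varepsilon_\gamma$. The $\mathbf3$-part $\Im\mathbb H\otimes\mathbb H^{\mathrm{op}}$, spanned by the maps $L_xR_q$ with $x\in\Im\mathbb H$, is exactly the span of the $w_{X,\gamma}$, since $R_\gamma\circ X|_{\hat B}$ becomes $L_xR_{\bar\gamma}$ in the $\mathbb H$-coordinates. The paper establishes these descriptions separately in the proof of Theorem~\ref{thm:sectors}, where it invokes the same identification $\mathbb H\otimes\mathbb H^{\mathrm{op}}\cong M_4(\mathbb R)$ for injectivity. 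So your version of the lemma effectively absorbs part of that later argument.
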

\begin{proof}
The group \(K\) is trivial on \(\mathbb H\) and on \(A\), so
\(\mathbb H\otimes A\cong\mathbf1^{\oplus12}\).
Since \(B\cong\hat B\), the summands \(\mathbb H\otimes B\) and
\(\hat B\otimes A\) give four and three copies of \(\mathbf4\), respectively.
For the remaining tensor product,
\((\hat B\otimes B)_{\mathbb C}\cong4(\mathbb C^2\otimes\mathbb C^2)
\cong4(\mathbf1\oplus\mathbf3)\), by the Clebsch--Gordan decomposition
\(\mathbb C^2\otimes\mathbb C^2\cong S^2\mathbb C^2\oplus\Lambda^2\mathbb C^2\)
into irreducible modules of dimensions \(3\) and \(1\)
\cite[p.~428 and Theorem~C.1]{hall-2015-lie}.
Thus \(\hat B\otimes B\) gives four trivial and four adjoint copies.
Since \(\mathbb H\otimes A\) has only type \(\mathbf1\),
\((\mathbb H\otimes B)\oplus(\hat B\otimes A)\) only type \(\mathbf4\) and
\(\hat B\otimes B\) only types \(\mathbf1,\mathbf3\), the isotypic sectors are as
stated, and \(\dim W^K=12+4\).
\end{proof}

Recall the real $\SU(3)_n$-modules of Section~\ref{sec:one-step-spectrum},
whose real irreducibility is standard
\cite[Section~2, pp.~58--60]{moroianu-2008-deformations}.
The group $\SU(3)_n$ acts on $\mathbf8=\su(n^\perp,J_n)$ by the adjoint
action, so that the complexification of $\mathbf8$ is
$\mathfrak{sl}_3(\mathbb C)$. These descriptions and the branching rules
are those of
\cite[Definition~5.6, Propositions~5.8 and~5.11]{grigorian-2026-octonionic}.

\begin{lemma}
\label{lem:su3-to-su2-branching}
Let \(n\in A\subset V\) be a unit, the copy of a unit \(\hat n\in\hat A\), and
\(\SU(3)_n=\Stab_{\Gtwo}(n)\). Then \(K\subset\SU(3)_n\) is the standard block
embedding, and the real modules \(\mathbf1,\mathbf6,\mathbf8,\mathbf{12}\) of
Section~\ref{sec:one-step-spectrum} restrict as
\begin{equation}
\label{eq:su3-to-su2-branching}
\begin{aligned}
  \mathbf1\downarrow_K&=\mathbf1,
  &\text{and}\quad\mathbf6\downarrow_K&=2\cdot\mathbf1\oplus\mathbf4,\\
  \mathbf8\downarrow_K&=\mathbf1\oplus\mathbf3\oplus\mathbf4,
  &\text{and}\quad\mathbf{12}\downarrow_K
    &=2\cdot\mathbf1\oplus2\cdot\mathbf3\oplus\mathbf4,
\end{aligned}
\end{equation}
where the types on the left are \(\SU(3)_n\)-types and those on the right
are \(K\)-types. Consequently each of the four modules has a nonzero
\(K\)-fixed vector.
\end{lemma}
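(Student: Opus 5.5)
The plan is to fix a unit $\hat n\in\hat A$ and choose the frame so that $u=\hat n$. Then $K=\Stab_{\Gtwo}(u,v)$ is the stabilizer in $\SU(3)_n$ of the unit vector $v\in n^\perp$. Under the identification $(n^\perp,J_n)\cong\mathbb C^3$, this is the subgroup $\SU(2)$ fixing one complex line $\mathbb C v$ pointwise and acting on its Hermitian complement $\mathbb C^2$. That complement is $\hat B$, since $J_n v=u\times v=z\in\mathbb H$. This is the standard block embedding. It also matches \eqref{eq:frame-stabilizer-action}: $K$ acts on $\hat B\cong\mathbb H$ by left multiplication, which is the defining $\mathbb C^2$ viewed as a real module.

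With the embedding fixed, I restrict the complexified $\SU(3)$-modules to $\SU(2)$ using $\mathbf3\downarrow=\mathbf1_{\mathbb C}\oplus\mathbb C^2$. I then translate back to real $K$-types using three facts. First, $\mathbb C^2\oplus\overline{\mathbb C^2}$ complexifies a single real $\mathbf4$, because $\mathbf4_{\mathbb C}\cong 2\,\mathbb C^2$ and $\mathbb C^2\cong\overline{\mathbb C^2}$. Second, $\mathbf3_{\mathbb C}$ is $S^2\mathbb C^2$. Third, a complex trivial pair $\mathbf1_{\mathbb C}\oplus\overline{\mathbf1_{\mathbb C}}$ gives $2\cdot\mathbf1$.
\begin{itemize}
\item For the module $\mathbb R n$ the restriction is trivial.
\item For $\mathbf6=n^\perp$ the complexification is $\mathbf3\oplus\bar{\mathbf3}\downarrow=2\cdot\mathbf1_{\mathbb C}\oplus 2\,\mathbb C^2$, so $\mathbf6\downarrow_K=2\cdot\mathbf1\oplus\mathbf4$. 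Concretely, this is $\mathrm{span}(v,z)\oplus\hat B$.
\item For $\mathbf8$ the complexification is $\mathfrak{sl}_3\mathbb C=\mathbf3\otimes\bar{\mathbf3}-\mathbf1$. It restricts to $(\mathbf1+\mathbb C^2)\otimes(\mathbf1+\mathbb C^2)-\mathbf1=2\,\mathbb C^2\oplus(\mathbf1_{\mathbb C}\oplus S^2\mathbb C^2)$, which gives $\mathbf1\oplus\mathbf3\oplus\mathbf4$ over $\mathbb R$.
\item For $\mathbf{12}$ the complexification is $S^2\mathbf3\oplus S^2\bar{\mathbf3}$. Here $S^2(\mathbf1+\mathbb C^2)=\mathbf1_{\mathbb C}\oplus\mathbb C^2\oplus S^2\mathbb C^2$, and doubling gives $2\cdot\mathbf1\oplus2\cdot\mathbf3\oplus\mathbf4$.
\end{itemize}

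The main obstacle is bookkeeping of real versus complex types. The $\mathbf8$ module is of real type, so its complexification is irreducible, while $\mathbf6$ and $\mathbf{12}$ are complex-type modules whose complexifications split into conjugate pairs. I will justify each real decomposition in two steps. First I compare complex characters. Then I use the fact that a real $K$-module is determined up to isomorphism by its complexification: the $K$-types $\mathbf1$ and $\mathbf3$ are of real type, and $\mathbf4$ is of quaternionic type with complexification $2\,\mathbb C^2$. As a check, the dimensions are $1$, $2+4$, $1+3+4$ and $2+6+4$, which agree with $1,6,8,12$.

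The final assertion follows because every right-hand side contains $\mathbf1$ with positive multiplicity, so each module has a nonzero $K$-fixed vector. For $\mathbf8$ this can also be seen directly: the element $J_n$ restricted to $\mathbb C^2$, made trace-free, commutes with $K$ and is fixed by it.
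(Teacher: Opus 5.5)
Your proof is correct and takes the same route as the paper: you identify $K$ as the $\SU(2)$ that fixes the complex line $A\cap n^\perp$ (your $\mathrm{span}(v,z)$) pointwise and acts on $B\cong\mathbb C^2$, then branch via Clebsch--Gordan. Your complexification bookkeeping supplies the details the paper leaves implicit. The normalization $u=\hat n$ is harmless, but you should say why: $K$ is the pointwise stabilizer of $\mathbb H$, so it does not depend on which orthonormal pair in $\Im\mathbb H$ is used to generate $\mathbb H$.
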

\begin{proof}
The group \(K\) fixes \(\mathbb H\), hence \(A\), pointwise, so it fixes \(n\)
and lies in \(\SU(3)_n\).
It fixes pointwise the complex line \(A\cap n^\perp\) of
\((n^\perp,J_n)\cong\mathbb C^3\), which is \(J_n\)-stable because
\(\hat n\times\hat A\subset\hat A\), and it acts on the complementary
\(B\cong\mathbb C^2\) as \(\SU(2)\).
This is the standard block embedding, under which
\(\mathbb C^3\cong\mathbb C\oplus\mathbb C^2\) as a complex \(K\)-module,
with \(K\) trivial on \(\mathbb C\).
The branchings follow from this and the Clebsch--Gordan decomposition
\cite[p.~428 and Theorem~C.1]{hall-2015-lie};
for the adjoint module compare \cite[Section~6.9, Exercise~4]{hall-2015-lie}.
\end{proof}

The sector calculations produce diagonal-plus-skew real \(4\times4\) matrices.
\begin{lemma}
\label{lem:diagonal-skew-determinant}
For real \(d_0,d_1\) and \(\mathbf w,\mathbf k\in\mathbb R^3\), let
\(\Lambda=\diag(d_0,d_1,d_1,d_1)\) and
\(\Theta=\bigl(\begin{smallmatrix}0&-\mathbf w^{\mathsf T}\\
\mathbf w&[\mathbf k]_\times\end{smallmatrix}\bigr)\), where
\([\mathbf k]_\times\boldsymbol\eta=\mathbf k\times\boldsymbol\eta\), a skew
\(4\times4\) matrix. Then
\begin{equation*}
  \det(\Lambda+\Theta)
    =d_0d_1^3+d_1^2|\mathbf w|^2+d_0d_1|\mathbf k|^2+(\mathbf w\cdot\mathbf k)^2.
\end{equation*}
\end{lemma}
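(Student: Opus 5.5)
The plan is to reduce to a quaternion calculation. I will use the standard fact that $\det(\Lambda+\Theta)$ for a diagonal matrix plus a skew-symmetric one expands as a sum of principal products. In this expansion, only even-order principal minors of the skew part $\Theta$ survive, and each such minor equals the square of a Pfaffian. So for $4\times 4$ matrices,
\[
\det(\Lambda+\Theta)=\prod_i\lambda_i+\sum_{i<j}\Bigl(\prod_{k\ne i,j}\lambda_k\Bigr)\Theta_{ij}^2+\operatorname{Pf}(\Theta)^2 .
\]
Here I take $\lambda=(d_0,d_1,d_1,d_1)$. Odd principal minors of a skew matrix vanish, so they contribute nothing.

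Next I will evaluate each group of terms separately. The term with no $\Theta$ factor is $d_0d_1^3$.

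For the $2\times2$ skew minors, the pairs $(0,j)$ with $j=1,2,3$ give $\Theta_{0j}^2=w_j^2$, each with complementary diagonal product $d_1^2$. Together they contribute $d_1^2|\mathbf w|^2$. The pairs $(i,j)$ with $1\le i<j\le3$ have entries of $[\mathbf k]_\times$, which are $\pm k_l$ with $l$ the remaining index. Their complementary product is $d_0d_1$, so they contribute $d_0d_1|\mathbf k|^2$.

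The $4\times4$ term is the squared Pfaffian:
\[
\operatorname{Pf}(\Theta)=\Theta_{01}\Theta_{23}-\Theta_{02}\Theta_{13}+\Theta_{03}\Theta_{12} .
\]
Substituting $\Theta_{j0}=w_j$ (so $\Theta_{0j}=-w_j$), together with $\Theta_{23}=-k_1$, $\Theta_{13}=k_2$ and $\Theta_{12}=-k_3$ from the cross-product matrix, gives $\operatorname{Pf}(\Theta)=\pm(\mathbf w\cdot\mathbf k)$. Its square is $(\mathbf w\cdot\mathbf k)^2$. Adding the four groups yields the stated formula.

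The only step needing care is the sign bookkeeping in the Pfaffian. The answer is squared, though, so I only need all three products to carry a consistent common sign. As a cross-check, I can test $\mathbf w=e_1$, $\mathbf k=e_1$, $d_0=d_1=0$. The matrix then splits into the blocks $\{0,1\}$ and $\{2,3\}$, each a $2\times2$ skew block with unit entries, so the determinant is $1\cdot1=1=(\mathbf w\cdot\mathbf k)^2$, which agrees.

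An alternative route, if it is preferred, is to identify $\Lambda+\Theta$ with $d_1I+(d_0-d_1)e_0e_0^{\mathsf T}+\Theta$, where $\Theta$ acts as $L_a+R_b$ on $\mathbb H$ for suitable imaginary $a,b$. One would then use a rank-one update together with $\det(d_1+\Theta)=(d_1^2+|a+b|^2)(d_1^2+|a-b|^2)$. I will nevertheless keep the direct expansion as the proof, since it is shorter.
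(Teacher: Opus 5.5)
Your proposal is correct and is essentially the paper's proof: the principal-minor (Pfaffian) expansion of a diagonal-plus-skew matrix, in which the empty index set gives $d_0d_1^3$, the two-index terms give $d_1^2|\mathbf w|^2+d_0d_1|\mathbf k|^2$, and the full Pfaffian is $\mathbf w\cdot\mathbf k$. In fact your entries give exactly $+\mathbf w\cdot\mathbf k$, so no sign is lost.

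One caution about the aside you set aside. Writing $\Theta=L_a+R_b$ gives $\mathbf w=a+b$ and $\mathbf k=a-b$, and the correct factorization is $\det(d_1+\Theta)=\bigl(d_1^2+(|a|+|b|)^2\bigr)\bigl(d_1^2+(|a|-|b|)^2\bigr)$. Your formula $(d_1^2+|a+b|^2)(d_1^2+|a-b|^2)$ is wrong: for $a=u$, $b=v$ and $d_1=0$ it gives $4$ instead of $0$.
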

\begin{proof}
For a diagonal matrix \(\Lambda\) and a skew matrix \(\Theta\), the standard
Pfaffian expansion is \(\det(\Lambda+\Theta)=\sum\nolimits_{\mathcal I}
\operatorname{Pf}(\Theta_{\mathcal I})^2\prod\nolimits_{j\notin\mathcal I}\Lambda_{jj}\),
the sum over \(\mathcal I\subset\{0,1,2,3\}\) with \(|\mathcal I|\) even
(principal submatrices \(\Theta_{\mathcal I}\), empty Pfaffian \(1\)).
For \(\Lambda=\diag(d_0,d_1,d_1,d_1)\), the empty index set gives
\(d_0d_1^3\), the full Pfaffian is \(\mathbf w\cdot\mathbf k\), and the
two-index terms give \(d_1^2|\mathbf w|^2+d_0d_1|\mathbf k|^2\).
\end{proof}

\section{The \texorpdfstring{$W_{\mathbf1}$}{W1} calculation and \texorpdfstring{$Q_{14}$}{Q14}}
\label{app:fixed-sector-calculation}

This appendix supplies the fixed-sector blocks, Schur reduction,
polynomial matrix and invariant-polynomial argument used in
Section~\ref{subsec:fixed-sector-block}.

We use the quaternionic normal form of Proposition~\ref{prop:w1-normal-form},
\(\Phi:W_{\mathbf1}\longrightarrow\mathbb H^3\oplus\mathbb H\),
with \(\Phi(\vartheta\otimes\rho_j)=\vartheta\) in slot \(j=1,2,3\).
The slots \(1,2,3\) represent \(\mathbb H\otimes A\subset W_+\),
and slot \(0\) represents \(\{\varepsilon_\gamma:\gamma\in\mathbb H\}\subset W_-\),
with \(\Phi(\varepsilon_\gamma)=z\gamma\bar z\).

Write \(h_{AA},h_{A0},h_{0A},h_{00}\) for the blocks of
\(\Phi h\Phi^{-1}\), and use the same subscripts for
\(\Phi P_{\avec}\Phi^{-1}\). The scalar part is \(R_{\hat q}\) in every slot,
by \eqref{eq:w1-scalar-part}, so that \(P_{AA}=h_{AA}-R_{\hat q}\),
\(P_{A0}=h_{A0}\), \(P_{0A}=h_{0A}\) and \(P_{00}=h_{00}-R_{\hat q}\).
The coordinate metric $G=\diag(I_{12},4I_4)$ gives
$h_{A0}=4h_{0A}^{\mathsf T}$ in Euclidean coordinate matrices.

Use $x,y,N$ from~\eqref{eq:invariants}, $\hat q,\check q$ from
\eqref{eq:normal-reversed}, and $t=x-y$ from
Section~\ref{subsec:fixed-sector-block}. For the calculations below, define
\begin{equation*}
  q_\perp=a_1u+a_2v\qquad\text{and}\qquad N_3=|\hat q+3|^2=N+6a_0+9.
\end{equation*}
We also fix the real vectors \(\mathbf p=\Im\hat q=(a_1,a_2,a_3)\) and
\(\mathbf m=\Im\check q=(a_1,a_2,-a_3)\) in the basis \((u,v,z)\), oriented by
\(u\times v=z\), so that
\(|\mathbf p|^2=|\mathbf m|^2=x+y\) and \(\mathbf p\cdot\mathbf m=x-y=t\).
For a real 3-vector \(\mathbf v\), \([\mathbf v]_\times\) denotes the matrix of
\(\boldsymbol\eta\mapsto\mathbf v\times\boldsymbol\eta\).

\subsection{The normal form and the \texorpdfstring{$\mathbb H^3$}{H3} block}
\label{subsec:fixed-sector-h-block}

\begin{lemma}
\label{lem:fixed-sector-h}
For \((\vartheta_1,\vartheta_2,\vartheta_3;\eta)\in\mathbb H^3\oplus\mathbb H\),
with \(\boldsymbol\vartheta=(\vartheta_1,\vartheta_2,\vartheta_3)\),
the four blocks of \(\Phi h\Phi^{-1}\) are
\begin{equation}
\label{eq:fixed-sector-h-blocks}
\begin{aligned}
 (h_{AA}\boldsymbol\vartheta)_i&=\sum_{j=1}^{3}(\hat\rho_i\hat\rho_j)\vartheta_j,
 &\text{and}\quad (h_{A0}\eta)_i&=-4\bar z\bar\eta z\hat\rho_i,\\
 h_{0A}\boldsymbol\vartheta&=z\Bigl(\sum_{j=1}^{3}\overline{\vartheta_j\hat\rho_j}\Bigr)\bar z,
 &\text{and}\quad h_{00}\eta&=-4\Re\eta.
\end{aligned}
\end{equation}
\end{lemma}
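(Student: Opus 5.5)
The plan is to compute $h$ directly from Definition~\ref{def:operator-h} on the two kinds of basis elements of $W_{\mathbf1}$ and then translate back through $\Phi$. By bilinearity, $h(p\otimes b)=\sum_i((\hat e_i\hat b)p)\otimes e_i$, as already used in the proof of Proposition~\ref{prop:four-block-form}. Since $h$ is $\Gtwo$-equivariant (Proposition~\ref{prop:equivariance}), it preserves $W_{\mathbf1}=W^K$. So $h\Phi^{-1}(\boldsymbol\vartheta;\eta)$ again has the form $\sum_i\vartheta'_i\otimes\rho_i+\varepsilon_{\gamma'}$, and it suffices to read off the $\mathbb H\otimes A$ component and the $\varepsilon$-component. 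I will use the orthonormal basis $\{\hat\rho_i\}\cup\{\hat\xi_\alpha\}$ of $\widehat V$ for the sum over $i$.

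\emph{Slots $1,2,3$ as input.} Take $w=\vartheta_j\otimes\rho_j$ with $\vartheta_j\in\mathbb H$. Then $h(w)=\sum_i((\hat e_i\hat\rho_j)\vartheta_j)\otimes e_i$.
\begin{itemize}
\item For $e_i=\rho_i\in A$, the coefficient is $(\hat\rho_i\hat\rho_j)\vartheta_j\in\mathbb H$. This gives $h_{AA}$.
\item For $e_i=\xi_\alpha\in B$, the coefficient is $(\hat\xi_\alpha\hat\rho_j)\vartheta_j$. Write $\hat\xi_\alpha=\hat\rho_\alpha\ell$ and apply the Cayley--Dickson rules \eqref{eq:cayley-dickson-rules}: this gives $((\hat\rho_\alpha\ell)\hat\rho_j)\vartheta_j=((\hat\rho_\alpha\overline{\hat\rho_j}\,\overline{\vartheta_j})\ell)$, up to the ordering conventions that I will check carefully. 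Equivalently, this is $c\,\hat\xi_\alpha$ with $c=\overline{\vartheta_j\hat\rho_j}$, using $r\hat\xi=\hat\xi\bar r$ from \eqref{eq:complement-products}.
\end{itemize}
Summing over $\alpha$ gives exactly $\varepsilon_\gamma$ with $\gamma=\sum_j\overline{\vartheta_j\hat\rho_j}$. Then $\eta=z\gamma\bar z$ yields the stated $h_{0A}$.

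\emph{Slot $0$ as input.} Take $w=\varepsilon_\gamma=\sum_\beta(\gamma\hat\xi_\beta)\otimes\xi_\beta$. Then $h(w)=\sum_i\sum_\beta((\hat e_i\hat\xi_\beta)(\gamma\hat\xi_\beta))\otimes e_i$.
\begin{itemize}
\item For $e_i=\rho_i$, I use $\hat\rho_i\hat\xi_\beta=\hat\xi_\beta\overline{\hat\rho_i}$, which is again in $\hat B$. The product of two elements of $\hat B$ is evaluated by $(p\ell)(s\ell)=-\bar sp$. This reduces $\sum_\beta$ to a quaternion sum of the type in \eqref{eq:quaternion-sums}, namely $\sum_k\hat\rho_k r\overline{\hat\rho_k}=4\Re r$ or $-2\bar r$. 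The result should be a multiple $-4\bar\gamma\hat\rho_i$. Rewriting via $\gamma=\bar z\eta z$ gives $-4\bar z\bar\eta z\hat\rho_i$.
\item For $e_i=\xi_\alpha$, the term $(\hat\xi_\alpha\hat\xi_\beta)(\gamma\hat\xi_\beta)$ is a product of a quaternion with an element of $\hat B$, so it lies in $\hat B$. After summing, it must be $K$-invariant, hence of the form $\varepsilon_{\gamma''}$. I will compute $\gamma''$ by the same sums and expect $\gamma''=-4\Re\gamma$.
\end{itemize}
As a consistency check, $\varepsilon_1=\sum\hat\xi_\beta\otimes\xi_\beta$ is part of the $\Gtwo$-singlet and the $\Lambda^2_{14}$ piece, which fits the eigenvalue $-4$ on $\Im$-type labels. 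The relation $h_{A0}=4h_{0A}^{\mathsf T}$ forced by self-adjointness and the metric $G$ then cross-checks both off-diagonal blocks.

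The main obstacle is sign and ordering bookkeeping in the Cayley--Dickson products, in particular the $\hat B\cdot\hat B$ products and the conjugation by $z$ coming from $\Phi$. My first step will be to fix $\ell$ and verify each rule on one basis element. I will then use the two independent checks: $\Gtwo$-eigenvalues from Proposition~\ref{prop:real-spectrum} (for example, $\sum_i\hat\rho_i\otimes\rho_i+\varepsilon_1$ should be an eigenvector with eigenvalue $-7$), and the adjoint relation.
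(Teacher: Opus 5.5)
Your proposal is correct and follows essentially the paper's route: evaluate $h(p\otimes b)=\sum_i((\hat e_i\hat b)p)\otimes e_i$ over the basis $\{\rho_j,\xi_\alpha\}$ using the Cayley--Dickson rules. The only nontrivial sum is $\sum_k\hat\rho_k r\overline{\hat\rho_k}=4\Re r$, which enters only in $h_{00}$; in $h_{A0}$ each term is already $-\overline{\hat\rho_\beta\gamma}\,\hat\rho_\beta\hat\rho_i=-\bar\gamma\hat\rho_i$, and your predicted blocks are all right. One side remark is off: $\varepsilon_1=\sum_\beta\hat\xi_\beta\otimes\xi_\beta$ is symmetric, so it lies in $\mathbf1\oplus S^2_0V$ rather than $\Lambda^2_{14}V$, with $h\varepsilon_1=\tfrac47(-7)\sum_i\hat e_i\otimes e_i$; your $-7$ eigenvector check and the adjoint check remain valid.
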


\begin{proof}
We evaluate the defining formula~\eqref{eq:h-definition} for \(h\), over the
orthonormal basis \(\{\rho_1,\rho_2,\rho_3,\xi_0,\ldots,\xi_3\}\) of \(V\).
Take a unit \(\ell\in\hat B\) and \(\hat\xi_\alpha=\hat\rho_\alpha\ell\).
The Cayley--Dickson rules~\eqref{eq:cayley-dickson-rules} of
Appendix~\ref{app:su2-bases} give
\((\ell a)p=\overline{pa}\,\ell\) and
\((a\ell)( r\ell)=-\bar r a\) for \(a,p, r\in\mathbb H\). For a generic element \(\vartheta\ell\) of
\(\hat B\), with \(\vartheta\in\mathbb H\), the contribution
from \(\hat B\otimes B\) is computed before summing:
\begin{equation*}
 ((\vartheta\ell)\hat\xi_\alpha)( r\hat\xi_\alpha)
   =-\bigl((\hat\rho_\alpha r\overline{\hat\rho_\alpha})\vartheta\bigr)\ell,
 \qquad\text{and}\qquad
 \sum_{k=0}^{3}\hat\rho_k r\overline{\hat\rho_k}=4\Re r.
\end{equation*}
Thus the sum over \(\alpha\) is \(-4(\Re r)\,\vartheta\ell\).
For the other blocks we use the first sum of~\eqref{eq:quaternion-sums} and
the rule \(\hat\rho_i\hat\rho_j=-\delta_{ij}+\epsilon_{ijk}\hat\rho_k\).
Summing the four contributions, with \(\eta=z r\bar z\), gives
\eqref{eq:fixed-sector-h-blocks}.
\end{proof}

Let $M_{ij}=\hat\rho_i\hat\rho_j$, so $h_{AA}=L_M$. Set $\mathbf r=(u,v,z)^{\mathsf T}$; then $M=-\mathbf r\mathbf r^*$ and $\mathbf r^*\mathbf r=3$.
Thus $\mathbb H^3=U_3\oplus U_0$, where $U_3=\mathbf r\mathbb H$ and $U_0=\ker\mathbf r^*$ orthogonally, with
projections $\mathbf r\mathbf r^*/3$ and $I-\mathbf r\mathbf r^*/3$ and quaternionic ranks one and two.
On these right submodules $L_M$ is $-3$ and $0$, respectively, so
$P_{AA}$ is $-R_{\hat q+3}$ and $-R_{\hat q}$.
Right multiplication by a quaternion $p$ on $\mathbb H$ has real
determinant $|p|^4$, giving the determinant and inverse below.
It follows that
\begin{equation}
\label{eq:fixed-sector-aa-inverse}
\begin{aligned}
 \det P_{AA}&=N^4N_3^2,\\
 P_{AA}^{-1}&=-\Pi_{U_0}R_{\hat q^{-1}}
                -\Pi_{U_3}R_{(3+\hat q)^{-1}},\qquad\text{for }NN_3\ne0,\\
 \Pi_{U_0}&=I-\mathbf r\mathbf r^*/3,\qquad\text{and}\qquad \Pi_{U_3}=\mathbf r\mathbf r^*/3.
\end{aligned}
\end{equation}

\subsection{Schur reduction and the closed form}

\begin{lemma}
\label{lem:fixed-sector-schur}
Suppose \(NN_3\ne0\). Define
\(\zeta_1=(z\hat q\bar z)^{-1}\) and
\(\zeta_2=(3+z\hat q\bar z)^{-1}\). The Schur complement
\(\mathcal S=P_{00}-P_{0A}P_{AA}^{-1}P_{A0}\) is given by

\begin{equation}
    \mathcal S\eta =\mathcal A\eta+\eta\mathcal B+\mathcal C\bar\eta+\bar\eta\mathcal D,
\end{equation}
where
\begin{equation}
\label{eq:fixed-sector-schur-coefficients}
\begin{aligned}
 \mathcal A&=-2+\tfrac43(2\bar\zeta_1+6\zeta_1+\bar\zeta_2),
 &\quad\mathcal B&=-\hat q-\tfrac{16}{3}(\zeta_1-\zeta_2),\\
 \mathcal C&=-2-\tfrac83(\bar\zeta_1-\bar\zeta_2),
 &\quad\mathcal D&=-\tfrac83(\zeta_1-\zeta_2).
\end{aligned}
\end{equation}
Moreover, \(\det(P_{\avec}|_{W_{\mathbf1}})=N^4N_3^2\det\nolimits_{\mathbb R}\mathcal S\).
\end{lemma}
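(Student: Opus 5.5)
The plan is to compute the Schur complement by composing the three pieces explicitly and then read off the determinant from the standard block-determinant identity. For the determinant statement: since \(NN_3\ne0\), \(P_{AA}\) is invertible by \eqref{eq:fixed-sector-aa-inverse}. The block factorization
\[
\begin{pmatrix}P_{AA}&P_{A0}\\ P_{0A}&P_{00}\end{pmatrix}
=\begin{pmatrix}I&0\\ P_{0A}P_{AA}^{-1}&I\end{pmatrix}
\begin{pmatrix}P_{AA}&P_{A0}\\ 0&\mathcal S\end{pmatrix}
\]
then gives \(\det(\Phi P_\avec\Phi^{-1})=\det P_{AA}\det_{\mathbb R}\mathcal S=N^4N_3^2\det_{\mathbb R}\mathcal S\). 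Conjugation by the fixed isomorphism \(\Phi\) does not change the determinant; the nonorthonormal metric \(G\) is irrelevant for a similarity. So this part is immediate.

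For the formula for \(\mathcal S\), I would work with \(\eta\) throughout and use Lemma~\ref{lem:fixed-sector-h}.

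\emph{Step 1.} Compute \(\boldsymbol\vartheta:=P_{A0}\eta\), with components \(\vartheta_i=-4\bar z\bar\eta z\hat\rho_i\). Writing \(\beta=\bar z\bar\eta z\), this is \(\boldsymbol\vartheta=-4\,\bar{\mathbf r}\,\beta\)-type data: entrywise \(\vartheta_i=-4\beta\hat\rho_i\).

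\emph{Step 2.} Apply \(P_{AA}^{-1}=-\Pi_{U_0}R_{\hat q^{-1}}-\Pi_{U_3}R_{(3+\hat q)^{-1}}\). Right multiplications commute with the left-linear projections \(\mathbf r\mathbf r^*/3\). So I need \(\mathbf r^*\boldsymbol\vartheta'\) for \(\vartheta'_i=\beta\hat\rho_i c\), where \(c\in\{\hat q^{-1},(3+\hat q)^{-1}\}\), that is, \(\sum_i\overline{\hat\rho_i}\beta\hat\rho_i c\). This is evaluated by the sums \eqref{eq:quaternion-sums}: \(\sum_i\hat\rho_i p\hat\rho_i=-p-2\bar p\), so \(\sum_i\overline{\hat\rho_i}\beta\hat\rho_i=\beta+2\bar\beta\).

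\emph{Step 3.} Apply \(P_{0A}\boldsymbol\vartheta=z\bigl(\sum_j\overline{\vartheta_j\hat\rho_j}\bigr)\bar z=z\bigl(\sum_j\overline{\hat\rho_j}\,\bar\vartheta_j\bigr)\bar z\). After conjugation, each term again reduces to a sum of the form \(\sum_j\overline{\hat\rho_j}X\hat\rho_j\) or \(\sum_j\overline{\hat\rho_j}\hat\rho_jX\)-type expressions, which collapse by \eqref{eq:quaternion-sums}. The \(U_0\) part is the identity minus the \(U_3\) part, so it yields terms in \(\hat q^{-1}\) both with and without the rank-one correction. This produces exactly the combinations \(\zeta_1-\zeta_2\) and the \(\tfrac13\)-weighted mixture.

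\emph{Step 4.} Conjugate back through \(z(\cdot)\bar z\). Since \(z\hat q^{-1}\bar z=\zeta_1\), the coefficients appear as \(\zeta_1,\zeta_2\) and their conjugates. Finally, add \(P_{00}\eta=-4\Re\eta-\eta\hat q=-2\eta-2\bar\eta-\eta\hat q\), which accounts for the constants \(-2\) in \(\mathcal A,\mathcal C\) and the \(-\hat q\) in \(\mathcal B\).

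The main obstacle is bookkeeping in Step 3: tracking the order of quaternion factors and conjugations, so that each term lands in one of the four slots \(\mathcal A\eta,\ \eta\mathcal B,\ \mathcal C\bar\eta,\ \bar\eta\mathcal D\). I would organize the work by writing every intermediate quantity as \(X\beta Y\) or \(X\bar\beta Y\) and reducing inner sums with \eqref{eq:quaternion-sums} before expanding. As a check, I would verify the resulting real \(4\times4\) matrix against the example \(\mathcal S_*\) at \(\avec_*\) of Example~\ref{rem:new-spectrum-point}, and confirm it symbolically.
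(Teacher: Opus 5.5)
Your proposal is correct and is essentially the paper's own argument. Like the paper, you invert \(P_{AA}\) through the splitting \(U_3\oplus U_0\) of \eqref{eq:fixed-sector-aa-inverse}, collapse the quaternion sums with \eqref{eq:quaternion-sums}, conjugate back by \(z(\cdot)\bar z\) to produce \(\zeta_1,\zeta_2\), subtract the composite from \(P_{00}\eta=-2\eta-2\bar\eta-\eta\hat q\), and read off the determinant from the block factorization with \(\det P_{AA}=N^4N_3^2\).

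One bookkeeping correction for Step 3. The sums that actually arise there are \(\sum_j\overline{\hat\rho_j}\,X\,\overline{\hat\rho_j}=-X-2\bar X\), not \(\sum_j\overline{\hat\rho_j}X\hat\rho_j\), which has the opposite sign. Carrying this through does reproduce the stated \(\mathcal A,\mathcal B,\mathcal C,\mathcal D\) exactly.
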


\begin{proof}
By direct computation from \eqref{eq:fixed-sector-aa-inverse}, the blocks
\eqref{eq:fixed-sector-h-blocks} and the first sum of
\eqref{eq:quaternion-sums}, we obtain \(P_{AA}^{-1}h_{A0}\eta\).
We apply \(h_{0A}\) using \eqref{eq:fixed-sector-h-blocks} and the same
quaternion sum once more, and subtract from
\(P_{00}\eta=-2\eta-2\bar\eta-\eta\hat q\).
The resulting coefficients are those in
\eqref{eq:fixed-sector-schur-coefficients}.
The determinant identity is the standard Schur-complement formula
\cite[\S8, (9), pp.~106--107]{cohen-2000-quaternionic}, together with
\(\det P_{AA}=N^4N_3^2\) from \eqref{eq:fixed-sector-aa-inverse}.
\end{proof}

We now write the Schur map as a diagonal matrix plus a skew matrix.
Since \(\Im\mathcal C=-\Im\mathcal D\) in
\eqref{eq:fixed-sector-schur-coefficients}, the real matrix of
\(\mathcal S\eta=\mathcal A\eta+\eta\mathcal B+
\mathcal C\bar\eta+\bar\eta\mathcal D\) in the basis \((1,u,v,z)\) is
\[
  \begin{pmatrix}
    d_0&-\mathbf w^{\mathsf T}\\
    \mathbf w&d_1I_3+[\mathbf k]_\times
  \end{pmatrix},
\]
where\([\mathbf k]_\times\boldsymbol\eta=\mathbf k\times\boldsymbol\eta\) and
\begin{align*}
  d_0&=\Re(\mathcal A+\mathcal B+\mathcal C+\mathcal D),
  &\quad d_1&=\Re(\mathcal A+\mathcal B-\mathcal C-\mathcal D),\\
  \mathbf w&=\Im(\mathcal A+\mathcal B),
  &\quad \mathbf k&=\Im(\mathcal A-\mathcal B-2\mathcal C).
\end{align*}
With \(\Lambda=\diag(d_0,d_1,d_1,d_1)\) and the skew matrix \(\Theta\)
built from \(\mathbf w,\mathbf k\), the matrix above is \(\Lambda+\Theta\), so
Lemma~\ref{lem:diagonal-skew-determinant} of Appendix~\ref{app:su2-bases}
gives
\begin{equation}
\label{eq:fixed-sector-schur-determinant}
 \det\nolimits_{\mathbb R}\mathcal S
   =d_0d_1^3+d_1^2|\mathbf w|^2+d_0d_1|\mathbf k|^2
       +(\mathbf w\cdot\mathbf k)^2.
\end{equation}

Since \(z\hat q\bar z=a_0-q_\perp+a_3z\), the real parts of
\(\zeta_1,\zeta_2\) are \(a_0/N,(a_0+3)/N_3\), and their imaginary
parts are \(\mathbf m/N,\mathbf m/N_3\), respectively. Substitution gives, on \(NN_3\ne0\),
\begin{equation}
\label{eq:fixed-sector-trace-norm-data}
\begin{aligned}
 d_0&=-4-a_0+\frac{12(a_0+3)}{N_3},
 &\text{and}\quad d_1&=-a_0+\frac{32a_0}{3N}+\frac{4(a_0+3)}{3N_3},\\
 \mathbf w&=-\mathbf p+\frac4{N_3}\mathbf m,
 &\text{and}\quad\mathbf k&=\mathbf p+\nu\mathbf m,\\
 \nu&=\tfrac43\Bigl(\frac4N-\frac1{N_3}\Bigr).
\end{aligned}
\end{equation}
Since \(|\mathbf p|^2=|\mathbf m|^2=x+y\) and \(\mathbf p\cdot\mathbf m=t\),
we find
\[
\begin{aligned}
  |\mathbf w|^2&=(x+y)\left(1+\frac{16}{N_3^2}\right)-\frac{8t}{N_3},\\
  |\mathbf k|^2&=(x+y)(1+\nu^2)+2\nu t,\\
  \mathbf w\cdot\mathbf k
    &=-(x+y)+\left(\frac4{N_3}-\nu\right)t+\frac{4\nu(x+y)}{N_3}.
\end{aligned}
\]
These are rational functions of \((a_0,x,y)\).

\subsection{The polynomial matrix and the form of
\texorpdfstring{$Q_{14}$}{Q14}}

\begin{lemma}
\label{lem:fixed-sector-polynomial-matrix}
Let \(\mathbf p,\mathbf m\) be as above, let
\(D=\diag(3,1,1,1)\) and \(\Delta=\diag(1,N,N,N)\).
Define the following matrices in the basis \((1,u,v,z)\):
\begin{gather*}
\mathcal S_{\mathrm{pol}}=\begin{pmatrix}-4-a_0&\mathbf p^{\mathsf T}\\-\mathbf p&-a_0I+[\mathbf p]_\times\end{pmatrix},\\
B_N=\begin{pmatrix}0&0\\0&2a_0I+[\mathbf m]_\times\end{pmatrix},\\
\Upsilon=\begin{pmatrix}a_0+3&-\mathbf m^{\mathsf T}\\ \mathbf m&(a_0+3)I-[\mathbf m]_\times\end{pmatrix},
\end{gather*}
where \(\mathcal S_{\mathrm{pol}}\) is the matrix of \(P_{00}\) and
\(\Upsilon\) is the matrix of right multiplication by
\(a_0+3+\mathbf m=\overline{3+z\hat q\bar z}\). Then, on \(NN_3\ne0\),
\begin{equation}
\label{eq:fixed-sector-pole-decomposition}
\begin{gathered}
\mathcal S=\mathcal S_{\mathrm{pol}}+\frac{16}{3N}\,B_N+\frac4{3N_3}\,D\Upsilon D,\\
\Upsilon\Upsilon^{\mathsf T}=N_3I\qquad\text{and}\qquad\det\Upsilon=N_3^2.
\end{gathered}
\end{equation}
Moreover the matrix
\begin{equation}
\label{eq:fixed-sector-matrix}
\begin{gathered}
M_{\mathrm{pol}}:=\Bigl(\Delta\mathcal S_{\mathrm{pol}}+\tfrac{16}3B_N\Bigr)\widetilde\Upsilon+\tfrac43\diag(9,N,N,N),\\
\widetilde\Upsilon=D^{-1}\Upsilon^{\mathsf T}D
\end{gathered}
\end{equation}
has entries in \(\mathbb Q[a_0,a_1,a_2,a_3]\), and
\(M_{\mathrm{pol}}=\Delta\,\mathcal S\,D^{-1}\Upsilon^{\mathsf T}D\) on \(NN_3\ne0\).
\end{lemma}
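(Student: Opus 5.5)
I would start from the Schur coefficients of Lemma~\ref{lem:fixed-sector-schur} and their real-matrix data \eqref{eq:fixed-sector-trace-norm-data}, and separate the terms of $\mathcal S$ according to which pole they carry: no denominator, a $1/N$ pole, or a $1/N_3$ pole. Writing $\zeta_1=\overline{z\hat q\bar z}/N$ and $\zeta_2=\overline{3+z\hat q\bar z}/N_3$, each coefficient $\mathcal A,\mathcal B,\mathcal C,\mathcal D$ in \eqref{eq:fixed-sector-schur-coefficients} is a constant part plus $\tfrac1N$ times a linear expression in $z\hat q\bar z$ (or its conjugate) plus $\tfrac1{N_3}$ times a linear expression in $3+z\hat q\bar z$.

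First, the pole-free part is $P_{00}\eta=-4\Re\eta-\eta\hat q$. In the basis $(1,u,v,z)$ this is $\mathcal S_{\mathrm{pol}}$, which I would confirm directly from the matrix of $R_{\hat q}$. Next I would collect the $1/N$ terms. Using $z\hat q\bar z=a_0-q_\perp+a_3z$, so that $\zeta_1$ has real part $a_0/N$ and imaginary part $\mathbf m/N$, the contributions to $d_0$ cancel (\eqref{eq:fixed-sector-trace-norm-data} shows $d_0$ has no $1/N$ term), and $\mathbf w$ likewise has none. What remains is $\tfrac{32a_0}{3N}$ on $d_1$ and $\tfrac{16}{3N}\mathbf m$ in $\mathbf k$, which is exactly $\tfrac{16}{3N}B_N$. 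The $1/N_3$ terms give $d_0$ the entry $12(a_0+3)/N_3$, $d_1$ the entry $4(a_0+3)/(3N_3)$, $\mathbf w$ the entry $4\mathbf m/N_3$ and $\mathbf k$ the entry $-\tfrac{4}{3N_3}\mathbf m$. I would check that these entries are precisely $\tfrac4{3N_3}D\Upsilon D$ with $D=\diag(3,1,1,1)$: the corner becomes $9(a_0+3)$, the off-diagonal column becomes $3\mathbf m$, and the lower block is $(a_0+3)I-[\mathbf m]_\times$, with signs fixed by the skew convention of Lemma~\ref{lem:diagonal-skew-determinant}. This sign and weight bookkeeping is the main obstacle. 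I would first test it on the example matrix $\mathcal S_*$ as a sanity check.

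The identities $\Upsilon\Upsilon^{\mathsf T}=N_3I$ and $\det\Upsilon=N_3^2$ hold because $\Upsilon$ is the matrix of right multiplication by a quaternion of norm squared $(a_0+3)^2+|\mathbf m|^2=N_3$ in an orthonormal basis.

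Finally, I would multiply the decomposition on the left by $\Delta$ and on the right by $D^{-1}\Upsilon^{\mathsf T}D=\widetilde\Upsilon$. The term $\Delta\cdot\tfrac1N B_N$ equals $B_N$, since the first row and column of $B_N$ vanish and the remaining rows are scaled by $N$. The last term is
\[
\tfrac4{3N_3}\Delta D\Upsilon\Upsilon^{\mathsf T}D=\tfrac43\Delta D^2=\tfrac43\diag(9,N,N,N).
\]
This yields $M_{\mathrm{pol}}=\Delta\mathcal S D^{-1}\Upsilon^{\mathsf T}D$ on $NN_3\ne0$. Polynomiality of the entries is then evident: $\widetilde\Upsilon$ has entries linear in $\avec$, apart from the rational rescalings $3^{\pm1}$ in its first row and column, and $\Delta\mathcal S_{\mathrm{pol}}$ and $B_N$ are polynomial.
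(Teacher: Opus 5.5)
Your proposal is correct and follows essentially the same route as the paper. Both split the data $d_0,d_1,\mathbf w,\mathbf k$ into polynomial, $1/N$ and $1/N_3$ parts to obtain $\mathcal S_{\mathrm{pol}}+\frac{16}{3N}B_N+\frac{4}{3N_3}D\Upsilon D$, and both then clear denominators using $D\Upsilon\Upsilon^{\mathsf T}D=N_3D^2$ and $\Delta B_N=NB_N$. The one small difference is how you get $\det\Upsilon=N_3^2$: you rely on the standard fact that right multiplication by a quaternion $p$ has real determinant $|p|^4$, with the sign being positive, whereas the paper settles the sign by a polynomial-domain argument evaluated at $\mathbf m=0$.
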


\begin{proof}
\emph{Split the poles.}
Split each entry of the data~\eqref{eq:fixed-sector-trace-norm-data}
into its polynomial part, its \(1/N\) part and its \(1/N_3\) part, using
\(\nu\mathbf m=\tfrac{16}{3N}\mathbf m-\tfrac4{3N_3}\mathbf m\). The
polynomial parts \(-4-a_0\), \(-a_0\), \(-\mathbf p\), \(\mathbf p\) of
\(d_0,d_1,\mathbf w,\mathbf k\) give \(\mathcal S_{\mathrm{pol}}\); only
\(d_1\) and \(\mathbf k\) have a \(1/N\) part, \(\tfrac{32a_0}{3N}\) and
\(\tfrac{16}{3N}\mathbf m\), which give \(\tfrac{16}{3N}B_N\); and the
\(1/N_3\) parts \(12(a_0+3)\), \(4\mathbf m\), \(\tfrac43(a_0+3)\),
\(-\tfrac43\mathbf m\) of \(d_0,\mathbf w,d_1,\mathbf k\), each over \(N_3\),
give \(\frac4{3N_3}D\Upsilon D\).

\emph{Evaluate the quaternionic factor.}
Write \(\Upsilon=(a_0+3)I+\Xi\) with
\(\Xi=\begin{pmatrix}0&-\mathbf m^{\mathsf T}\\ \mathbf m&-[\mathbf m]_\times\end{pmatrix}\)
skew. Then \(\Xi^2=-|\mathbf m|^2I\), so
\(\Upsilon\Upsilon^{\mathsf T}=\bigl((a_0+3)^2+x+y\bigr)I=N_3I\). Hence
\((\det\Upsilon)^2=N_3^4\), and \(\det\Upsilon=N_3^2\), because
\(\det\Upsilon-N_3^2\) and \(\det\Upsilon+N_3^2\) cannot both be nonzero in
the domain \(\mathbb Q[a_0,a_1,a_2,a_3]\) and the value at \(\mathbf m=0\) is
\((a_0+3)^4=N_3^2\).

\emph{Clear the denominators.}
By the pole decomposition,
\(\mathcal SD^{-1}\Upsilon^{\mathsf T}D
=\mathcal S_{\mathrm{pol}}\widetilde\Upsilon
+\tfrac{16}{3N}B_N\widetilde\Upsilon
+\tfrac4{3N_3}D\Upsilon\Upsilon^{\mathsf T}D\), and
\(D\Upsilon\Upsilon^{\mathsf T}D=N_3D^2\). The first row of \(B_N\)
vanishes, so \(\Delta B_N=NB_N\). Multiplying by \(\Delta\) on
the left gives \eqref{eq:fixed-sector-matrix}.
\end{proof}

\begin{proposition}
\label{prop:fixed-sector-q14-determinant}
Let \(M_{\mathrm{pol}}\) be the polynomial matrix of
Lemma~\ref{lem:fixed-sector-polynomial-matrix}, and define
\(Q_{14}:=\det M_{\mathrm{pol}}\in\mathbb Q[a_0,a_1,a_2,a_3]\).
Then, as identities of polynomials:
\begin{enumerate}
\item \(\det\bigl(P_{\avec}|_{W_{\mathbf1}}\bigr)=N\,\det M_{\mathrm{pol}}=N\,Q_{14}\);
\item on \(NN_3\neq0\),
\begin{equation}
\label{eq:fixed-sector-q14-closed-form}
 Q_{14}=N^3N_3^2
 \bigl(d_0d_1^3+d_1^2|\mathbf w|^2+d_0d_1|\mathbf k|^2
                 +(\mathbf w\cdot\mathbf k)^2\bigr),
\end{equation}
with the data of~\eqref{eq:fixed-sector-trace-norm-data}.
\end{enumerate}
\end{proposition}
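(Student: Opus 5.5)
The argument first proves both identities on the dense open set $NN_3\ne0$ and then extends the determinant identity everywhere by polynomial continuation.

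\emph{Step 1: the identity on $NN_3\ne0$.} By Lemma~\ref{lem:fixed-sector-schur}, $\det(P_\avec|_{W_{\mathbf1}})=N^4N_3^2\det_{\mathbb R}\mathcal S$. The change of coordinates $\Phi$ has a constant real determinant, so determinants computed in the $\Phi$-coordinates agree with the intrinsic ones (conjugation does not change them). Lemma~\ref{lem:fixed-sector-polynomial-matrix} gives $M_{\mathrm{pol}}=\Delta\mathcal S D^{-1}\Upsilon^{\mathsf T}D$, where $\det\Delta=N^3$ and $\det(D^{-1}\Upsilon^{\mathsf T}D)=\det\Upsilon=N_3^2$. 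Hence
\[
\det M_{\mathrm{pol}}=N^3N_3^2\det\nolimits_{\mathbb R}\mathcal S.
\]
Combining this with \eqref{eq:fixed-sector-schur-determinant} gives part~2, the closed form \eqref{eq:fixed-sector-q14-closed-form}. Comparing with Lemma~\ref{lem:fixed-sector-schur} yields
\[
\det(P_\avec|_{W_{\mathbf1}})=N^4N_3^2\det\nolimits_{\mathbb R}\mathcal S=N\det M_{\mathrm{pol}}
\]
on $NN_3\ne0$.

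\emph{Step 2: continuation.} Both $\det(P_\avec|_{W_{\mathbf1}})$ and $N\det M_{\mathrm{pol}}$ are polynomials in $(a_0,a_1,a_2,a_3)$. The first is polynomial because $P_\avec$ is affine in $\avec$; the second has entries in $\mathbb Q[a_0,\dots,a_3]$ by Lemma~\ref{lem:fixed-sector-polynomial-matrix}. The set $\{NN_3\ne0\}$ is the complement of a proper algebraic subset of $\mathbb R^4$, since $N$ and $N_3$ are nonzero polynomials; it is therefore Zariski dense. Two polynomials that agree there agree identically, which proves part~1 globally, including at $NN_3=0$.

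\emph{Main obstacle.} The substantive content lies in the inputs already established: the Schur-complement coefficients \eqref{eq:fixed-sector-schur-coefficients} and the pole decomposition \eqref{eq:fixed-sector-pole-decomposition}. The one point to check carefully in this proof is the determinant bookkeeping. We need $\det\Upsilon=N_3^2$ exactly, not $\pm N_3^2$, and this is supplied by the lemma. We also need the sign of $\det\Delta=N^3$, which is immediate. Finally, the block Schur formula is valid over $\mathbb R$ with $P_{AA}$ invertible, which holds exactly when $NN_3\ne0$ because $\det P_{AA}=N^4N_3^2$.
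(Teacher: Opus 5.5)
Your proposal is correct and follows the paper's proof essentially step for step. You get $\det M_{\mathrm{pol}}=N^3N_3^2\det\nolimits_{\mathbb R}\mathcal S$ on $NN_3\ne0$ from $\det\Delta=N^3$ and $\det\Upsilon=N_3^2$. You combine this with \eqref{eq:fixed-sector-schur-determinant} and Lemma~\ref{lem:fixed-sector-schur} to get both items there, and then extend item~1 by density of $\{NN_3\ne0\}$, exactly as the paper does. Your extra checks are accurate bookkeeping that the paper leaves implicit: conjugation invariance under $\Phi$, the exact sign of $\det\Upsilon$, and invertibility of $P_{AA}$ for the Schur step.
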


\begin{proof}
Since \(\det\Delta=N^3\) and
\(\det(D^{-1}\Upsilon^{\mathsf T}D)=\det\Upsilon=N_3^2\)
(Lemma~\ref{lem:fixed-sector-polynomial-matrix}), we have
\(\det M_{\mathrm{pol}}=N^3N_3^2\det\mathcal S\) on \(NN_3\ne0\); with
\eqref{eq:fixed-sector-schur-determinant}, this is
\eqref{eq:fixed-sector-q14-closed-form}, which is item~2.
By Lemma~\ref{lem:fixed-sector-schur},
\begin{equation*}
\det\bigl(P_{\avec}|_{W_{\mathbf1}}\bigr)=N^4N_3^2\det\mathcal S
=N\det M_{\mathrm{pol}} \qquad \text{on} \qquad NN_3\ne0.
\end{equation*}
Since both sides are polynomials, and $NN_3\ne0$ is dense, item 1 holds identically.
\end{proof}

\bibliographystyle{habbrv}
\bibliography{references}

\end{document}